\documentclass{amsart}

\usepackage[english]{babel}

\usepackage[letterpaper,top=2cm,bottom=2cm,left=3cm,right=3cm,marginparwidth=1.75cm]{geometry}

\usepackage{enumitem}
\usepackage{amssymb}
\usepackage{amsmath}
\usepackage{amsthm}
\usepackage{graphicx}
\usepackage{physics}
\usepackage{tikz-cd}
\usepackage{mathtools}
\usepackage[all]{xy}
\usepackage{xcolor}
\usepackage{booktabs}
\usepackage{tabularx}
\usepackage{stmaryrd}
\usepackage{appendix}

\usepackage{thmtools}
\usepackage[colorlinks=true, allcolors=blue]{hyperref}
\usepackage[nameinlink]{cleveref}

\theoremstyle{plain}
\newtheorem{theorem}{Theorem}[section]

\declaretheorem[sibling=theorem,name=Lemma]{lemma}
\declaretheorem[sibling=theorem,name=Proposition]{proposition}
\declaretheorem[sibling=theorem,name=Conjecture]{conjecture}
\theoremstyle{definition}
\declaretheorem[sibling=theorem,name=Definition]{definition}
\declaretheorem[sibling=theorem,name=Remark]{remark}

\newtheorem*{assumption}{Assumption}

\newcounter{assumptionref}
\newcommand{\assumptionlabel}[2]{%
  \renewcommand{\theassumptionref}{#2}%
  \refstepcounter{assumptionref}%
  \label{#1}%
}
\crefalias{assumptionref}{assumption}
\crefname{theorem}{Theorem}{Theorems}
\crefname{lemma}{Lemma}{Lemmas}
\crefname{proposition}{Proposition}{Propositions}
\crefname{corollary}{Corollary}{Corollaries}
\crefname{conjecture}{conjecture}{conjectures}
\crefname{conjecture}{Conjecture}{Conjectures}
\crefname{assumption}{assumption}{assumptions}
\crefname{assumption}{Assumption}{Assumptions}

\DeclareMathOperator{\Hom}{Hom}

\newcommand{\cD}{\mathcal D}
\newcommand{\Db}{D^b}
\newcommand{\cP}{\mathcal P}

\newcommand{\Ext}{\operatorname{Ext}}
\newcommand{\cExt}{\operatorname{\mathcal{E}xt}}
\newcommand{\cHom}{\operatorname{\mathcal{H}om}}

\newcommand{\C}{\mathbb C}
\newcommand{\PP}{\mathbb P}
\newcommand{\bbA}{\mathbb A}
\newcommand{\bbF}{\mathbb F}
\newcommand{\R}{\mathbb R}
\newcommand{\Z}{\mathbb Z}
\newcommand{\Coh}{\operatorname{Coh}}

\newcommand{\B}{\mathcal B}
\newcommand{\cA}{\mathcal A}
\newcommand{\cF}{\mathcal F}
\newcommand{\cG}{\mathcal G}
\newcommand{\cK}{\mathcal K}

\newcommand{\cO}{\mathcal O}
\newcommand{\cI}{\mathcal I}
\newcommand{\cL}{\mathcal L}

\newcommand{\fm}{\mathfrak{m}}

\newcommand{\id}{\mathrm{id}}

\newcommand{\ch}{\operatorname{ch}}

\newcommand{\im}{\operatorname{im}}
\newcommand{\Sym}{\operatorname{Sym}}

\newcommand{\ii}{\mathbf i}

\newcommand{\Pic}{\mathrm{Pic}}
\newcommand{\Perf}{\mathrm{Perf}}

\title{The Kawatani--Okawa conjecture in dimensions three and four}

\author[D. Wu]{Dongjian Wu}
\address{D. Wu: Shanghai Institute for Mathematics and Interdisciplinary Sciences (SIMIS), Shanghai 200433, China \& Research Institute of Intelligent Complex Systems, Fudan University, Shanghai 200433, China}
\email{wdj@simis.cn}

\author[N. Zhang]{Nantao Zhang}
\address{N. Zhang: School of Mathematics, Sun Yat-sen University, Guangzhou 510275, China}
\email{zhangnt@mail.sysu.edu.cn}

\subjclass[2020]{Primary 14F08, 14E30; Secondary 14J30, 14J35}
\keywords{Kawatani--Okawa conjecture, semiorthogonal decompositions, derived categories, extremal contractions, conormal sheaves, birational geometry}

\begin{document}
\date{}
\begin{abstract}
    We prove that a smooth projective threefold or fourfold whose
    bounded derived category admits no nontrivial semiorthogonal
    decomposition has nef canonical divisor, confirming the
    Kawatani--Okawa conjecture in these dimensions. The proof combines
    the classification of extremal contractions with constructions of
    admissible subcategories. In dimension four, we refine the
    descriptions of isolated surface fibers and construct exceptional
    objects supported on certain fibers.
\end{abstract}

\maketitle

\section{Introduction}

The minimal model program suggests that semiorthogonal decompositions
of derived categories should reflect the birational geometry of
varieties. The Kawatani--Okawa conjecture \cite[Conjecture 1.1]{KO2018}   asks whether a smooth
projective variety whose derived category admits no nontrivial
semiorthogonal decomposition must have a nef canonical divisor.
In this paper, we study this question in dimensions three and four,
with particular attention to extremal contractions and, in dimension
four, their isolated surface fibers.
\subsection{Background}

Let \(X\) be a smooth projective variety over \(\C\), and let \(\Db(X)\) denote its bounded derived category of coherent sheaves. The relationship between semiorthogonal decompositions of \(\Db(X)\) and birational geometry is firstly studied in \cite{BO1995,Kawamata2002}. A basic example is the blow-up of a smooth variety along a smooth center. The blow-up formula expresses its derived category in terms of the derived categories of the original variety and the center \cite{Orlov1992}. More generally, the minimal model program is expected to give fully faithful functors and semiorthogonal decompositions governed by the canonical divisor \cite{Kawamata2002}.

The absence of semiorthogonal decompositions is also closely related to the canonical bundle. For a smooth projective curve of positive genus, \(\Db(X)\) admits no nontrivial semiorthogonal decomposition \cite{Okawa2011}. In higher dimensions, sections of the canonical bundle constrain the supports of objects in a semiorthogonal decomposition \cite{KO2018}. In particular, if the canonical base locus is finite, possibly empty, then \(\Db(X)\) admits no nontrivial semiorthogonal decomposition \cite[Corollary 1.5]{KO2018}. This criterion is refined in \cite{Lin2026} by considering the paracanonical base locus
\[\operatorname{PBs}|\omega_X|
=\bigcap_{L\in\Pic^0(X)}\operatorname{Bs}|\omega_X\otimes L|.
\]
If this locus is finite, possibly empty, then \(\Db(X)\) admits no nontrivial semiorthogonal decomposition. Among the applications are the symmetric products \(C^{(i)}\) of a smooth projective curve of genus \(g\geq2\), for \(1\leq i\leq g-1\), and certain minimal surfaces \cite{Lin2026}. The paracanonical base locus is identified with the relative base locus of the canonical bundle with respect to the Albanese morphism in \cite{Caucci2024}. Another indecomposability criterion is given in \cite{Pirozhkov2023}, which proves that varieties with finite Albanese morphism are stably semiorthogonally indecomposable. In particular, their derived categories admit no nontrivial semiorthogonal decomposition.

These results give geometric conditions under which semiorthogonal decompositions are trivial. A complementary question asks whether the absence of a nontrivial semiorthogonal decomposition forces the canonical divisor to be nef. This is the following conjecture of \cite[Conjecture 1.1]{KO2018}.

\begin{conjecture}
    \label{conj:KO}
    Let \(X\) be a smooth projective variety. If \(\Db(X)\) admits no nontrivial semiorthogonal decomposition, then \(K_X\) is nef.
\end{conjecture}

\Cref{conj:KO} holds in dimension one. In dimension two, it follows from the classification of extremal contractions, together with the projective bundle and blow-up formulas \cite{KM1998,Orlov1992}. The converse is false even for surfaces. An Enriques surface has numerically trivial canonical divisor, while its structure sheaf is exceptional and generates a nontrivial admissible subcategory \cite[Theorem 1.9(1)]{KO2018}.

\subsection{Main results}

Our main result is the following.

\begin{theorem}
    \label{thm:main}
    Let \(X\) be a smooth projective variety of dimension three or four. If \(\Db(X)\) admits no nontrivial semiorthogonal decomposition, then \(K_X\) is nef.
\end{theorem}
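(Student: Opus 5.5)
We argue by contraposition: assuming \(K_X\) is not nef, we produce a nontrivial admissible subcategory of \(\Db(X)\). By the cone theorem there is a \(K_X\)-negative extremal ray \(R\) with contraction \(\varphi\colon X\to Y\). The strategy is case-by-case along the classification of such contractions, producing in each case either an exceptional object or an admissible subcategory that is nonzero and proper.

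First, the fiber-type cases \(\dim Y<\dim X\). A general fiber \(F\) is a smooth Fano variety. If \(\dim Y=0\), then \(X\) is Fano and \(\cO_X\) is exceptional, because \(H^i(\cO_X)=0\) for \(i>0\) by Kodaira vanishing. If \(\dim Y>0\), we want a relatively exceptional object or a relative exceptional collection. Here we would use the following observation: \(\varphi^*\colon \Db(Y)\to\Db(X)\) is fully faithful whenever \(R\varphi_*\cO_X=\cO_Y\), which holds by relative Kawamata--Viehweg vanishing. However, \(Y\) may be singular, so we instead work with the subcategory \(\varphi^*\Perf(Y)\). Admissibility would follow from the existence of both adjoints, which we deduce from \(\varphi\) being proper and Gorenstein-type duality (\(\varphi^!=\varphi^*\otimes\omega_\varphi\) on perfect complexes). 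The image is proper and nonzero because \(\cO_X(-K_X)\)-twists are not in it. When \(Y\) is singular, admissibility needs care. An alternative that avoids this is Kuznetsov's theorem: \(\varphi^*\Db(Y)\) is admissible when \(R\varphi_*\cO_X=\cO_Y\), in the form used for conic bundles and del Pezzo fibrations.

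Second, the birational divisorial cases. In dimension three, Mori's classification gives five types. Type E1 is the blow-up of a smooth curve in a smooth threefold, and Orlov's formula applies. Type E2 contracts an exceptional divisor \(E\cong\PP^2\) with normal bundle \(\cO(-1)\), and then \(\cO_E(-1)\) or \(\cO_E\) is exceptional. The remaining types E3--E5 contract a quadric or \(\PP^2\) with normal bundle \(\cO(-1)\) or \(\cO(-2)\) to a singular point, and in each we check directly that \(\cO_E\) or a suitable line bundle \(\cO_E(a)\) is exceptional. This uses \(\Ext^i(\cO_E(a),\cO_E(a))\) computed from the sequence \(0\to\cO_X(-E)\to\cO_X\to\cO_E\to0\), together with vanishing on the Fano variety \(E\) with \(-E|_E\) ample. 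In dimension four, divisorial contractions to points and to curves are handled the same way on the exceptional divisor, using the Andreatta--Wiśniewski and Takagi classifications. Divisor-to-surface contractions are generically blow-ups of smooth surfaces, and for these we look for an exceptional object supported on a general fiber \(\PP^1\)-family, or use the fact that \(\varphi^*\) is fully faithful with \(R\varphi_*\cO_X=\cO_Y\) as before.

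Third, the small contractions, which occur only in dimension four. By Kawamata, every exceptional locus component is a surface \(S\cong\PP^2\) with normal bundle \(\cO(-1)^{\oplus2}\); then \(\cO_S\) and the \(\cO_S(a)\) are spherical-like rather than exceptional. The key is to take \(\cO_S(-1)\): \(\Ext^\bullet(\cO_S(-1),\cO_S(-1))=\bigoplus\Lambda^i N\otimes\ldots\), and the cohomology of \(\Lambda^iN_{S}\) twisted appropriately vanishes for \(i=1,2\), giving an exceptional object. The main obstacle, and the point flagged in the abstract, is the isolated two-dimensional fibers of divisor-to-surface contractions and of fiber-type contractions onto threefolds. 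Here the fiber may be nonreduced or reducible, and its conormal sheaf must be described precisely to compute the self-Ext of a candidate object. The plan is to first classify these fibers by refining the Andreatta--Wiśniewski description, using conormal sheaf computations. We would then exhibit a line bundle on the reduced fiber, or on a suitable thickening, whose higher self-Exts vanish by the normal-bundle computations.
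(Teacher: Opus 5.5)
\textbf{The approach breaks down wherever the target is singular, and the isolated surface fibers are left as a plan that fails in concrete cases.}

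\emph{Singular target.} If \(Y\) is singular, \(L\varphi^*\Perf(Y)\) is \emph{not} admissible in \(\Db(X)\), whatever duality you use. Suppose the inclusion had a right adjoint \(R\). Then \(\Hom_Y(P,RF)\cong\Hom_X(L\varphi^*P,F)\cong\Hom_Y(P,R\varphi_*F)\) for all \(P\in\Perf(Y)\). Since perfect complexes compactly generate \(D_{\mathrm{qc}}(Y)\), the adjunction map \(RF\to R\varphi_*F\) would be an isomorphism, so \(R\varphi_*F\) would be perfect for every \(F\in\Db(X)\). Taking \(F=\cO_x\) with \(\varphi(x)=y\) singular would make \(\cO_y\) perfect, which is false. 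The Kuznetsov-type results you cite concern flat conic bundles and quadric fibrations. A flat morphism with smooth source has smooth target, so they say nothing where \(Y\) is singular.

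This is harmless for fiber-type contractions of threefolds, and for \(\dim Y\leq 2\) in dimension four, because the target is then smooth by the classification; you should cite that instead. In dimension four it matters. A divisor-to-surface contraction can have an ordinary double point of \(Y\) under an isolated \(\PP^2\)-fiber. At isolated surface fibers of contractions onto threefolds, smoothness of \(Y\) is not available a priori. For the divisor-to-surface case the paper replaces \(Y\) by the smooth surface \(S=f(E)\). The functor \(\Phi=Ri_*(Lg^*(-)\otimes\cO_E(E))\) is fully faithful on \(\Db(S)\), because \(Rf_*\cO_X(E)\cong\cO_Y\) (Grauert--Riemenschneider on the Gorenstein rational target) forces \(Rg_*\cO_E(E)=0\). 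Its image is a nonzero proper admissible subcategory.

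\emph{Isolated surface fibers.} You correctly flag these as the main obstacle, but leave them as a plan. Its main version, an exceptional line bundle on the reduced fiber, fails in concrete cases.
\begin{itemize}
\item For the \((3,2)\) fiber \(F\cong\PP^2\) above, \(N^\vee_{F/X}=(T_{\PP^2}(-1)\oplus\cO_{\PP^2}(1))/\cO_{\PP^2}\) gives \(H^1(F,N_{F/X})\cong\C\). Hence \(\Ext^2_X(\cO_F(a),\cO_F(a))\neq0\) for every \(a\).
\item For a C2 fiber \(E=A\cup_L B\), \(\Pic(E)\cong\Z\) is generated by \(\cO_X(H)|_E\), so every line bundle on \(E\) has the self-Ext of \(\cO_E\). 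The paper's description \(M\cong q^*T_{\PP^2}(-1)\) gives \(\Ext^2_X(\cO_E,\cO_E)\cong H^1(E,N)\cong H^2(E,\cO_E(-2))\cong\C\).
\end{itemize}
For C2 the paper uses a different idea. It proves that the scheme-theoretic fiber is reduced and \(\widehat{\cO}_{Y,P}\cong\C\llbracket t_1,t_2,t_3\rrbracket\). It then concludes globally: either a B0, C3 or C5 fiber supplies an exceptional object, or \(Y\) is smooth everywhere and \(Lf^*\) applies. In the C3 and C5 cases, exceptionality of \(\cO_E\) amounts to \(R\Gamma(E,N)=0\). That needs the whole conormal analysis: excluding negative splitting types on lines by deforming quotient curves in \(\operatorname{Bl}_E X\) against the isolation of the fiber. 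None of this is supplied.

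\emph{Two smaller corrections.}
\begin{itemize}
\item With \(N_{S/X}\cong\cO(-1)^{\oplus2}\), the sheaf \(\cO_S\) is already exceptional. Each \(\cO_S(a)\cong\cO_X(-aK_X)|_S\) has the same self-Ext, so nothing there is ``spherical-like''.
\item For a divisor contracted to a point in dimension four, no classification is needed. Relative Kawamata--Viehweg and Grauert--Riemenschneider vanishing, plus Serre duality on the Gorenstein divisor \(E\), give \(R\Gamma(\cO_E)\cong\C\) and \(R\Gamma(\cO_E(E))=0\), even when \(E\) is singular.
\end{itemize}
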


We argue by contraposition. If \(K_X\) is not nef, the cone and contraction theorems give an elementary \(K_X\)-negative contraction \(f:X\to Y\) \cite{KM1998}. We construct a nonzero proper admissible subcategory of \(\Db(X)\) for each type of contraction. When \(Y\) is smooth, the functor \(Lf^*\) is fully faithful and its image provides such a subcategory. If \(f\) is birational and its exceptional locus is an irreducible divisor contracted to a point, the structure sheaf of that divisor is exceptional by \Cref{lem:contra-point}. This assertion holds in arbitrary dimension.

In dimension three, these constructions suffice by the classification of extremal contractions \cite{Ando1985,AW1997}. In dimension four, we also use the descriptions of small and divisorial contractions in \cite{Kawamata1989,Takagi1999,AW1998}. For the divisorial contractions with singular target considered in \Cref{sec:fourfold}, we construct local categorical resolutions and use descent for semiorthogonal decompositions \cite{Kuz2008,AE2021}.

The main geometric part of the fourfold argument concerns contractions onto threefolds with isolated two-dimensional fibers. We refine the descriptions of these fibers in \cite{Kachi1997,AW1998}, concentrating on the types C2, C3, and C5 in \cite[Theorems 0.5--0.8]{Kachi1997}. The resulting statements are collected below. Here \(\cO_E\) is viewed as a sheaf on \(X\) by extension by zero.

\begin{theorem}
    \label{thm:fibers-introduction}
    Let \(f:X^4\to Y^3\) be an elementary \(K_X\)-negative contraction from a smooth fourfold, and let \(E=(f^{-1}(P))_{\mathrm{red}}\) be an isolated surface fiber. Put \(H=-K_X\).
    \begin{enumerate}[label=(\roman*)]
        \item If \(E=A\cup_L B\) is of type C2, where \(A\cong B\cong\PP^2\), \(L\) is a line on both planes, and \(\cO_A(H)\cong\cO_A(1)\), \(\cO_B(H)\cong\cO_B(1)\), then the scheme-theoretic fiber is reduced and \(Y\) is smooth at \(P\).
        \item If \(E\cong\PP(1,1,3)\) is of type C3, with \(\cO_E(H)\cong\cO_E(3)\), then \(\cO_E\) is exceptional. The scheme-theoretic fiber is reduced and \(Y\) is smooth at \(P\).
        \item If \(E=A\cup_L B\) is of type C5, where \(A\cong\PP^2\), \(B\cong\PP(1,1,2)\), and \(\cO_A(H)\cong\cO_A(1)\), \(\cO_B(H)\cong\cO_B(2)\), then \(\cO_E\) is exceptional.
    \end{enumerate}
\end{theorem}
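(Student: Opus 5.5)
The approach is local analysis near \(E\): I use the formal-function/Grauert argument for the fiber of \(f\) over \(P\), working with the conormal sheaf \(N^\vee_{E/X}\) (more precisely \(\cI_E/\cI_E^2\)) and the ample line bundle \(\cO_E(H)\). Let \(\cI\) be the ideal sheaf of \(E\). Because \(f\) is \(K_X\)-negative with relative dimension \(\le 2\), relative Kawamata--Viehweg vanishing gives \(R^if_*\cO_X=0\) for \(i>0\). It also gives vanishing \(H^i(E,\cO_E(-kH))=0\) for \(i<2\) and \(k>0\) in the relevant range, together with \(H^i(\cO_E)=0\) for \(i>0\), since \(E\) is a rational surface with rational singularities in each case. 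The key step is to compute \(\cI/\cI^2\) on \(E\) explicitly. The general outline (Andreatta--Wiśniewski) is that \(\cI/\cI^2\) is a quotient of \(f^*\fm_P/\fm_P^2\otimes\cO_E\) and sits in an exact sequence relating it to \(\cO_E(H)\). Using \(\det\) and adjunction together with the \(H\)-degrees on \(A,B\), I would pin it down as follows:
\begin{itemize}
\item type C2: restriction to each plane \(\cong \cO(1)^{\oplus 2}\), glued along \(L\);
\item type C3: \(\cI/\cI^2\cong \cO_E(1)\oplus\cO_E(3)\) in the sense of reflexive sheaves on \(\PP(1,1,3)\);
\item type C5: the analogous description, namely \(\cO_A(1)^{\oplus2}\) on the plane and \(\cO_B(1)\oplus\cO_B(2)\) on \(B\).
\end{itemize}

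For exceptionality in (ii) and (iii), I compute \(\Ext^i_X(\cO_E,\cO_E)\) through the local-to-global spectral sequence with \(\cExt^q(\cO_E,\cO_E)\). Where \(E\) is a local complete intersection, this equals \(\wedge^q N_{E/X}\). Near the singular points (the vertex of \(\PP(1,1,3)\), or the points of \(L\) and the vertex of \(B\)), \(E\) is not lci, so there I would instead use a local resolution of \(\cO_E\) coming from an explicit local equation model; Kachi's description gives toric/quotient local models, e.g. \(\C^4/\mu_3\)-type covers. The target is to show \(H^p(E,\cExt^q)=0\) for \((p,q)\neq(0,0)\). The determining input is that \(N_{E/X}\) has strictly negative summands \(\cO(-1),\cO(-3)\) or \(\cO(-1),\cO(-2)\), whose cohomology vanishes on these rational surfaces, and that \(\wedge^2 N\cong\omega_E\otimes\omega_X^{-1}|_E\) has vanishing cohomology because \(\omega_E\otimes\cO_E(H)\) is, by Serre duality, dual to \(\cO_E(-H)\), which has no \(H^0\) or \(H^1\) and has \(H^2=H^0(\omega_E(H))^\vee\). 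This last group must then be checked to vanish directly: \(\omega_{\PP(1,1,3)}=\cO(-5)\) and \(\cO(-5+3)=\cO(-2)\) has no sections, and the C5 case is analogous after gluing. Alternatively one can show \(\chi(\cO_E,\cO_E)=1\) by Riemann--Roch and vanishing of negative-degree terms.

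For reducedness and smoothness of \(Y\) in (i) and (ii), I compute \(\dim \fm_P/\fm_P^2\). Since \(f_*\cO_X=\cO_Y\), formal functions give \(\hat\cO_{Y,P}=\varprojlim H^0(\cO_X/\cI^k)\). Once the conormal bundle is known, \(H^0(\cI/\cI^2)\) has dimension \(3\): in C2 each plane contributes \(2\cdot 3\) sections with gluing constraints, and in C3 we get \(H^0(\cO(1))+H^0(\cO(3))\), which must total \(3\) after accounting for the correct twist. I then check that \(H^1(\cI^k/\cI^{k+1})=0\) for all \(k\) via \(\Sym^k\) of the conormal sheaf, so the graded pieces lift and \(\operatorname{gr}\hat\cO_{Y,P}\) is a polynomial ring in \(3\) variables. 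Hence \(Y\) is smooth at \(P\). The three generators then cut out \(E\) scheme-theoretically, which gives reducedness of the fiber.

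I expect the main obstacle to be the precise determination of the conormal sheaf at the non-lci points: the vertex of \(\PP(1,1,3)\), and the line \(L\) where \(A\) and \(B\) meet in C2 and C5. There I would work with Kachi's local analytic models and reflexive hulls. I would verify the needed cohomology vanishings for \(\Sym^k\) and for \(\cExt^q\) by passing to the normalization \(A\sqcup B\) and using the conductor sequence along \(L\).
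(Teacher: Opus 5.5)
\textbf{There is a genuine gap.} It sits exactly at the step you call key: the determination of \(M=\cI/\cI^2\). Your overall architecture is sound: the local-to-global Ext spectral sequence with \(\cExt^1=N\) and \(\cExt^2=\omega_E\otimes\omega_X^{-1}|_E\) for exceptionality, and formal functions for smoothness. But the conormal sheaves you write down are wrong. Since \(E\) is Cohen--Macaulay of codimension two, \(\det N\cong\omega_E\otimes\omega_X^{-1}|_E\) reflexively. Hence on a general anticanonical slice the conormal determinant has degree one on each component in C2 and C5, and degree two in C3.

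Your candidates violate this: \(\cO(1)^{\oplus2}\) on a plane, \(\cO_E(1)\oplus\cO_E(3)\) on \(\PP(1,1,3)\), and \(\cO_B(1)\oplus\cO_B(2)\) on \(\PP(1,1,2)\). The correct answers, which the paper reaches only at the end, are:
\begin{itemize}
\item C2: \(M\cong q^*T_{\PP^2}(-1)\);
\item C3: \(M^{\vee\vee}\cong\cO_E(1)^{\oplus2}\) with \(M^{\vee\vee}/M\cong\C_v\);
\item C5: \(M|_A\cong\cO_A\oplus\cI_{v/A}(1)\) and \(M|_B\cong\cO_B\oplus\cO_B(1)\).
\end{itemize}
Consequently your section counts cannot give \(3\); your sheaves give \(6+6-4=8\) in C2 and \(2+5=7\) in C3. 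Your exceptionality mechanism in C5 also fails. There \(N\) has trivial summands on both components, and \(R\Gamma(E,N)=0\) holds only because of the gluing sequence \(0\to N\to Q_A^\vee\oplus M_B^\vee\to\cO_L^{\oplus2}\to0\) of \Cref{lem:c5-van}. A smaller point: in C2 the surface is Gorenstein of codimension two, hence a local complete intersection everywhere, including along \(L\). In C5 the only non-lci point is the vertex \(v\).

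More fundamentally, determinants, \(H\)-degrees and adjunction cannot pin \(M\) down. On a plane with \(c_1=1\), both splitting types \((0,1)\) and \((-1,2)\) on lines are numerically allowed, realized for instance by \(\cO\oplus\cO(1)\), \(T_{\PP^2}(-1)\) and \(\cO(-1)\oplus\cO(2)\). Excluding the negative ones is the real content of the theorem. Your starting point, that \(M\) is a quotient of \(\fm_P/\fm_P^2\otimes\cO_E\), is by Nakayama equivalent to \(\fm_P\cO_X=\cI\). That is the reducedness you are trying to prove in (i) and (ii), and (iii) does not assert it. A priori the image \(W\) of \(\fm_P\to H^0(E,M)\) need not generate \(M\).

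What the paper supplies instead:
\begin{itemize}
\item Isolation of the surface fiber is used through Hilbert-scheme estimates to kill generic negative splittings: \Cref{lem:deform}, \Cref{lem:c2-pos}, and the analogous C3 and C5 arguments, such as deforming the strict transform \(\hat B\) in \(\operatorname{Bl}_AX\).
\item The Andreatta--Wi\'sniewski deformation argument on \(\operatorname{Bl}_EX\) or its small crepant resolution (\Cref{prop:non-neg}) excludes every quotient \(M|_l\twoheadrightarrow\cO_l(-1)\) on an \(H\)-line. It uses the degree-two Stein factor \(Z\to Y\) of an anticanonical slice, the class \(\bar r\), and \(f^*b\in\cI^2\).
\item Only after that do global generation and \Cref{lem:fiber} give \(\fm_P\cO_X=\cI\).
\end{itemize}
Even then, C2 needs the extra relation \(\xi^3=0\) on \(D=\PP_E(M)\), coming from \(T\to\operatorname{Bl}_PY\). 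This forces \(c_2=1\) on both planes, hence \(T_{\PP^2}(-1)\) rather than \(\cO\oplus\cO(1)\). In C3, your graded computation would need \(\cI^k/\cI^{k+1}\cong\Sym^kM\) justified at the non-lci vertex. The paper avoids this with a Koszul-complex argument on three lifted generators.
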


The assertions are proved in \Cref{thm:c2}, \Cref{thm:c3-exceptional}, \Cref{thm:c3-smooth}, and \Cref{thm:c5}. The proofs study the conormal sheaves of the reduced fibers, their restrictions to anticanonical slices, and the blow-ups along the fibers. The conormal descriptions give the required self-Ext vanishings in the C3 and C5 cases. They also determine the scheme-theoretic fibers and the local structure of the target in the C2 and C3 cases. Together with the remaining contraction types, these results prove \Cref{thm:main}. In addition, we also construct an explicit example of a C5 fiber in \Cref{app:c5}.

\subsection{Contents}

In \Cref{sec:background}, we recall the categorical and birational facts used in the proof. In \Cref{sec:threefold}, we treat the threefold case and prove the general lemma for divisors contracted to a point. In \Cref{sec:refinement}, we study the special fibers of types C2, C3, and C5. In \Cref{sec:fourfold}, we combine these results with the classification of extremal contractions to complete the fourfold argument. \Cref{app:c5} gives an example of a C5 fiber.

\subsection*{Conventions} Throughout the paper, the base field is \(\C\). All varieties are projective unless otherwise stated.
For a morphism \(f\), we write \(f_*\) and \(f^*\) for the underived functors on sheaves, and \(Rf_*\) and \(Lf^*\) for their total derived functors. We retain the symbols \(R\) and \(L\) for functors between derived categories, even when the corresponding functor on sheaves is exact. We denote higher direct-image sheaves by \(R^qf_*\) and use the usual notation \(f^*\) for pullbacks of functions and divisors.

A contraction is a projective surjective morphism with connected fibers onto a normal variety. We call it \emph{good} if its source is smooth and its anticanonical divisor is relatively ample, and \emph{crepant} if its canonical divisor is relatively numerically trivial. Statements near a fiber are understood after restricting to a sufficiently small neighborhood of its image.

For a closed subscheme \(Z\subset X\), we denote its ideal sheaf by \(\cI_{Z/X}\) and its normal sheaf by \(N_{Z/X}=\cHom_Z(\cI_{Z/X}/\cI_{Z/X}^2,\cO_Z)\). We use the terms normal bundle and conormal bundle when the embedding is regular. For a coherent sheaf \(\cF\) on an integral scheme, write \(\cF^{\mathrm{tf}}=\cF/\operatorname{tors}(\cF)\). The projectivization \(\PP_Z(\cF)\) parametrizes rank-one quotients, with tautological quotient line bundle \(\cO_{\PP_Z(\cF)}(1)\). On \(S_m=\PP(1,1,m)\), \(\cO_{S_m}(d)\) denotes the sheaf associated with the graded twist of degree \(d\).

\subsection*{Acknowledgements}

The authors would like to thank Will Donovan, Yuwei Fan, Xun Lin, Shizhuo Zhang for several helpful discussion.

\subsection*{AI disclosure}

The authors manually proved the conjecture in dimension three and developed a strategy for the dimension-four case. ChatGPT 6.0 was used to fill in the gaps. The authors reviewed the proof, corrected errors in the original version, and reorganized it into its current form. AI was also used to correct typos and polish the paper.

\section{Background on derived categories and birational geometry}
\label{sec:background}

In this section, we recall the definitions and basic properties of
semiorthogonal decompositions, admissible subcategories, and exceptional
objects, following \cite{BO1995,Huybrechts2006,KO2018}. We also recall
the relevant results on extremal contractions and categorical constructions
from \cite{KM1998,Orlov1992}.

\subsection{Derived categories and semiorthogonal decompositions}

For a variety \(X\), let \(\Db(X)=D^b(\Coh(X))\) be the bounded derived category of coherent sheaves on \(X\). We write
\[\Ext_X^k(F,G)=\Hom_{\Db(X)}(F,G[k]).\]
For smooth projective \(X\), Serre duality takes the form
\[\Hom_{\Db(X)}(F,G)\cong
\Hom_{\Db(X)}(G,F\otimes\omega_X[\dim X])^\vee;
\]
thus \(F\mapsto F\otimes\omega_X[\dim X]\) is the Serre functor \cite[Section 2]{BO1995}.

\begin{definition}
    A \emph{semiorthogonal decomposition} \(\Db(X)=\langle\cA,\B\rangle\) consists of full triangulated subcategories such that \(\Hom(B,A[k])=0\) for all \(A\in\cA\), \(B\in\B\), and \(k\in\Z\), and every \(F\in\Db(X)\) fits into a distinguished triangle
    \[B\longrightarrow F\longrightarrow A\longrightarrow B[1],
    \qquad A\in\cA,\quad B\in\B.
    \]
    It is \emph{nontrivial} if both components are nonzero. A full triangulated subcategory is \emph{admissible} if its inclusion has both a left and a right adjoint.
\end{definition}

For an admissible subcategory \(\cA\subset\Db(X)\), let
\begin{align*}
\cA^\perp&=\{F:\Hom(A,F[k])=0\text{ for all }A\in\cA,\ k\in\Z\},\\
{}^\perp\cA&=\{F:\Hom(F,A[k])=0\text{ for all }A\in\cA,\ k\in\Z\}.
\end{align*}
The adjunction triangles give
\[\Db(X)=\langle\cA^\perp,\cA\rangle
=\langle\cA,{}^\perp\cA\rangle.
\] Consequently, a nonzero proper admissible subcategory suffices to prove the existence of a nontrivial semiorthogonal decomposition. An object \(E\in\Db(X)\) is \emph{exceptional} if \(\Hom(E,E)=\C\) and \(\Ext_X^k(E,E)=0\) for every \(k\neq0\), or equivalently if \(R\Hom_X(E,E)\cong\C\).

\begin{proposition}[{\cite[Lemma 1.58]{Huybrechts2006}}]
    \label{prop:exceptional-background}
    Let \(X\) be a smooth projective variety of positive dimension. An exceptional object \(E\) generates a nonzero proper admissible subcategory \(\langle E\rangle\subset\Db(X)\). In particular, \(\Db(X)\) admits a nontrivial semiorthogonal decomposition.
\end{proposition}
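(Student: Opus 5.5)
The plan is the standard argument: for an exceptional object \(E\), the subcategory \(\langle E\rangle\) is equivalent to \(\Db(\mathrm{Vect}_\C)\), and its inclusion has adjoints given by explicit formulas using \(R\Hom\). So first I will show that \(\langle E\rangle\), the smallest full triangulated subcategory containing \(E\), consists exactly of finite direct sums \(\bigoplus_i V_i\otimes E[i]\) with \(V_i\) finite-dimensional vector spaces. Exceptionality gives \(\Hom(E[i],E[j])=\C\) if \(i=j\) and \(0\) otherwise. Hence every morphism between two such sums is a block-diagonal matrix of linear maps. Its cone is again of this form, since a linear map \(V\to W\) has cone \(\ker\oplus\operatorname{coker}[1]\), up to shift. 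Therefore the class is closed under cones, shifts and summands, and the functor \(\Db(\C)\to\Db(X)\), \(V^\bullet\mapsto V^\bullet\otimes E\), is fully faithful with essential image \(\langle E\rangle\).

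Next I will construct the adjoints. The right adjoint of the inclusion is
\[F\mapsto R\Hom(E,F)\otimes E.\]
Here \(R\Hom(E,F)\) has finite-dimensional total cohomology because \(X\) is smooth projective, so \(E\) is perfect and the Ext groups are finite and bounded. The adjunction isomorphism \(\Hom(V\otimes E,F)\cong\Hom(V,R\Hom(E,F))\cong\Hom(V\otimes E,R\Hom(E,F)\otimes E)\) uses \(R\Hom(E,E)=\C\). The left adjoint is
\[F\mapsto R\Hom(F,E)^\vee\otimes E.\]
Alternatively, I can obtain it from the right adjoint via the Serre functor \(S=-\otimes\omega_X[\dim X]\). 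Thus \(\langle E\rangle\) is admissible, and it is nonzero because \(E\neq0\).

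The only step needing care is properness, meaning \(\langle E\rangle\neq\Db(X)\). It suffices to find a nonzero object of \(\langle E\rangle^\perp\). Suppose instead that \(\langle E\rangle=\Db(X)\). Then every object is a sum of shifts of \(E\), so \(\Db(X)\) would have the Serre functor of \(\Db(\C)\), which is the identity. The Serre functor is unique, so \(S\cong\id\). But \(S\) shifts by \(\dim X>0\), and it sends \(\cO_x\) to \(\cO_x[\dim X]\) for a closed point \(x\). Since \(\cO_x\cong\bigoplus V_i\otimes E[i]\) with only finitely many nonzero \(V_i\), the isomorphism \(\cO_x\cong\cO_x[\dim X]\) is impossible by comparing cohomology sheaves. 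A more direct route gives the same contradiction: \(\Hom(\cO_x,\cO_x[\dim X])\) is nonzero by Serre duality, but in \(\Db(\C)\) nonzero Homs only occur in degree \(0\) between indecomposables. Here \(\cO_x\) is indecomposable, so it is a single \(E[i]\) with \(\Hom(E,E[\dim X])=0\), again a contradiction. This is where positive dimension is used.

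Finally, the semiorthogonal decomposition \(\Db(X)=\langle\langle E\rangle^\perp,\langle E\rangle\rangle\) from the adjunction triangles is nontrivial, since both components are nonzero.
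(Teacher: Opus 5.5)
Your proof is correct and is the standard argument behind the cited \cite[Lemma 1.58]{Huybrechts2006}; the paper itself gives no proof beyond that citation. You identify \(\langle E\rangle\) with \(\Db(\mathrm{Vect}_\C)\), write the adjoints as \(R\Hom(E,-)\otimes E\) and \(R\Hom(-,E)^\vee\otimes E\), and rule out \(\langle E\rangle=\Db(X)\) using \(\cO_x\), which has \(\Ext^{\dim X}(\cO_x,\cO_x)\neq0\). There is one harmless slip: the cone of a linear map \(V\to W\) is \(\operatorname{coker}\oplus\ker[1]\), not \(\ker\oplus\operatorname{coker}[1]\).
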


A closed immersion \(i:Z\hookrightarrow X\) is \emph{regular} of codimension \(r\) if its ideal is locally generated by a regular sequence of length \(r\). Its conormal bundle \(\cI_{Z/X}/\cI_{Z/X}^2\) is then locally free of rank \(r\), and its dual is \(N_{Z/X}\).

\begin{proposition}
    \label{prop:koszul-background}
    For a regular closed immersion \(i:Z\hookrightarrow X\) of codimension \(r\) into a smooth projective variety, there is a spectral sequence
    \[E_2^{p,q}=H^p(Z,\wedge^qN_{Z/X})
    \ \Longrightarrow\ \Ext_X^{p+q}(i_*\cO_Z,i_*\cO_Z).
    \]
    If \(H^0(Z,\cO_Z)=\C\) and all other terms on the \(E_2\)-page vanish, then \(i_*\cO_Z\) is exceptional.
\end{proposition}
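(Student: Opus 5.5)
The spectral sequence will come from the local-to-global Ext spectral sequence combined with the Koszul computation of the local Ext sheaves. The first step is to compute the sheaves \(\cExt^q_X(i_*\cO_Z,i_*\cO_Z)\). The question is local on \(X\). Near any point of \(Z\), the ideal \(\cI_{Z/X}\) is generated by a regular sequence \(s_1,\dots,s_r\), and the Koszul complex \(K^\bullet=\wedge^\bullet(\cO_X^{\oplus r})\) gives a locally free resolution of \(i_*\cO_Z\).

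Apply \(\cHom_X(-,i_*\cO_Z)\) to \(K^\bullet\). The differentials are multiplication by the \(s_j\), and these vanish on \(\cO_Z\), so all differentials of the resulting complex are zero. Hence
\[\cExt^q_X(i_*\cO_Z,i_*\cO_Z)\cong i_*\wedge^q\bigl(\cO_Z^{\oplus r}\bigr)^\vee .\]
Next we make this identification intrinsic. The map \(\cO_X^{\oplus r}\to\cI_{Z/X}\), \(e_j\mapsto s_j\), restricts to an isomorphism \(\cO_Z^{\oplus r}\cong\cI_{Z/X}/\cI_{Z/X}^2\). We identify \(\cExt^1\) with \(N_{Z/X}=\cHom(\cI/\cI^2,\cO_Z)\) canonically: apply \(\cHom(-,i_*\cO_Z)\) to \(0\to\cI\to\cO_X\to i_*\cO_Z\to0\) and note that \(\cHom(\cI,i_*\cO_Z)=\cHom(\cI/\cI^2,\cO_Z)\). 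The Yoneda product \(\wedge^q\cExt^1\to\cExt^q\) is then an isomorphism, since this can be checked locally using the Koszul model. This shows \(\cExt^q_X(i_*\cO_Z,i_*\cO_Z)\cong i_*\wedge^qN_{Z/X}\) canonically, so the local isomorphisms glue. Checking this canonicity is the only point requiring care. The alternative is to invoke the standard formula directly, but I expect the Yoneda argument to be cleanest.

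Now plug this into the local-to-global spectral sequence
\[E_2^{p,q}=H^p\bigl(X,\cExt^q_X(i_*\cO_Z,i_*\cO_Z)\bigr)\Rightarrow\Ext^{p+q}_X(i_*\cO_Z,i_*\cO_Z).\]
Since \(i\) is a closed immersion, \(H^p(X,i_*\wedge^qN_{Z/X})=H^p(Z,\wedge^qN_{Z/X})\), which gives the claimed \(E_2\)-page.

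For the exceptionality criterion, suppose every \(E_2^{p,q}\) with \((p,q)\neq(0,0)\) vanishes and \(E_2^{0,0}=H^0(Z,\cO_Z)=\C\). Then the sequence degenerates, so \(\Ext^k_X(i_*\cO_Z,i_*\cO_Z)=0\) for \(k\neq0\) and \(\Hom(i_*\cO_Z,i_*\cO_Z)=\C\). Therefore \(R\Hom_X(i_*\cO_Z,i_*\cO_Z)\cong\C\), which is the definition of exceptional. By \Cref{prop:exceptional-background}, \(i_*\cO_Z\) then generates a nonzero proper admissible subcategory.
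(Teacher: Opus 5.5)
Your proposal is correct and follows the same route as the paper: a local Koszul resolution computes \(\cExt^q_X(i_*\cO_Z,i_*\cO_Z)\cong i_*\wedge^q N_{Z/X}\), and the local-to-global spectral sequence then gives both the \(E_2\)-page and, under the stated vanishing, \(R\Hom_X(i_*\cO_Z,i_*\cO_Z)\cong\C\). Your extra step makes the identification canonical, by matching \(\cExt^1\) with \(N_{Z/X}\) and using the Yoneda product \(\wedge^q\cExt^1\to\cExt^q\), so that the local isomorphisms glue; the paper leaves this point implicit.
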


\begin{proof}
    Applying \(\cHom_X(-,i_*\cO_Z)\) to a local Koszul resolution gives
    \[\cExt_X^q(i_*\cO_Z,i_*\cO_Z)
    \cong i_*(\wedge^qN_{Z/X}),\qquad\text{ for } 0\leq q\leq r,
    \]
    with all other sheaf Ext groups zero. The local-to-global Ext spectral sequence then gives the assertion. Under the stated vanishing, its only surviving term is \(\C\) in degree zero.
\end{proof}

\subsection{Extremal contractions and categorical constructions}

A divisor \(D\) on \(X\) is \emph{nef} if \(D\cdot C\geq0\) for every integral curve \(C\subset X\). Let \(N_1(X)_\R\) be the real vector space of curve classes modulo numerical equivalence, and let \(\overline{\operatorname{NE}}(X)\) be the closed cone generated by effective curves. A ray \(\mathcal R\) of this cone is called \emph{extremal} if \(u+v\in\mathcal R\), with \(u,v\) in the cone, implies \(u,v\in\mathcal R\). It is \(K_X\)-negative if \(K_X\cdot u<0\) for every nonzero \(u\in\mathcal R\).

\begin{proposition}[{\cite[Theorem 3.7]{KM1998}}]
    \label{prop:contraction-background}
    If \(X\) is smooth projective and \(K_X\) is not nef, there is a \(K_X\)-negative extremal ray \(\mathcal R\) and a contraction \(f:X\to Y\) such that an integral curve is contracted precisely when its numerical class lies in \(\mathcal R\). Moreover, \(-K_X\) is \(f\)-ample and the relative Picard number is \(\rho(X/Y)=1\).
\end{proposition}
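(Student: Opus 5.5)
The plan is to derive this from the cone theorem and the contraction (basepoint-free) theorem for smooth projective varieties, which are the content of \cite[Theorem 3.7]{KM1998}. Since \(K_X\) is not nef, there is an integral curve \(C\) with \(K_X\cdot C<0\), so \(\overline{\operatorname{NE}}(X)\not\subset\{K_X\ge0\}\). The cone theorem gives
\[
\overline{\operatorname{NE}}(X)=\overline{\operatorname{NE}}(X)_{K_X\ge0}+\sum_j\R_{\ge0}[C_j],
\]
where the \(C_j\) are rational curves with \(0<-K_X\cdot C_j\le\dim X+1\), and the rays \(\R_{\ge0}[C_j]\) are discrete away from \(K_X^\perp\). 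Because the cone is not contained in the closed half-space \(\{K_X\ge0\}\), at least one such ray \(\mathcal R=\R_{\ge0}[C_j]\) exists. By construction it is extremal and \(K_X\)-negative.

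Next I would produce a supporting divisor for \(\mathcal R\). The cone theorem also provides a nef Cartier divisor \(D\) with \(D^\perp\cap\overline{\operatorname{NE}}(X)=\mathcal R\). This follows from the local polyhedrality of the cone in the \(K_X\)-negative half, via the rationality theorem. Replacing \(D\) by a large multiple, \(aD-K_X\) is ample for \(a\gg0\): it is positive on \(\overline{\operatorname{NE}}(X)\setminus\{0\}\), so Kleiman's criterion applies. The basepoint-free theorem then shows that \(|mD|\) is basepoint free for \(m\gg0\).

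Take \(f\) to be the Stein factorization of the morphism defined by \(|mD|\). Then \(f\) is a contraction onto a normal variety \(Y\) with \(D=f^*A\) for an ample \(A\). An integral curve \(C\) is contracted exactly when \(D\cdot C=0\), that is, exactly when \([C]\in\mathcal R\).

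The remaining claims follow formally. Relative ampleness of \(-K_X\) holds because \(-K_X\) is positive on \(\overline{\operatorname{NE}}(X/Y)=\mathcal R\setminus\{0\}\), and the relative Kleiman criterion applies. For \(\rho(X/Y)=1\), I would take a Cartier divisor \(L\) with \(L\cdot\mathcal R=0\). By the cone theorem, \(L\) is numerically \(f^*\) of a Cartier divisor on \(Y\): apply the basepoint-free theorem to \(L+aD\) and \(L+(a+1)D\) with \(a\gg0\), then take the difference. Hence \(N^1(X/Y)\) is one-dimensional.

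The main obstacle lies in the rationality and basepoint-free theorems, in particular the non-vanishing step, which uses Kawamata--Viehweg vanishing and Shokurov's argument. I would quote these from \cite{KM1998} rather than reprove them. Since the statement is exactly \cite[Theorem 3.7]{KM1998} specialized to smooth \(X\), I would simply cite it.
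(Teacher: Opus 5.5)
Your proposal is correct and takes the same route as the paper, which gives no argument of its own and simply invokes \cite[Theorem 3.7]{KM1998} (the cone and contraction theorems applied to the klt pair \((X,0)\)); your sketch of how the supporting nef divisor, Kleiman's criterion and the basepoint-free theorem produce \(f\) is the standard one. One small remark: \(\rho(X/Y)=1\) needs no basepoint-free argument, because every contracted curve has class in the single ray \(\mathcal R\), so a divisor is determined modulo numerical equivalence on contracted curves by one number; also, as you phrased it, the difference of \(L+(a+1)D\) and \(L+aD\) gives \(D\) rather than \(L\), and the combination you want is \(L=(a+1)(L+aD)-a(L+(a+1)D)\).
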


Here, the relative Picard number \(\rho(X/Y)\) is given by the dimension of the space of real Cartier divisor classes modulo numerical equivalence on contracted curves. A contraction with relative Picard number one is called \emph{elementary}. If \(\dim Y<\dim X\), it is of \emph{fiber type} and an elementary \(K_X\)-negative contraction of fiber type is a Mori fiber space. Otherwise it is birational, and is called \emph{divisorial} if its exceptional locus is a divisor and \emph{small} if that locus has codimension at least two. The exceptional locus \(\operatorname{Exc}(f)\) is the locus where \(f\) is not an isomorphism. These notions organize the cases considered below. When the target is smooth, the contraction itself provides the required admissible subcategory.

\begin{lemma}
    \label{lem:smooth}
    Let \(f:X\to Y\) be a nonisomorphic \(K_X\)-negative contraction between smooth projective varieties. Then \(Lf^*\Db(Y)\) is a nonzero proper admissible subcategory of \(\Db(X)\).
\end{lemma}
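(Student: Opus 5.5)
The argument has three steps: show that \(Lf^*\) is fully faithful, show that its image is admissible, and show that this image is proper. For the first step I will prove \(Rf_*\cO_X\cong\cO_Y\). Since \(f\) is a contraction, \(f_*\cO_X=\cO_Y\). Since \(-K_X\) is \(f\)-ample, relative Kawamata--Viehweg vanishing \cite{KM1998} applied to \(0 = K_X + (-K_X)\) gives \(R^qf_*\cO_X=0\) for \(q>0\). The projection formula then yields, for \(G\in\Db(Y)\),
\[Rf_*Lf^*G\cong G\otimes^L Rf_*\cO_X\cong G,\]
so the unit \(\id\to Rf_*Lf^*\) is an isomorphism. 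Because \(Y\) is smooth, every object of \(\Db(Y)\) is perfect, so \(Lf^*\) sends \(\Db(Y)\) into \(\Db(X)\). The isomorphism of the unit is then equivalent to \(Lf^*\) being fully faithful, with \(Rf_*\) as its right adjoint.

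For admissibility, the inclusion of the image of \(Lf^*\) has right adjoint induced by \(Rf_*\), which preserves \(\Db\) because \(f\) is proper. For the left adjoint I will use Grothendieck duality. Since \(X\) and \(Y\) are smooth, \(f^!(-)=Lf^*(-)\otimes\omega_{X/Y}[\dim X-\dim Y]\), and this preserves bounded complexes. Hence
\[Rf_*\bigl(-\otimes\omega_{X/Y}[\dim X-\dim Y]\bigr)\]
is a left adjoint of \(Lf^*\). Alternatively, any fully faithful functor between derived categories of smooth projective varieties has admissible image, since Serre functors exist on both sides. The image is nonzero because \(Lf^*\cO_Y=\cO_X\).

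The main point is properness. Suppose, for contradiction, that \(Lf^*\) were essentially surjective, so that it is an equivalence with inverse \(Rf_*\). I will derive a contradiction in each of the two cases.

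If \(f\) is of fiber type, take a point \(x\in X\). The skyscraper \(\cO_x\) would have to be isomorphic to \(Lf^*G\) for some \(G\). Then \(G\cong Rf_*\cO_x=\cO_{f(x)}\), and so \(\cO_x\cong Lf^*\cO_{f(x)}\). But the support of \(Lf^*\cO_{f(x)}\) is the whole fiber \(f^{-1}(f(x))\), which has positive dimension. This is a contradiction.

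If \(f\) is birational, the same argument applies at a point \(x\) of the exceptional locus. By Zariski's main theorem the fiber through \(x\) is positive-dimensional there, so again the support of \(Lf^*\cO_{f(x)}\) is strictly larger than \(\{x\}\).

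A uniform alternative is to use that \(f\) is not an isomorphism, so some curve \(C\) is contracted. Then \(\cO_X(-K_X)\) cannot lie in the image, because every object of the form \(Lf^*G\) has cohomology sheaves that are trivial on \(C\) up to the pullback of \(G\), while \(-K_X\cdot C>0\).

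I expect no serious obstacle here. The only points that need care are the vanishing \(R^qf_*\cO_X=0\) and the claim that \(Lf^*\) preserves boundedness, both of which rest on smoothness of \(Y\) and relative Kawamata--Viehweg vanishing.
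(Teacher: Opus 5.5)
Your proposal is correct and follows essentially the same route as the paper. Relative Kawamata--Viehweg vanishing and the projection formula give full faithfulness of \(Lf^*\), duality supplies the left adjoint, and a skyscraper \(\cO_x\) at a point of a positive-dimensional fiber shows properness. The only difference is cosmetic: the paper takes the cone of the counit \(Lf^*\cO_y\to\cO_x\) to exhibit an explicit nonzero object of the right orthogonal, whereas you show directly that \(\cO_x\) is not in the image, and these are equivalent.
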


\begin{proof}
    Relative Kawamata--Viehweg vanishing gives
    \(Rf_*\cO_X\cong\cO_Y.\) Hence,
    the projection formula shows that the unit \(F\to Rf_*Lf^*F\) is an isomorphism, so \(Lf^*\) is fully faithful. Its right adjoint is \(Rf_*\) and Serre duality supplies a left adjoint. Its image is therefore admissible and contains
    \(\cO_X\).

   Since \(f\) is not an isomorphism, Zariski's main theorem gives
    a positive-dimensional fiber \(f^{-1}(y)\). Choose
    \(x\in f^{-1}(y)\) and consider the counit triangle
    \[
    Lf^*\cO_y\longrightarrow\cO_x\longrightarrow C\longrightarrow Lf^*\cO_y[1].
    \]
    Then
    \[
    Rf_*C=0,\qquad
    \mathcal H^0(Lf^*\cO_y)=\cO_{f^{-1}(y)}\not\cong\cO_x,
    \]
    so \(C\neq0\). By adjunction,
    \[
    \Hom_X(Lf^*F,C[k])
    \cong\Hom_Y(F,Rf_*C[k])=0
    \]
    for all \(F\in\Db(Y)\) and \(k\in\Z\). Thus the image is proper.
\end{proof}

Let \(f:X=\operatorname{Bl}_Z Y\to Y\) be the blow-up of a smooth projective variety along a smooth center of codimension \(c\geq2\), and let \(i:E\hookrightarrow X\) and \(p:E\to Z\) be the exceptional divisor and its projection. Write \(\cO_E(-1)=\cO_E(E)\), and set \(\Phi_k(F)=Ri_*(Lp^*F\otimes\cO_E(k))\). Then, \cite{Orlov1992} gives
\[\Db(X)=\big\langle
\Phi_{1-c}\Db(Z),\ldots,\Phi_{-1}\Db(Z),Lf^*\Db(Y)
\big\rangle.
\]
For the blow-up of a surface at a point, the additional component is generated by the exceptional object \(i_*\cO_E(-1)\). In the following
sections, we construct admissible subcategories similarly by pullback or by
finding exceptional objects supported on the contracted loci for higher dimension varieties.

\section{Proof of the conjecture in dimension three}
\label{sec:threefold}

In this section, we treat the threefold case by studying
\(K_X\)-negative extremal contractions. We use pullback when the target
is smooth and show that the structure sheaf of the exceptional divisor
is exceptional when it is contracted to a point. 

Let \(X\) be a smooth projective threefold. If \(K_X\) is not nef, by \Cref{prop:contraction-background}, there are a \(K_X\)-negative extremal ray and a contraction
\(f: X \to Y\)
where \(-K_X\) is \(f\)-ample. By the classification of extremal contractions from smooth threefolds \cite{AW1997}, either the target is smooth or the contraction sends an irreducible divisor to a point. 

\begin{proposition}
    If \(X\to Y\) is a Mori fiber space or contracts the exceptional divisor to a curve, then \(\Db(X)\) admits a nontrivial semiorthogonal decomposition.
\end{proposition}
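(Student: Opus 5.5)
The plan is to reduce every case in the statement to \Cref{lem:smooth}: in each case I will show that the target \(Y\) is smooth and then take \(Lf^*\Db(Y)\) as the required subcategory. The needed inputs come from the classification of \(K_X\)-negative extremal contractions of smooth threefolds due to Mori, as recorded in \cite{AW1997,Ando1985}. We have \(\dim Y\in\{0,1,2\}\) in the fiber type case, and \(Y\) is a threefold when \(f\) is divisorial and contracts the exceptional divisor to a curve.

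\emph{Mori fiber spaces.} If \(\dim Y=0\), then \(X\) is a Fano threefold. Here I would not use \Cref{lem:smooth} at all. By Kodaira vanishing, \(H^k(X,\cO_X)=0\) for \(k>0\), so \(\cO_X\) is exceptional. Then \Cref{prop:exceptional-background} gives a nontrivial decomposition. (Alternatively, one can view \(Y=\operatorname{Spec}\C\) as smooth and apply \Cref{lem:smooth} directly, since \(Lf^*\Db(\mathrm{pt})=\langle\cO_X\rangle\).)

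If \(\dim Y=1\), then \(Y\) is a normal curve and hence smooth. If \(\dim Y=2\), Mori's classification says \(f\) is a conic bundle over a smooth surface. Smoothness of the base follows because an extremal contraction from a smooth threefold onto a surface is equidimensional, and then \(Y\) is smooth by Mori's theorem (cf.\ \cite{AW1997}). In both cases \(f\) is not an isomorphism and both varieties are smooth projective, so \Cref{lem:smooth} applies.

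\emph{Divisorial contractions to a curve.} By Mori (type E1, see \cite{AW1997}), \(Y\) is smooth, the image \(C=f(E)\) is a smooth curve, and \(f\) is the blow-up of \(Y\) along \(C\). Then \Cref{lem:smooth} applies directly. One could instead invoke Orlov's blow-up formula, which gives \(\Db(X)=\langle\Phi_{-1}\Db(C),Lf^*\Db(Y)\rangle\), a visibly nontrivial decomposition.

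The only nonformal step is to justify that \(Y\) is smooth in the conic bundle case and in the E1 case. I will cite the classification for this rather than reprove it. If a self-contained argument were wanted, the key point would be that \(f\) is equidimensional, respectively that every fiber over \(C\) is a smooth conic, so that \(f\) is flat and its smoothness follows from deformation theory of lines/conics. I expect that to be the main obstacle, but it is entirely covered by \cite{AW1997}.
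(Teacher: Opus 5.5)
Your proposal is correct and is essentially the paper's proof: the paper invokes the classification in \cite{AW1997} to get a smooth target for Mori fiber spaces and a smooth blow-up for contractions of a divisor to a curve, and then applies \Cref{lem:smooth}. Your Kodaira-vanishing argument that \(\cO_X\) is exceptional in the Fano case is a harmless variant, since it amounts to \Cref{lem:smooth} with \(Y\) a point.
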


\begin{proof}
    A Mori fiber space from a smooth projective threefold has a smooth target and contraction to a curve is a smooth blow-up \cite{AW1997}. The result therefore follows from \Cref{lem:smooth}.
\end{proof}

\begin{lemma}
\label{lem:deduce_exceptional}
    If a divisorial extremal contraction sends its exceptional divisor \(E\) to a point, then \(\cO_E\) is exceptional.
\end{lemma}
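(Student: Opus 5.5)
Since \(X\) is smooth and \(E\) is a Cartier divisor, \(i:E\hookrightarrow X\) is a regular immersion of codimension one with normal bundle \(N_{E/X}=\cO_E(E)\). I would apply \Cref{prop:koszul-background}. The \(E_2\)-page has only two rows, \(H^p(E,\cO_E)\) and \(H^p(E,\cO_E(E))\). Equivalently, I would use the triangle obtained from \(0\to\cO_X(-E)\to\cO_X\to\cO_E\to0\). So it suffices to prove
\[
H^p(E,\cO_E)=0\ (p>0),\qquad H^0(E,\cO_E)=\C,\qquad H^p(E,\cO_E(E))=0\ (p\ge0).
\]
Here \(X\) has arbitrary dimension \(n\), \(f:X\to Y\) is the contraction and \(P=f(E)\) is the point.

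\textbf{Step 1: \(\cO_E\) has the cohomology of a point.} By relative Kawamata--Viehweg vanishing applied to \(-K_X\), which is \(f\)-ample, we have \(R^qf_*\cO_X=0\) for \(q>0\). Since \(-E\) is \(f\)-ample (by the negativity lemma, \(E\cdot\ell<0\) for contracted curves \(\ell\), and \(\rho(X/Y)=1\)), the divisor \(-E-K_X\) is also \(f\)-ample. Hence \(R^qf_*\cO_X(-E)=0\) for \(q>0\). Pushing forward \(0\to\cO_X(-E)\to\cO_X\to\cO_E\to0\) gives \(R^qf_*\cO_E=0\) for \(q>0\). Since \(f|_E\) maps to a point, this says \(H^q(E,\cO_E)=0\) for \(q>0\). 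In degree \(0\), \(f_*\cO_E\) is the quotient \(\cO_Y/f_*\cO_X(-E)\). Moreover \(f_*\cO_X(-E)\) is the ideal \(\mathfrak m\) of functions vanishing on \(E\), which is the maximal ideal of \(P\) because \(Y\) is normal and \(E=f^{-1}(P)_{\mathrm{red}}\). Thus \(H^0(E,\cO_E)=\C\). (Alternatively, \(E\) is connected and reduced, and its \(H^0\) is a finite-dimensional local algebra that is reduced.)

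\textbf{Step 2: vanishing of \(H^p(E,\cO_E(E))\).} This is the step I expect to need care. I would write \(\cO_E(E)=\cO_E(K_E)\otimes\cO_E(-K_X)\) by adjunction, noting that \(E\) is Gorenstein. The idea is to get Kodaira-type vanishing on \(E\). Since \(-K_X|_E\) is ample (it is \(f\)-ample and \(E\) lies in a fiber), Serre duality on the Cohen--Macaulay Gorenstein scheme \(E\) gives
\[
H^p(E,\cO_E(E))\cong H^{n-1-p}(E,\cO_E(-E)\otimes\omega_E)^\vee=H^{n-1-p}(E,\cO_E(K_X))^\vee .
\]
Vanishing on the possibly singular \(E\) is awkward, so I would instead argue on \(X\). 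Push forward the sequence \(0\to\cO_X\to\cO_X(E)\to\cO_E(E)\to0\). Write \(E=K_X+(E-K_X)\), where \(E-K_X\) is not obviously ample. Instead, use \(0\to\cO_X(K_X)\to\cO_X(K_X+E)\to\omega_E\to0\) together with Grauert--Riemenschneider vanishing. The cleaner route is to compute directly: \(R^qf_*\cO_X(E)=R^qf_*\cO_X(K_X+(E-K_X))\), and \(E-K_X=-K_X-(-E)\) is a difference of \(f\)-ample divisors. Because \(\rho(X/Y)=1\), the class of \(E-K_X\) is a multiple \(c\cdot(-E)\) on contracted curves. If \(c>0\), it is \(f\)-ample and relative vanishing gives \(R^qf_*\cO_X(E)=0\) for \(q>0\). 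Using \(K_X\cdot\ell<0\), \(E\cdot\ell<0\), and \((E-K_X)\cdot\ell=E\cdot\ell-K_X\cdot\ell\), I would need \(-K_X\cdot\ell>-E\cdot\ell\), which is not automatic. So my fallback is the inductive sequences
\[
0\to\cO_X(kE)\to\cO_X((k+1)E)\to\cO_E((k+1)E)\to0 .
\]
By \(f\)-ampleness of \(kE-K_X\) for \(k\le0\), we get \(R^qf_*\cO_X(kE)=0\) for \(k\le 0\), \(q>0\). This gives \(H^q(E,\cO_E(kE))=0\) for \(k\le0\), \(q>0\). For \(k=1\) the same needs \(E-K_X\) to be \(f\)-ample. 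Otherwise I would use that \(R^{q}f_*\cO_X(E)\) for \(q\geq1\) is supported at \(P\), plus duality on \(X\) via \(\omega_X\), relating it to \(R^{n-1-q}f_*\cO_E(K_X)\)-type terms, which vanish by Step 1's method since \(-K_X\) restricted to \(E\) is ample.

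\textbf{Step 3: \(H^0(E,\cO_E(E))=0\).} This is immediate because \(\cO_E(E)\) has negative degree on every contracted curve, while \(E\) is covered by such curves. Then \(H^0(E,\cO_E(E))=0\): a nonzero section restricted to a curve through a point where it is nonzero would be a nonzero section of a negative-degree line bundle.
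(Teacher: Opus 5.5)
\textbf{Verdict: there is a genuine gap in Step 2.}

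Your Steps 1 and 3 are correct and match the paper. In Step 2 you also write down the same Serre-duality reduction the paper uses, \(H^p(E,\cO_E(E))^\vee\cong H^{n-1-p}(E,\omega_X|_E)\). The gap is that you never prove \(R\Gamma(E,\omega_X|_E)=0\).

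Your attempts through \(R^qf_*\cO_X(E)\) stall; as you note, \(E-K_X\) need not be \(f\)-ample. The inductive sequences only handle \(kE\) with \(k\le 0\). Your final fallback says the relevant terms ``vanish by Step 1's method since \(-K_X|_E\) is ample,'' and this does not work as stated. Step 1's method writes a line bundle as \(K_X+A\) with \(A\) \(f\)-ample. For \(\omega_X|_E\) you need \(R^qf_*\omega_X=0\), and there \(A=0\) is not \(f\)-ample. Ampleness of \(-K_X|_E\) only makes \(\omega_X|_E\) antiample, which kills \(H^0\) and nothing more. Kodaira-type vanishing on \(E\) itself is unavailable, as you observe, since \(E\) may be singular or even nonnormal. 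Your Step 2 also never uses that \(f\) is birational, and that hypothesis is exactly what is needed.

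\textbf{The missing ingredient} is Grauert--Riemenschneider vanishing: \(R^qf_*\omega_X=0\) for \(q>0\), because \(f\) is birational and \(X\) is smooth. You also need \(R^qf_*\omega_X(-E)=0\) for \(q>0\), which is relative Kawamata--Viehweg since \(-E\) is \(f\)-ample.

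With these, push forward \(0\to\omega_X(-E)\to\omega_X\to\omega_X|_E\to0\). For \(q\ge1\), the term \(R^qf_*(\omega_X|_E)\) sits between \(R^qf_*\omega_X\) and \(R^{q+1}f_*\omega_X(-E)\), both zero. Since \(E\) maps to a point, this gives \(H^q(E,\omega_X|_E)=0\) for \(q>0\). Antiampleness gives \(H^0(E,\omega_X|_E)=0\).

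Your duality then yields \(R\Gamma(E,\cO_E(E))=0\), with your Step 3 as the special case \(p=0\). The two-row local-to-global spectral sequence now gives \(R\Hom_X(\cO_E,\cO_E)\cong\C\). This is precisely the paper's argument.
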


\begin{proof}
    This is the three-dimensional instance of the following general lemma, \Cref{lem:contra-point}.
\end{proof}

The preceding statement is a special case of the following general lemma.
\begin{lemma}
    \label{lem:contra-point}
    Let \(X\) be a smooth projective variety, and let \(f:X\to Y\) be an elementary birational \(K_X\)-negative contraction whose irreducible exceptional divisor \(E\) maps to a point. Then \(\cO_E\) is exceptional.
\end{lemma}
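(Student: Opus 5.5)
The plan is to compute \(R\Hom_X(\cO_E,\cO_E)\) through the Koszul spectral sequence of \Cref{prop:koszul-background}, and then to obtain every required vanishing from relative vanishing theorems for \(f\). Here \(E\) is taken with its reduced structure. It is an integral Cartier divisor on the smooth variety \(X\), so \(i:E\hookrightarrow X\) is a regular immersion of codimension one with \(N_{E/X}=\cO_E(E)\). The \(E_2\)-page then has only two rows, \(H^p(E,\cO_E)\) and \(H^p(E,\cO_E(E))\). It therefore suffices to prove the following three facts:
\[
H^0(E,\cO_E)=\C,\qquad H^p(E,\cO_E)=0\ (p>0),\qquad H^p(E,\cO_E(E))=0\ (\text{all }p).
\]
The first holds because \(E\) is integral and proper. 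Since \(f(E)\) is a point \(P\), each group \(H^p(E,\mathcal G)\) for a sheaf \(\mathcal G\) on \(E\) equals the stalk of \(R^pf_*\mathcal G\) at \(P\). We may therefore work locally over a neighborhood of \(P\) and use relative vanishing.

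We first record the sign of \(E\) relative to \(f\). Every curve \(C\subset E\) is contracted, and for an exceptional divisor of a birational contraction one has \(E\cdot C<0\) for some such curve. Since \(\rho(X/Y)=1\), all contracted curves are numerically proportional, so \(-E\) is \(f\)-ample; by hypothesis \(-K_X\) is \(f\)-ample as well.

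For the row \(H^p(E,\cO_E)\), we apply \(Rf_*\) to \(0\to\cO_X(-E)\to\cO_X\to\cO_E\to0\). Relative Kawamata--Viehweg vanishing gives \(R^pf_*\cO_X=0\) for \(p>0\), because \(0-K_X\) is \(f\)-ample. We write \(-E=K_X+(-E-K_X)\), where \(-E-K_X\) is a sum of two \(f\)-ample classes and hence \(f\)-ample; the same theorem then gives \(R^{p}f_*\cO_X(-E)=0\) for \(p>0\). The long exact sequence yields \(R^pf_*\cO_E=0\) for \(p\ge1\).

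For the row \(H^p(E,\cO_E(E))\), the cleanest route is duality on \(E\). As a Cartier divisor in a smooth variety, \(E\) is Gorenstein with \(\omega_E=\cO_E(K_X+E)\). Serre duality therefore identifies \(H^p(E,\cO_E(E))\) with the dual of \(H^{n-1-p}(E,\cO_E(K_X))\), where \(n=\dim X\).

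Next we apply \(Rf_*\) to \(0\to\cO_X(K_X-E)\to\cO_X(K_X)\to\cO_E(K_X)\to0\). Grauert--Riemenschneider vanishing gives \(R^jf_*\omega_X=0\) for \(j>0\), since \(f\) is birational. Relative Kawamata--Viehweg applied to \(K_X+(-E)\) with \(-E\) \(f\)-ample gives \(R^jf_*\cO_X(K_X-E)=0\) for \(j>0\). Hence \(H^j(E,\cO_E(K_X))=0\) for all \(j\ge1\).

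The remaining group \(H^0(E,\cO_E(K_X))\) vanishes for a different reason. The restriction \(K_X|_E\) is anti-ample on the positive-dimensional integral projective scheme \(E\), and such a line bundle has no nonzero sections. This kills the \(p=n-1\) term of the row \(H^p(E,\cO_E(E))\).

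With both rows settled, the spectral sequence of \Cref{prop:koszul-background} has \(\C\) in degree zero as its only surviving term, so \(\cO_E\) is exceptional.

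The main obstacle is the row \(H^p(E,\cO_E(E))\). A direct approach through \(0\to\cO_X\to\cO_X(E)\to\cO_E(E)\to0\) fails, because \(E-K_X\) need not be \(f\)-ample. Passing to Serre duality on the Gorenstein divisor \(E\) avoids this problem. Once there, the two remaining points to check carefully are that \(E\) is Cohen--Macaulay, so that duality applies, and that \(E\) is integral, so that the top-degree term is killed by anti-ampleness of \(K_X|_E\).
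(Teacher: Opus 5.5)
Your proposal is correct and follows the paper's proof essentially step by step. You show that \(-E\) is \(f\)-ample from negativity together with \(\rho(X/Y)=1\). Relative Kawamata--Viehweg and Grauert--Riemenschneider, applied to \(\cO_X(-jE)\) and \(\omega_X(-jE)\) for \(j=0,1\), then give \(R\Gamma(E,\cO_E)\cong\C\) and the vanishing of \(H^{>0}(E,\omega_X|_E)\). Anti-ampleness kills \(H^0(E,\omega_X|_E)\), and Serre duality on the Gorenstein divisor \(E\) transfers this to \(\cO_E(E)\). Your phrasing of the last step through the two-row spectral sequence of \Cref{prop:koszul-background} is the same as the paper's Cartier self-intersection calculation.
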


\begin{proof}
    Put \(n=\dim X\) and \(P=f(E)\). Since \(f\) is elementary,
    the negativity lemma implies that \(-E\) is \(f\)-ample.
    Relative Kawamata--Viehweg vanishing and
    Grauert--Riemenschneider vanishing
    \cite[Corollary 2.68]{KM1998} give
    \[
    R^qf_*\cO_X(-jE)=R^qf_*\omega_X(-jE)=0,
    \qquad q>0,\quad j=0,1.
    \]
    Applying \(Rf_*\) to
    \[
    0\to\cO_X(-E)\to\cO_X\to\cO_E\to0,
    \qquad
    0\to\omega_X(-E)\to\omega_X\to\omega_X|_E\to0
    \]
    yields
    \[
    H^q(E,\cO_E)=H^q(E,\omega_X|_E)=0,
    \qquad q>0.
    \]
    Moreover, since \(E\) is integral and projective and
    \(\omega_X|_E\) is antiample, we obtain
    \(
    H^0(E,\cO_E)=\C,\) and \(H^0(E,\omega_X|_E)=0.
    \)
    Thus
    \(
    R\Gamma(E,\cO_E)\cong\C,\) and \(
    R\Gamma(E,\omega_X|_E)=0.
    \)
    The Cartier divisor \(E\) is Gorenstein. Adjunction and
    Serre duality give
    \[
    H^q(E,\cO_E(E))^\vee
    \cong H^{n-1-q}(E,\omega_E(-E))
    \cong H^{n-1-q}(E,\omega_X|_E)=0.
    \]
    The Cartier self-intersection calculation therefore gives
    \(
    R\Hom_X(\cO_E,\cO_E)\cong\C.
    \)
\end{proof}

\begin{remark}
    Together with \Cref{lem:smooth,lem:deduce_exceptional} gives
    \Cref{conj:KO} for surfaces. In dimension three,
    the classification of extremal contractions and the preceding
    lemmas give a nonzero proper admissible subcategory of \(\Db(X)\)
    whenever \(K_X\) is not nef. Thus the conjecture also holds
    for threefolds.
\end{remark}

\section{Refinement of Kachi and Andreatta--Wi\'sniewski}
\label{sec:refinement}

In this section, we refine the descriptions of isolated surface fibers
of elementary \(K_X\)-negative contractions \(f:X\to Y\) from smooth
fourfolds to threefolds given in \cite{Kachi1997,AW1998}. We study the
conormal sheaves in cases C2, C3, and C5. These calculations give
smoothness of the target in cases C2 and C3 and exceptional objects
supported on the fibers in cases C3 and C5. 

For simplicity, we write
\(S_m=\PP(1,1,m)\). Recall that \(\cO_X(-K_X)\) is relatively generated by sections \cite[Theorem 0.5(1)]{Kachi1997}. Consequently, \cite[Proposition 4.2.1]{AW1998} implies that every irreducible component of a reduced surface fiber is normal. This excludes type C4 and type C3 with \(m\geq4\), whose components are nonnormal by \cite[Theorem 0.8]{Kachi1997}. Moreover, when the reduced fiber is a plane or an irreducible quadric surface, the target is smooth and the scheme-theoretic fiber is reduced \cite[Theorem 5.9.6]{AW1998}. The resulting cases are summarized in \Cref{table:Classification of special fibers}.

\begin{table}[ht]
\centering
\caption{Classification of special fibers}
\begin{tabularx}{\textwidth}{c c X}
\toprule
\textbf{Type} & \textbf{Fiber} & \textbf{Geometric conclusion} \\
\midrule
A & \(\PP^2\) & smooth target by \cite[Theorem 0.6]{Kachi1997}. \\
B0 & \(E = A \cup B, A \cong B \cong \PP^2, A \cap B = \{\mathrm{pt}\}\) & \(N_{A/X} \cong N_{B/X} \cong \cO_{\PP^2}(-1)^{\oplus 2}\) by \cite[Theorem 0.7]{Kachi1997}. \\
B1--B5 & several cases & smooth target by \cite[Theorem 0.7]{Kachi1997}. \\
C1 & \(\PP^2\) & smooth target by \cite[Theorem 5.9.6]{AW1998}. \\
C2 & \(E=A\cup_L B,\ A\cong B\cong\PP^2,\ L\cong\PP^1\) & \\
C3 & \(E\cong S_m\) & the target is smooth for \(m=2\) by \cite[Theorem 5.9.6]{AW1998}; \(m\geq4\) is impossible by \cite[Theorem 0.8]{Kachi1997} and \cite[Proposition 4.2.1]{AW1998}. \\
C4 & nonnormal fiber & impossible by \cite[Proposition 4.2.1]{AW1998}. \\
C5 & \(E=A\cup_L B,\ A\cong\PP^2,\ B\cong S_2,\ L\cong\PP^1\) & - \\
\bottomrule
\end{tabularx}
\label{table:Classification of special fibers}
\end{table}

In case B0, both \(\cO_A\) and \(\cO_B\) are exceptional. When the target is smooth, pullback supplies an admissible subcategory. Thus the remaining cases are C2, C3, and C5. We treat them separately and then return to the global categorical argument.

\subsection{The Andreatta--Wi\'sniewski method}
We describe the argument used in cases C2, C3, and C5.
The main step is to exclude negative quotients of the conormal
sheaf on lines. We first record the cohomological criteria, then
study anticanonical slices and functions on their Stein factors.
These preparations give the deformation argument in
\Cref{prop:non-neg}. We conclude with a criterion for
the scheme-theoretic fiber to be reduced.

Let \(f:X\to Y\) be an elementary \(K_X\)-negative contraction
from a smooth fourfold to a threefold. After replacing \(Y\)
by an affine neighborhood of \(P\), we assume that \(f^{-1}(P)\)
is the only two-dimensional fiber. Set
\[
E=(f^{-1}(P))_{\mathrm{red}},\qquad
\cI=\cI_{E/X},\qquad E_n=V(\cI^n),
\]
\[
M=\cI/\cI^2,\qquad
N=\cHom_E(M,\cO_E),\qquad H=-K_X.
\]
Thus \(E_1=E\), and \(E_2\) is its first infinitesimal
neighborhood. The scheme-theoretic fiber has ideal
\(J=\fm_P\cO_X\). Since
\(
J\subseteq\cI=\sqrt{J},\) and \(\cI^c\subseteq J
\)
for some \(c>0\), the \(J\)-adic and \(\cI\)-adic systems are
cofinal. We may therefore use the neighborhoods \(E_n\) in
formal functions before proving that \(J=\cI\).

\subsubsection{Cohomology of the reduced fiber}

\begin{assumption}[A1]
\assumptionlabel{ass:A1}{A1}
The embedding \(E\subset X\) is Cohen--Macaulay of codimension
two. The polarization \(H|_E\) identifies \(E\) with one of
the following surfaces.
\begin{enumerate}[label=(\roman*)]
    \item A reduced union of two planes in \(\PP^3\), with
    its hyperplane bundle.
    \item A degree-three surface in \(\PP^4\) with resolution
    \[
    0\to\cO_{\PP^4}(-3)^2
    \to\cO_{\PP^4}(-2)^3
    \to\cO_{\PP^4}\to\cO_E\to0.
    \]
\end{enumerate}
\end{assumption}

In either case,
\begin{equation}
\label{eq:vanishing}
R\Gamma(E,\cO_E)\cong\C,\qquad
R\Gamma(E,\cO_E(-H))=0,\qquad
R\Gamma(E,\omega_E(H))=0.
\end{equation}
For the union of two planes, these follow from
\[
0\to\cO_{\PP^3}(-2)\to\cO_{\PP^3}\to\cO_E\to0
\]
and adjunction \(\omega_E\cong\cO_E(-2H)\).
For the surface of degree three, the resolution in
\Cref{ass:A1} gives the first two vanishings.
Dualizing with \(\omega_{\PP^4}=\cO_{\PP^4}(-5)\) and
twisting by \(H\) gives
\[
0\to\cO_{\PP^4}(-4)
\to\cO_{\PP^4}(-2)^3
\to\cO_{\PP^4}(-1)^2
\to\omega_E(H)\to0,
\]
which gives the third vanishing.

The following lemma reduces exceptionality to the cohomology
of the normal sheaf.

\begin{lemma}
\label{lem:c-exc}
Let \(E\subset X^4\) be Cohen--Macaulay of codimension two,
and put
\[
N=\cHom_E(\cI_{E/X}/\cI_{E/X}^2,\cO_E),
\qquad
\cG=\omega_E\otimes\omega_X^{-1}|_E.
\]
If \(R\Gamma(E,\cO_E)\cong\C\) and \(R\Gamma(E,\cG)=0\), then
\[
\Ext_X^{j+1}(\cO_E,\cO_E)\cong H^j(E,N),
\qquad 0\leq j\leq2.
\]
If also \(R\Gamma(E,N)=0\), then \(\cO_E\) is exceptional.
\end{lemma}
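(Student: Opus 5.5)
The proof runs the local-to-global Ext spectral sequence
\[
E_2^{p,q}=H^p\bigl(E,\cExt_X^q(\cO_E,\cO_E)\bigr)\Longrightarrow\Ext_X^{p+q}(\cO_E,\cO_E),
\]
as in \Cref{prop:koszul-background}. The regular embedding is replaced by a Cohen--Macaulay codimension-two embedding, so the first task is to compute the local Ext sheaves. I expect
\[
\cExt^0_X(\cO_E,\cO_E)\cong\cO_E,\qquad
\cExt^1_X(\cO_E,\cO_E)\cong N,\qquad
\cExt^2_X(\cO_E,\cO_E)\cong\cG,
\]
with all other sheaves zero. Once this holds, the hypotheses kill everything except the row \(q=1\) and the single class \(\C\) in bidegree \((0,0)\).

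\emph{Step 1: the local Ext sheaves.} Since \(X\) is smooth and \(E\) is Cohen--Macaulay of codimension two, Auslander--Buchsbaum gives \(\cO_E\) projective dimension two. Locally there is therefore a Hilbert--Burch resolution
\[
0\to F_1\to F_0\to\cO_X\to\cO_E\to0
\]
with \(F_0,F_1\) free. We apply \(\cHom_X(-,\cO_E)\).

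The result is the complex
\[
\cO_E\xrightarrow{\;0\;}F_0^\vee|_E\to F_1^\vee|_E.
\]
The first differential vanishes because the image of \(F_0\to\cO_X\) lies in \(\cI\), which kills \(\cO_E\). Hence \(\cExt^0=\cO_E\), and \(\cExt^q=0\) for \(q\geq3\).

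Next, \(\cExt^1\) is the kernel of \(F_0^\vee|_E\to F_1^\vee|_E\), which is \(\cHom(\cI,\cO_E)\). Any map \(\cI\to\cO_E\) kills \(\cI^2\), so this equals \(\cHom_E(\cI/\cI^2,\cO_E)=N\).

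Finally, \(\cExt^2\) is the cokernel of \(F_0^\vee|_E\to F_1^\vee|_E\). By right exactness of \(-\otimes\cO_E\), this equals \(\cExt^2_X(\cO_E,\cO_X)\otimes\cO_E\). Since \(E\) is Cohen--Macaulay of codimension two, \(\cExt^2_X(\cO_E,\cO_X)\cong\omega_E\otimes\omega_X^{-1}|_E=\cG\), and this is already an \(\cO_E\)-module. So \(\cExt^2=\cG\).

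These local identifications are canonical: kernels and cokernels of maps between Hom sheaves, and the intrinsic \(\cExt^2(\cO_E,\omega_X)\). They are therefore independent of the chosen resolution and glue. I expect this step to be the main point requiring care, chiefly the identification of \(\cExt^2\) and the gluing.

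\emph{Step 2: the spectral sequence.} The hypothesis \(R\Gamma(E,\cG)=0\) kills the row \(q=2\). The hypothesis \(R\Gamma(E,\cO_E)\cong\C\) leaves only \(E_2^{0,0}=\C\) in row \(0\). The remaining row is \(q=1\), with \(E_2^{j,1}=H^j(E,N)\).

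The only potentially nonzero differentials are \(d_2:E_2^{p,1}\to E_2^{p+2,0}\) and maps into or out of row \(2\). All of these have zero source or zero target: \(E_2^{p+2,0}=H^{p+2}(E,\cO_E)=0\) because \(p+2>0\), and row \(2\) vanishes. The sequence therefore degenerates, giving
\[
\Hom_X(\cO_E,\cO_E)=\C,\qquad \Ext^{j+1}_X(\cO_E,\cO_E)\cong H^j(E,N)\quad(0\leq j\leq2),
\]
and all other Ext groups vanish. The range \(0\le j\le 2\) is automatic, since \(\dim E=2\).

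\emph{Step 3: exceptionality.} If in addition \(R\Gamma(E,N)=0\), Step 2 gives \(R\Hom_X(\cO_E,\cO_E)\cong\C\). Hence \(\cO_E\) is exceptional.
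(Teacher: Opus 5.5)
Your proposal is correct and follows essentially the same route as the paper: a local length-two locally free resolution identifies \(\cExt^q_X(\cO_E,\cO_E)\) with \(\cO_E\), \(N\), \(\cG\) for \(q=0,1,2\) and zero otherwise, and the assumed vanishings make the local-to-global spectral sequence degenerate. The paper asserts these steps without detail. You supply them: why the first differential vanishes, the identification of \(\cExt^2\) with \(\cExt^2_X(\cO_E,\cO_X)\otimes\cO_E\cong\cG\), and the canonicity needed for gluing.
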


\begin{proof}
Since \(X\) is smooth and \(E\) is Cohen--Macaulay of
codimension two, \(\cO_E\) has a locally free resolution of
length two. It gives
\[
\cExt_X^q(\cO_E,\cO_E)\cong
\begin{cases}
\cO_E & q=0,\\
N & q=1,\\
\cG & q=2,\\
0 & q>2.
\end{cases}
\]
The assertion follows from the local-to-global Ext spectral
sequence and the assumed vanishings.
\end{proof}

Under \Cref{ass:A1}, one has
\(\cG=\omega_E(H)\), so the cohomological hypotheses of
\Cref{lem:c-exc} follow from \eqref{eq:vanishing}.
We will also use the following deformation bound.

\begin{lemma}
\label{lem:deform}
Under the cohomological hypotheses of
\Cref{lem:c-exc}, the reduced support \(E\) of an isolated
surface fiber satisfies
\[
h^0(E,N)-h^1(E,N)\leq0.
\]
\end{lemma}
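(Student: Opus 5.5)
We interpret \(h^0(E,N)-h^1(E,N)\) through deformation theory of the closed subscheme \(E\subset X\), and compare it with the actual deformations of \(E\), which are rigid because \(E\) is an isolated fiber. By \Cref{lem:c-exc}, under the stated hypotheses we have
\[
\Ext_X^1(\cO_E,\cO_E)\cong H^0(E,N),\qquad
\Ext_X^2(\cO_E,\cO_E)\cong H^1(E,N),\qquad
\Ext_X^3(\cO_E,\cO_E)\cong H^2(E,N).
\]
We also have \(\Hom(\cO_E,\cO_E)=\C\), since \(R\Gamma(E,\cO_E)\cong\C\) and \(\cExt^0=\cO_E\). The Hilbert scheme of \(X\) at \([E]\), or equivalently the deformation functor of the sheaf \(\cO_E\) with fixed trivial determinant, has tangent space \(H^0(E,N)=\Hom(\cI,\cO_E)\). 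It has an obstruction space contained in \(\Ext^1_X(\cI,\cO_E)\). Since \(\cI\to\cO_X\to\cO_E\) and \(R\Gamma(E,\cO_E)\cong\C\), this group is identified with \(\Ext^2_X(\cO_E,\cO_E)\cong H^1(E,N)\). The standard dimension estimate for a local moduli space cut out by obstructions then gives
\[
\dim_{[E]}\operatorname{Hilb}(X)\ \geq\ h^0(E,N)-h^1(E,N).
\]

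Next we show that \(\operatorname{Hilb}(X)\) is zero-dimensional at \([E]\). Suppose \(\{E_t\}\) is a one-parameter family of subschemes of \(X\) with \(E_0=E\). Each \(E_t\) has the same Hilbert polynomial with respect to \(H\). In particular, \(H^2\cdot E_t=H^2\cdot E>0\) and \(E_t\) is a surface. Rigidity of proper subvarieties under the contraction gives that, for \(t\) near \(0\), \(f(E_t)\) lies near \(P\). Since \(-K_X\) is \(f\)-ample and \(\rho(X/Y)=1\), every curve in \(E_t\) is numerically a positive multiple of the extremal ray. Hence \(f\) contracts every curve of \(E_t\), so \(f(E_t)\) is a point. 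Because \(\dim E_t=2\) and \(f^{-1}(P)\) is the only two-dimensional fiber over our affine neighborhood, \(E_t\subseteq f^{-1}(P)\) set-theoretically. Then \((E_t)_{\mathrm{red}}\subseteq E\).

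Equality of Hilbert polynomials rules out any change in the two-dimensional part. A flat limit argument, or constancy of \(\chi(\cO_{E_t}(kH))\) together with the fact that \(E\) is reduced and Cohen--Macaulay without embedded points, forces \(E_t=E\). Thus the family is constant. Therefore \([E]\) is an isolated point of \(\operatorname{Hilb}(X)\), and the inequality yields \(h^0(E,N)-h^1(E,N)\leq0\).

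The main obstacle is the obstruction-theory step. We must justify that obstructions for the Hilbert scheme of a non-lci Cohen--Macaulay subscheme live in \(H^1(E,N)\), or at least in a space of dimension \(\leq h^1(E,N)\). I would first try the sheaf-deformation route. Deformations of \(\cO_E\) as a sheaf with trivial traceless part have tangent \(\Ext^1\) and obstructions in \(\Ext^2\), which is \(H^1(E,N)\) by \Cref{lem:c-exc}. For the comparison with the Hilbert scheme, we use \(\Hom(\cO_E,\cO_E)=\C\) and the vanishing of \(H^1(\cO_E)\), so that the quotient structure \(\cO_X\to\cO_E\) deforms uniquely.

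A secondary subtlety is excluding deformations \(E_t\) that are nonreduced or pick up embedded points while staying inside \(f^{-1}(P)\). For this I would rely on the fixed Hilbert polynomial and on \(E\) being the reduced fiber.
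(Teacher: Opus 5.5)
Your overall strategy is the paper's: bound \(\dim_{[E]}\operatorname{Hilb}(X)\) from below by \(h^0(E,N)-h^1(E,N)\), then show \([E]\) is isolated because the surface fiber is isolated. Your treatment of the obstruction space is correct and more explicit than the paper's. For any closed subscheme, the Hilbert (Quot) scheme has obstruction space \(\Ext^1_X(\cI,\cO_E)\), with no lci hypothesis. The triangle \(\cI\to\cO_X\to\cO_E\) together with \(H^{>0}(E,\cO_E)=0\) identifies this with \(\Ext^2_X(\cO_E,\cO_E)\cong H^1(E,N)\) via \Cref{lem:c-exc}. So what you call the main obstacle is already settled, and the sheaf-deformation detour is not needed.

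The genuine gaps are in the rigidity step.

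First, your argument that \(f(E_t)\) is a point is circular. \(f\)-ampleness of \(-K_X\) and \(\rho(X/Y)=1\) constrain only curves already contracted by \(f\), so they cannot show that curves in \(E_t\) lie in the extremal ray. A priori \(E_t\) could be \(f^{-1}(\Gamma_t)\) for a curve \(\Gamma_t\subset Y\).

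Second, the final step needs \(E_t\) to be reduced. Only then does the set-theoretic inclusion \(E_t\subset f^{-1}(P)\) give a scheme-theoretic inclusion \(E_t\subset E\), after which equal Hilbert polynomials force \(E_t=E\). The Hilbert polynomial alone does not exclude nonreduced members supported in \(E\). For instance, in case C2 one has \(\chi(\cO_E(kH))=(k+1)^2\). A double structure \(Z\) on the plane \(A\) with \(\cI_{A/X}/\cI_{Z/X}\cong\cO_A(-1)\) has \(\chi(\cO_Z(kH))=\binom{k+2}{2}+\binom{k+1}{2}=(k+1)^2\) as well.

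The missing idea, which is exactly what the paper uses, concerns a flat proper family whose special member \(E\) is reduced with \(h^0(\cO_E)=1\). Nearby members are again reduced, by openness of geometric reducedness of fibers. They are also connected with \(h^0(\cO_{E_t})=1\), by semicontinuity. Then \(f|_{E_t}\) is a map from a proper connected scheme to the affine base, hence constant, which repairs your first step. Since \(E_t\) is two-dimensional, isolation puts it in \(f^{-1}(P)\). Reducedness then gives \(E_t\subset E\), and hence \(E_t=E\).
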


\begin{proof}
The tangent--obstruction estimate gives
\[
\dim_{[E]}\operatorname{Hilb}(X)
\geq h^0(E,N)-h^1(E,N).
\]
If the right-hand side were positive, there would be a
nonconstant Hilbert family through \(E\). Nearby members
are proper reduced connected surfaces with \(h^0(\cO)=1\).
Their maps to the affine base are constant, and isolation
of the surface fiber forces them into \(f^{-1}(P)\).
They are therefore reduced subschemes of \(E\) with the
same Hilbert polynomial as \(E\), and hence equal to \(E\).
This is a contradiction.
\end{proof}

\subsubsection{The blow-up and anticanonical slices}

Consider the blow-up
\[
\beta:T=\operatorname{Bl}_E X\to X,\qquad
\cI\cO_T=\cO_T(-D)=\cL.
\]

\begin{assumption}[A2]
\assumptionlabel{ass:A2}{A2}
The fourfold \(T\) is normal and Gorenstein, and
\[
T=\PP_X(\cI),\qquad
D=\PP_E(M),\qquad
\cL|_D\cong\cO_D(1),\qquad
K_T=\beta^*K_X+D.
\]
\end{assumption}

A quotient \(M|_l\twoheadrightarrow\cO_l(-1)\) on an
\(H\)-degree-one line determines a curve
\(\gamma\cong l\) in \(D\), with
\[
\cL|_\gamma\cong\cO_l(-1),\qquad
D\cdot\gamma=\beta^*H\cdot\gamma=1,\qquad
K_T\cdot\gamma=0.
\]
The last equality is the starting point of the deformation
argument.

We next restrict the conormal sheaf to an anticanonical slice.
The relative anticanonical bundle is generated by sections
by \cite[Theorem 0.5(1)]{Kachi1997}. Choose a general member
\(D_h=(h=0)\in|H|\).

\begin{assumption}[A3]
\assumptionlabel{ass:A3}{A3}
The polarization \(H|_E\) is very ample, and \(D_h\) is smooth
near \(E\). The curve \(C=E\cap D_h\) is either isomorphic
to \(\PP^1\) or a union \(C_A\cup C_B\) of two smooth rational
curves meeting transversally at one point. The embedding
\(E\subset X\) is regular near \(C\), and \(h\) is a
nonzerodivisor on both \(\cO_E\) and \(\cO_{E_2}\).
These properties hold on an open set of slice parameters.
\end{assumption}

The target is Gorenstein near \(P\). Relative vanishing and
duality give
\begin{equation}
\label{eq:aw}
Rf_*\cO_X\cong\cO_Y,\qquad
Rf_*\omega_X\cong\omega_Y[-1]
\end{equation}
by \cite[Theorem 1.2.4]{AW1998}.
Write \(g=f|_{D_h}\), and take its Stein factorization
\[
D_h\xrightarrow{\psi}Z\xrightarrow{\tau}Y.
\]
A general fiber \(F_y\) of \(f\) is a smooth conic, so
\[
\operatorname{length}(D_h\cap F_y)
=(-K_X)\cdot F_y=2.
\]
Thus \(\tau\) has degree two. Applying \(Rf_*\) to the
anticanonical divisor sequence gives
\[
0\to\cO_Y\to\tau_*\cO_Z\to\omega_Y\to0.
\]
In particular, \(\tau_*\cO_Z\) is locally free of rank two.
After shrinking \(Y\), the trace splitting gives
\[
\tau_*\cO_Z=\cO_Y\oplus\cO_Yr,\qquad
r^2=b\in\cO_Y.
\]

The reduced support of \(g^{-1}(P)\) is the connected curve
\(C\). Hence \(\tau^{-1}(P)\) has a single underlying point,
denoted by \(z_P\). Its fiber algebra is
\(\C[r]/(r^2-b(P))\), so \(b(P)=0\). Thus
\[
b\in\fm_P,\qquad r(z_P)=0.
\]
The pullback of \(r\) vanishes on the reduced curve \(C\).
We denote its first-order conormal class by \(\bar r\).

We will use the following vanishing result
\cite[Lemma 1.2.2(b)]{AW1998}.

\begin{lemma}
\label{lem:aw-slice-vanishing}
Let \(f:X\to Y\) be a good contraction with
\(\dim X-\dim Y=1\), and let
\(E=(f^{-1}(P))_{\mathrm{red}}\) be a two-dimensional fiber.
If \(F'\subset X\) is supported on \(E\) and
\(D\in|-K_X|\) contains no irreducible component of \(E\), then
\[
H^1(F'\cap D,\cO_{F'\cap D})=0.
\]
In particular, this applies to \(F'=E_n\) for every \(n\geq1\).
\end{lemma}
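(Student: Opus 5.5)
The plan is to push the question down to \(Y\) by relative vanishing for the anticanonical divisor, and then use that \(D\to Y\) has fibers of dimension at most one near \(P\). Throughout, shrink \(Y\) to an affine neighborhood of \(P\), and write \(g=f|_D:D\to Y\). Since \(D\in|-K_X|\), we have \(\cO_X(-D)\cong\omega_X\), so there is an exact sequence
\[
0\to\omega_X\to\cO_X\to\cO_D\to0 .
\]

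First, compute \(R^1g_*\cO_D\). Because \(-K_X\) is \(f\)-ample, relative Kawamata--Viehweg vanishing gives \(R^qf_*\cO_X=0\) for \(q>0\). Kollár's vanishing for higher direct images of the dualizing sheaf gives \(R^qf_*\omega_X=0\) for \(q>\dim X-\dim Y=1\). This is consistent with \eqref{eq:aw}, which records \(Rf_*\omega_X\cong\omega_Y[-1]\) in our setting. Applying \(Rf_*\) to the sequence above yields
\[
R^1f_*\cO_X\to R^1g_*\cO_D\to R^2f_*\omega_X .
\]
Both outer terms vanish, so \(R^1g_*\cO_D=0\). This step is the only place where the hypotheses that the contraction is good and has relative dimension one really enter. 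I expect it to be the main point requiring care: one must check that the required form of Kollár's vanishing (or Grauert--Riemenschneider applied to the Stein factorization) holds for \(f\).

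Second, bound the fiber dimension of \(g\). Since \(D\) contains no irreducible component of the surface \(E\), the fiber \(g^{-1}(P)_{\mathrm{red}}=E\cap D\) has dimension at most one. By upper semicontinuity of fiber dimension, after shrinking \(Y\) every fiber of \(g\) has dimension at most one. Consequently \(R^2g_*\cF=0\) for every coherent sheaf \(\cF\) on \(D\), by the formal-functions bound on the fiber dimension. Hence \(R^1g_*\) is right exact on coherent sheaves on \(D\).

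Third, let \(F'\) be supported on \(E\). Then \(F'\cap D\) is a closed subscheme of \(D\) supported in \(g^{-1}(P)\), and the surjection \(\cO_D\twoheadrightarrow\cO_{F'\cap D}\) induces a surjection
\[
0=R^1g_*\cO_D\twoheadrightarrow R^1g_*\cO_{F'\cap D}.
\]
Because \(F'\cap D\) is proper and supported over the single point \(P\) of the affine base, \(R^1g_*\cO_{F'\cap D}\) is the skyscraper sheaf at \(P\) with stalk \(H^1(F'\cap D,\cO_{F'\cap D})\). This gives the claimed vanishing.

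The final assertion is immediate. Each \(E_n=V(\cI^n)\) is supported on \(E\), so the argument applies with \(F'=E_n\) for every \(n\geq1\).
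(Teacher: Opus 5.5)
Your proof is correct and complete. Relative Kawamata--Viehweg together with Koll\'ar's vanishing \(R^2f_*\omega_X=0\) (also visible from \eqref{eq:aw} in the fourfold setting) gives \(R^1(f|_D)_*\cO_D=0\). Since \(f|_D\) has fibers of dimension at most one near \(P\), the functor \(R^1(f|_D)_*\) is right exact, and this yields the vanishing for every \(F'\cap D\).

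The paper gives no argument of its own and simply quotes \cite[Lemma 1.2.2(b)]{AW1998}; your argument is the standard proof of that statement. The step you flagged is unproblematic: Koll\'ar's theorem applies to the projective surjection \(f\) from the smooth projective \(X\), and the vanishing is local on \(Y\).
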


\begin{assumption}[A4]
\assumptionlabel{ass:A4}{A4}
For every \(H\)-degree-one line \(l\subset E\), there is
an anticanonical slice \(D_{h'}\) containing \(l\), where
\(h'\) avoids all components of \(E\) and is a nonzerodivisor
on \(\cO_{E_2}\). For \(C'=E\cap D_{h'}\), one has
\[
H^0(C',\cO_{C'})=\C,\qquad
H^1(E_2\cap D_{h'},\cO_{E_2\cap D_{h'}})=0.
\]
The same cohomology assertions hold for a general slice
in \Cref{ass:A3}.
\end{assumption}

The nonzerodivisor condition identifies the conormal bundle
of \(C\subset D_h\) with \(M|_C\), and gives
\[
0\to M|_C\to\cO_{E_2\cap D_h}\to\cO_C\to0.
\]
Since constants lift and
\(H^1(\cO_{E_2\cap D_h})=0\), we obtain
\[
H^1(C,M|_C)=0.
\]

\subsubsection{First-order sections}

We record the uniqueness condition needed to pass from
deformations of quotient curves to moving lines on \(E\).

\begin{assumption}[A5]
\assumptionlabel{ass:A5}{A5}
For every \(H\)-degree-one line \(l\subset E\), there is at
most one surjection \(M|_l\twoheadrightarrow\cO_l(-1)\),
up to an automorphism of the target.
\end{assumption}

Functions vanishing at \(P\) give first-order conormal
sections
\[
\theta:\fm_P\to H^0(E,M),\qquad
a\longmapsto[f^*a].
\]
Put
\[
W=\im\theta,\qquad
W_C=\im\bigl(W\to H^0(C,M|_C)\bigr).
\]
To compare \(W_C\) with all conormal sections on \(C\),
we recall the lifting result
\cite[Lemma 5.3(b)]{AW1998}.

\begin{lemma}
\label{lem:aw-conormal-lifting}
Let \(f:X\to Y\) be a good contraction or a crepant
birational contraction, with \(X\) smooth. Suppose that
\(E=(f^{-1}(P))_{\mathrm{red}}\) is locally complete
intersection and \(\operatorname{Bl}_E X\) is klt.
If \(M=\cI_{E/X}/\cI_{E/X}^2\) is nef, then
\[
\fm_P\to H^0(E,M),\qquad a\longmapsto[f^*a],
\]
is surjective.
\end{lemma}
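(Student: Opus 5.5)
The plan is to lift sections of \(M\) to functions on \(X\) through the blow-up \(\beta:T=\operatorname{Bl}_E X\to X\), using relative Kawamata--Viehweg vanishing on \(T\) over \(Y\). After shrinking, we may take \(Y\) affine. Write \(\cO_T(-D)=\cI\cO_T\). Since \(E\) is a local complete intersection, \(M\) is locally free, \(T=\PP_X(\cI)\), \(D=\PP_E(M)\) and \(\cO_T(-D)|_D\cong\cO_D(1)\). We will use three identifications.
\begin{enumerate}[label=(\alph*)]
\item \(\beta_*\cO_D(1)=M\). This is the projective bundle formula for \(\PP_E(M)\to E\).
\item \(\beta_*\cO_T(-D)=\cI\). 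Here \(\overline{\cI}=\cI\), since \(\cI=\sqrt{\cI}\) is radical and hence integrally closed, and \(T\) is normal.
\item \(H^0(X,\cI)=f^*\fm_P\). Every function on \(X\) is pulled back from \(Y\) because \(f_*\cO_X=\cO_Y\). Such a function vanishes on \(E\) exactly when it vanishes at \(P=f(E)\).
\end{enumerate}
Under these identifications, the map \(\fm_P\to H^0(E,M)\), \(a\mapsto[f^*a]\), becomes the restriction map \(H^0(T,\cO_T(-D))\to H^0(D,\cO_D(1))\).

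Next, apply \(\Gamma\) to the sequence
\[
0\to\cO_T(-2D)\to\cO_T(-D)\to\cO_D(1)\to0 .
\]
The restriction map is then surjective once \(H^1(T,\cO_T(-2D))=0\). Because \(Y\) is affine, it suffices to show \(R^1(f\circ\beta)_*\cO_T(-2D)=0\). Since \(T\) is klt, and Gorenstein with \(K_T=\beta^*K_X+D\), we can write
\[
-2D=K_T+\bigl(\beta^*H-3D\bigr),\qquad H=-K_X .
\]
So it is enough to check that \(\beta^*H-3D\) is nef and big over \(Y\).

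The main step is positivity of \(-D\) over \(Y\). Every curve in \(T\) contracted to \(P\) maps into \(f^{-1}(P)\), whose support is \(E\). Hence such a curve lies in \(D\). On \(D\) we have \(-D|_D=\cO_D(1)\), which is nef because \(M\) is nef. Curves over other points of \(Y\) do not meet \(D\). Therefore \(-D\) is nef over \(Y\).

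We then split into the two cases of the statement.
\begin{enumerate}[label=(\roman*)]
\item In the good case, \(\beta^*H\) is nef over \(Y\). Also \(\beta^*H-\epsilon D\) is ample over \(Y\) for small \(\epsilon>0\), as the pullback of a relatively ample class twisted by a small multiple of the \(\beta\)-ample class \(-D\). Writing
\[
\beta^*H-3D=(\beta^*H-\epsilon D)+(3-\epsilon)(-D),
\]
we get relatively ample plus relatively nef, which is relatively ample.
\item In the crepant birational case, \(\beta^*H\equiv_Y0\). Then \(\beta^*H-3D\equiv_Y-3D\) is nef over \(Y\), and it is big over \(Y\) because \(f\circ\beta\) is birational.
\end{enumerate}
In both cases, relative Kawamata--Viehweg vanishing for the klt variety \(T\) gives \(R^1(f\circ\beta)_*\cO_T(-2D)=0\), which proves the surjectivity.

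The step I expect to need the most care is the positivity argument: relative nefness of \(-D\) and its combination with \(\beta^*H\) in the good case. A secondary point to verify is identification (b), namely that \(\beta_*\cO_T(-D)\) equals \(\cI\) itself.
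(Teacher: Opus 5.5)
Your argument is correct. The paper does not prove this lemma; it cites \cite[Lemma 5.3(b)]{AW1998}. Your route is the same mechanism the paper itself runs in the proof of \Cref{lem:fiber}: surjectivity of $H^0(T,\cO_T(-D))\to H^0(D,\cO_D(1))$ follows from relative Kawamata--Viehweg vanishing of $R^1(f\circ\beta)_*\cO_T(-2D)$, and your deduction that $-D$ is relatively nef from the nefness of $M$ plays the role of \Cref{ass:A9} there.

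One small adjustment: the formula $K_T=\beta^*K_X+D$ presumes that $E$ has codimension two, which holds in all of the paper's applications. In codimension $c$ one has $K_T=\beta^*K_X+(c-1)D$, and the same argument goes through with $\beta^*H-(c+1)D$ in place of $\beta^*H-3D$.
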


For crepant contractions of smooth threefolds, the relevant
conormal bundles are described in
\cite[Propositions 5.6.2--5.6.3]{AW1998}. In the nonnegative
splitting cases below, they are globally generated.
The blow-up is smooth for an irreducible rational fiber
and has an ordinary double point over the node for a
two-component fiber. The lifting lemma then gives
surjectivity from functions on the Stein factor.
We formulate the required conclusions as follows.

\begin{assumption}[A6]
\assumptionlabel{ass:A6}{A6}
On a general slice satisfying \Cref{ass:A3},
the conormal bundle has one of the following forms.
\begin{enumerate}[label=(\roman*)]
    \item \(C\cong\PP^1\), and
    \(M|_C\cong\cO(1)^{\oplus2}\) or
    \(\cO\oplus\cO(2)\).
    \item \(C=C_A\cup C_B\), where the components are copies
    of \(\PP^1\) meeting transversally at one point, and
    \(M|_{C_i}\cong\cO\oplus\cO(1)\).
\end{enumerate}
Moreover, the first-order map
\[
\fm_{z_P}\to H^0(C,M|_C)
\]
from functions on \(Z\) is surjective.
\end{assumption}

Every function on \(Z\) vanishing at \(z_P\) can be written
as \(a+cr\), with \(a\in\fm_P\). Since \(r\) vanishes on
\(C\), its first-order class depends only on the value of
\(c\) at \(P\). Consequently,
\begin{equation}
\label{eq:W}
H^0(C,M|_C)=W_C+\C\bar r.
\end{equation}
In all cases of \Cref{ass:A6},
\[
h^0(C,M|_C)=4,\qquad \dim W_C\geq3.
\]
For a two-component slice with node \(p\), restriction to
either component is surjective, with kernel
\[
H^0(C_j,M|_{C_j}(-p))\cong\C,\qquad j\neq i.
\]
Hence
\[
\dim(W_C|_{C_i})\geq\dim W_C-1\geq2.
\]

The next assumption makes restriction of global conormal
sections to the slice injective.

\begin{assumption}[A7]
\assumptionlabel{ass:A7}{A7}
\(H^0(E,M(-H))=0\).
\end{assumption}

\begin{lemma}
\label{lem:fb}
Suppose that \(h\) is a nonzerodivisor on
\(\cO_X/\cI^2\) and \Cref{ass:A7} holds.
Then \(f^*b\in\cI^2\). Its quadratic initial class on
the slice is \(\bar r^2\).
\end{lemma}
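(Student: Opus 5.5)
The plan is to compare \(f^*b\) with the square of the function \(\psi^*r\) on the slice \(D_h\), and then use \Cref{ass:A7} to lift the vanishing of its first-order class from \(C\) back to \(E\).

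First, since \(b\in\fm_P\), we have \(f^*b\in J\subseteq\cI\), so \(f^*b\) has a first-order class \(\theta(b)\in H^0(E,M)\). On \(D_h\) the Stein factorization gives \(g=\tau\circ\psi\). The relation \(r^2=b\) in \(\tau_*\cO_Z\) then gives
\[
g^*b=(\psi^*r)^2\quad\text{in }\cO_{D_h}.
\]
The function \(\psi^*r\) vanishes on the reduced curve \(C\), because \(r(z_P)=0\) and \(\psi(C)=z_P\). Hence \(\psi^*r\in\cI_{C/D_h}\), and therefore \(g^*b\in\cI_{C/D_h}^2\).

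Next I identify ideals on the slice. Near \(C\), the image of \(\cI\) in \(\cO_{D_h}\) is \(\cI_{C/D_h}\), since \(C=E\cap D_h\) scheme-theoretically. The image of \(\cI^2\) is \(\cI_{C/D_h}^2\). Because \(h\) is a nonzerodivisor on \(\cO_X/\cI^2\), restriction identifies \(M|_C\) with the conormal sheaf \(\cI_{C/D_h}/\cI_{C/D_h}^2\). This is the identification already used after \Cref{ass:A4}. Under it, the restriction of \(\theta(b)\) to \(C\) is the class of \(g^*b\), which is zero by the previous paragraph.

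To pass back to \(E\), I use the nonzerodivisor hypothesis once more. Multiplication by \(h\) on \(\cO_X/\cI^2\) is injective, so it induces an exact sequence
\[
0\to M(-H)\xrightarrow{\cdot h}M\to M|_C\to0 .
\]
Taking global sections and applying \Cref{ass:A7}, the restriction map \(H^0(E,M)\to H^0(C,M|_C)\) is injective. Hence \(\theta(b)=0\), which means exactly that \(f^*b\in\cI^2\).

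For the quadratic initial class, consider the image of \(f^*b\) in \(\cI^2/\cI^3\) restricted to the slice. Near \(C\) the embedding is regular by \Cref{ass:A3}, so \(\cI^2/\cI^3\) restricts to \(\Sym^2(M|_C)\cong\cI_{C/D_h}^2/\cI_{C/D_h}^3\). There the class of \(g^*b=(\psi^*r)^2\) is the square of the first-order class of \(\psi^*r\), namely \(\bar r^2\).

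The main point needing care is compatibility of restriction with these quotients. One must check that \((\cI^2+hR)/(\cI^3+hR)\) agrees with \(\cI_{C/D_h}^2/\cI_{C/D_h}^3\) near \(C\), where \(R\) denotes the local ring \(\cO_X\). I would get this from regularity of \(E\subset X\) near \(C\) together with \(h\) being transverse, so that \(h,\) together with a local regular sequence for \(E\), forms a regular sequence. Away from \(C\) nothing is needed, since the statement about the initial class concerns only the slice.
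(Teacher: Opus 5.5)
Your proposal is correct and follows the paper's proof essentially step for step: the sequence \(0\to M(-H)\to M\to M|_C\to0\) together with \Cref{ass:A7} makes restriction to \(C\) injective on \(H^0(E,M)\), the relation \(g^*b=(\psi^*r)^2\) kills the first-order class of \(f^*b\) on the slice, and regularity of \(E\subset X\) near \(C\) identifies the quadratic initial class with \(\bar r^2\). Your explicit identification of \(M|_C\) with \(\cI_{C/D_h}/\cI_{C/D_h}^2\) spells out what the paper leaves to the slice hypotheses. It needs \(h\) to be a nonzerodivisor on \(\cO_E\), which \Cref{ass:A3} supplies, and which also follows from your hypothesis because \(\cI\) is radical.
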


\begin{proof}
The slice hypotheses give
\[
0\to M(-H)\xrightarrow{\cdot h}M\to M|_C\to0.
\]
\Cref{ass:A7} therefore makes
\(H^0(E,M)\to H^0(C,M|_C)\) injective.
Since \(b\in\fm_P\), its pullback belongs to \(\cI\).
On \(D_h\), one has \(f^*b=r^2\), whose first-order class
along \(C\) is zero. Thus
\(
[f^*b]=0\) in $H^0(E,M)$,
and \(f^*b\in\cI^2\). Regularity of the embedding near
\(C\) identifies its quadratic initial class with
\(\bar r^2\).
\end{proof}

\subsubsection{Excluding negative quotients}

We first record the elementary fact about sections on
\(\PP^1\) used in the argument. For a rank-two vector
bundle \(V\) on \(\PP^1\) and
\(U\subset H^0(\PP^1,V)\), a horizontal base curve is a
curve in \(\PP(V)\) dominating \(\PP^1\) on which all
tautological sections induced by \(U\) vanish.

\begin{lemma}
\label{lem:horizontal-base}
A subspace of codimension at most one in
\(H^0(\cO(1)^{\oplus2})\) has no horizontal base curve.
For \(V=\cO\oplus\cO(d)\), with \(d=1,2\), a subspace
\(U\subset H^0(V)\) of dimension at least \(d+1\) has
a horizontal base curve only if \(U=H^0(\cO(d))\).
In this case, the base curve is the quotient section to
\(\cO\), and every section complementary to \(U\) is
nowhere zero on it.
\end{lemma}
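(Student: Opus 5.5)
This is an elementary statement about tautological sections on \(\PP(V)\) over \(\PP^1\). The plan is to translate "horizontal base curve" into a statement about line subbundles of \(V\) and then count dimensions. A horizontal curve \(\Gamma\subset\PP(V)\) dominating \(\PP^1\) corresponds, after normalization and passing to the generic point, to a rank-one quotient \(V\otimes K\twoheadrightarrow K'\) over the function field \(K'\) of \(\Gamma\). A section \(s\in U\) vanishes on \(\Gamma\) exactly when its image in that quotient is zero. If \(\Gamma\) is a section of \(\PP(V)\to\PP^1\), the quotient is a line bundle quotient \(V\twoheadrightarrow Q\) with kernel \(K\subset V\) a line subbundle, and the condition reads \(U\subset H^0(K)\).

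For a multisection of degree \(e>1\), pull back along the finite map \(\tilde\Gamma\to\PP^1\). The quotient then has a kernel line subbundle of \(\pi^*V\) containing \(\pi^*U\). Since \(U\) generates (see below), this subbundle contains the subsheaf generated by \(U\). So the first step is to show that in all our cases the subsheaf \(\cO\cdot U\subset V\) generated by \(U\) either has rank two, which rules out any horizontal base curve, or has rank one, in which case the base curve is the section corresponding to the saturation of \(\cO\cdot U\).

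For \(V=\cO(1)^{\oplus2}\) we have \(h^0=4\) and \(\dim U\geq3\). A rank-one subsheaf of \(V\) has saturation \(\cO(a)\) with \(a\leq1\). Its space of sections has dimension at most \(2\), attained only for \(\cO(1)\) embedded as a constant direction. Hence \(U\) of dimension \(3\) cannot lie in \(H^0\) of a line subbundle. So \(U\) generates a rank-two subsheaf, its sections have no common zero on the generic fiber of \(\PP(V)\), and there is no horizontal base curve.

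For \(V=\cO\oplus\cO(d)\) with \(\dim U\geq d+1\), line subbundles \(\cO(a)\subset V\) have \(a\leq d\). One checks that \(h^0(\cO(a))\geq d+1\) forces \(a=d\). Since \(\Hom(\cO(d),\cO)=0\), the map \(\cO(d)\to V\) is the standard summand up to an automorphism, so its \(H^0\) is exactly \(H^0(\cO(d))\subset H^0(V)\). Thus a horizontal base curve exists only if \(U\subseteq H^0(\cO(d))\), and by dimension \(U=H^0(\cO(d))\). The base curve is then the section of \(\PP(V)\) given by the quotient \(V\to V/\cO(d)=\cO\), that is, the quotient section to \(\cO\).

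A section \(s\) complementary to \(U\) has nonzero component \(c\in H^0(\cO)=\C\). Its image in the quotient \(\cO\) is the nonzero constant \(c\), so it vanishes nowhere on that curve.

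The main obstacle is justifying the reduction from an arbitrary multisection to the saturation of the subsheaf generated by \(U\). I would handle it generically. At the generic point of \(\PP^1\), if \(U\) spans \(V_\eta\), then the tautological sections from \(U\) have no common zero on \(\PP(V_{\bar\eta})\), since a common zero is a rank-one quotient killing a spanning set. So no curve dominating \(\PP^1\) can be a base curve. If \(U\) spans only a line in \(V_\eta\), the common zeros on the generic fiber are the single point given by \(V_\eta/\langle U\rangle\). Its closure is the section attached to the saturation.
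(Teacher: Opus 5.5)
Your proposal is correct and follows essentially the same route as the paper: a horizontal base curve forces the evaluation map \(U\otimes\cO_{\PP^1}\to V\) to have generic rank one, so \(U\subset H^0(L)\) for the saturation \(L\) of its image, and the degree bound on line subbundles of \(V\) settles each case. The detour through multisections is unnecessary, because your generic-fiber argument already shows that the horizontal part of the base locus is at most a single section.
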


\begin{proof}
A horizontal base curve implies that the evaluation map
\(U\otimes\cO_{\PP^1}\to V\) has generic rank one.
Let \(L\subset V\) be the saturation of its image.
Then \(U\subset H^0(L)\).

If \(V=\cO(1)^{\oplus2}\), every line subbundle has degree
at most one. Thus \(h^0(L)\leq2\), whereas \(\dim U\geq3\).

If \(V=\cO\oplus\cO(d)\), a line subbundle of degree at
most \(d-1\) has at most \(d\) sections. Hence
\(L=\cO(d)\) and \(U=H^0(\cO(d))\).
The annihilating quotient is \(V\twoheadrightarrow\cO\).
A complementary section has a nonzero constant component,
so its restriction to this quotient section is nowhere zero.
\end{proof}

To apply deformation theory when \(T\) is singular, we
require the following resolution.

\begin{assumption}[A8]
\assumptionlabel{ass:A8}{A8}
There is a projective small crepant resolution
\(\mu:\tilde T\to T\), with \(\tilde T\) smooth, which is
an isomorphism over the smooth locus. Every degree-\(-1\)
quotient curve over an \(H\)-degree-one line has a unique
lift \(\tilde\gamma\), mapped isomorphically onto \(\gamma\).
\end{assumption}

When \(T\) is smooth, we take \(\mu=\id_T\).

\begin{proposition}
\label{prop:non-neg}
Under \Cref{ass:A2,ass:A3} and
\Cref{ass:A5}--\Cref{ass:A8}, there is no surjection
\(M|_l\twoheadrightarrow\cO_l(-1)\) on an
\(H\)-degree-one line.
\end{proposition}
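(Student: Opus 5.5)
We argue by contradiction: suppose a surjection \(M|_l\twoheadrightarrow\cO_l(-1)\) exists on an \(H\)-degree-one line \(l\subset E\). By \Cref{ass:A2} it defines a curve \(\gamma\cong l\) in \(D=\PP_E(M)\subset T\) with \(\cL|_\gamma\cong\cO_l(-1)\) and \(K_T\cdot\gamma=0\). By \Cref{ass:A8} it lifts to \(\tilde\gamma\subset\tilde T\), a smooth rational curve with \(K_{\tilde T}\cdot\tilde\gamma=0\) because \(\mu\) is crepant. Since \(\tilde T\) is a smooth fourfold, the standard bound from Mori's deformation theory gives
\[
\dim_{[\tilde\gamma]}\operatorname{Hom}(\PP^1,\tilde T)\geq -K_{\tilde T}\cdot\tilde\gamma+4=4,
\]
so modulo \(\operatorname{Aut}(\PP^1)\) the curve \(\tilde\gamma\) moves in at least a one-dimensional family of rational curves \(\tilde\gamma_t\). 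Pushing forward by \(\mu\) and \(\beta\), we obtain a family of curves \(\gamma_t\subset T\).

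The next step is to show that the curves \(\gamma_t\) stay inside \(D\) and project to \(H\)-degree-one lines \(l_t\subset E\) that really move. Since \(D\cdot\gamma=1\) and \(\beta^*H\cdot\gamma=1\) are deformation invariant, the curves \(\gamma_t\) are contracted by \(f\circ\beta\), because \(\gamma\) is. Hence they lie over the fiber \(E\), that is, in \(D\). Their images \(l_t=\beta(\gamma_t)\) have \(H\)-degree one, so they are lines on \(E\). Each \(\gamma_t\) is a quotient curve \(M|_{l_t}\twoheadrightarrow\cO_{l_t}(-1)\). By \Cref{ass:A5} this quotient is determined by \(l_t\), so the family of quotient curves injects into the family of lines. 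Therefore the lines \(l_t\) sweep out a curve's worth of lines on \(E\), and they cover either a component of \(E\) or at least a one-dimensional family meeting a general slice.

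To reach the contradiction we use the slice \(C=E\cap D_h\). Take a general \(t\); then \(l_t\) meets \(C\) at a point \(x_t\), and these points vary along \(C\). The negative quotient means every section \(s\in W\) restricts on \(l_t\) to a section of \(\cO_{l_t}(-1)\), so \(s\) vanishes on \(\gamma_t\). Consequently all tautological sections from \(W_C\) vanish at the points of \(\PP(M|_C)\) given by \(\gamma_t\cap\PP(M|_{x_t})\), and these points trace a horizontal base curve of \(W_C\) over \(C\), or over the component \(C_i\) in case (ii) of \Cref{ass:A6}. By \eqref{eq:W}, \(\dim W_C\geq3\) and \(\dim W_C|_{C_i}\geq2\), so \Cref{lem:horizontal-base} rules out the \(\cO(1)^{\oplus2}\) cases directly. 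In the remaining cases the base curve must be the section of \(\PP(M|_C)\) corresponding to the quotient to \(\cO\), and \(W_C\) is exactly \(H^0(\cO(d))\). Then \(\bar r\) is complementary to \(W_C\) and is nowhere zero on that base curve.

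Finally, we exploit \Cref{lem:fb}: \(f^*b\in\cI^2\) has quadratic initial class \(\bar r^2\). We restrict \(f^*b\) to \(\gamma_t\), viewing its class in \(\Sym^2M\) evaluated against the quotient \(\cO(-1)\). On one hand it equals \(\bar r^2\), which is nonzero along the base curve. On the other hand \(b\in\fm_P\), so \(f^*b\) decomposes through functions in \(W\), which vanish on \(\gamma_t\); this forces the class to vanish. This contradiction completes the proof.

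The main obstacle is making this last step rigorous. We must show that the quadratic class of \(f^*b\) genuinely vanishes on the quotient curves, using that \(\cL|_{\gamma_t}\cong\cO(-1)\) and that \(h^0(\cO(-2))=0\) forces the degree-two part to vanish. We must also check that the degenerate case \(W_C=H^0(\cO(d))\) produces a genuine incompatibility with \(\bar r^2\neq0\). A secondary difficulty is ensuring that the deformed curves do not degenerate into \(\mu\)-exceptional curves; we handle this with the uniqueness of lifts in \Cref{ass:A8}.
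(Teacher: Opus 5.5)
Your outline is the paper's proof: lift to \(\tilde T\), obtain a one-parameter family of degree-\((-1)\) quotient curves over \(P\) (your bound \(\dim_{[\tilde\gamma]}\operatorname{Hom}(\PP^1,\tilde T)\geq4\) is equivalent to the paper's \(\chi(N_{\tilde\gamma/\tilde T})=1\)), force the lines to move by \Cref{ass:A5,ass:A8}, cut the swept surface by a general slice to get a horizontal base curve \(\Gamma\) for \(W_C\), and conclude with \Cref{lem:horizontal-base}, \eqref{eq:W} and \Cref{lem:fb}. Two justifications need tightening, both along lines you already indicate. First, the curves \(\gamma_t\) lie over \(P\) because a contracted curve over any other point is disjoint from \(D\), while \(D\cdot\gamma_t=1\). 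Second, the final vanishing does not come from \(f^*b\) ``decomposing through functions in \(W\)'' (that would amount to \(b\in\fm_P^2\), which you have not shown); it comes exactly from the mechanism you flag at the end: \(f^*b\in\cI^2\) is a section of \(\cL^2=\cO_T(-2D)\) whose restriction to each \(\gamma_t\) lies in \(H^0(\cO_{\PP^1}(-2))=0\), so its quadratic initial section vanishes on the swept surface and hence on \(\Gamma\), contradicting \(\bar r^2|_\Gamma\neq0\).
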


\begin{proof}
Suppose that such a quotient exists, and let
\(\tilde\gamma\subset\tilde T\) be the lifted quotient
curve. Crepancy and \Cref{ass:A2} give
\[
K_{\tilde T}\cdot\tilde\gamma=0,\qquad
\deg N_{\tilde\gamma/\tilde T}=-2.
\]
Since the normal bundle has rank three, Riemann--Roch
 give
\[
\dim_{[\tilde\gamma]}\operatorname{Hilb}(\tilde T)
\geq h^0(N_{\tilde\gamma/\tilde T})
     -h^1(N_{\tilde\gamma/\tilde T})=1.
\]
Choose a curve of distinct Hilbert points through
\([\tilde\gamma]\). After shrinking, its members are
smooth rational curves. Each member maps constantly
to the affine base. Its intersection with \(\mu^*D\)
remains one, whereas a curve over a point other than
\(P\) is disjoint from \(\mu^*D\). Thus every member
lies over \(P\).

The image under \(\beta\circ\mu\) has \(H\)-degree one.
Since \(H|_E\) is very ample, this image is a line,
and the map to it has degree one. The image in \(T\)
is therefore a quotient section of \(D=\PP_E(M)\)
with tautological degree \(-1\).

By \Cref{ass:A5,ass:A8}, a fixed
line supports at most one lifted quotient curve.
The lines must therefore vary. The corresponding
curves in \(T\) sweep a surface \(\Sigma\subset D\)
dominating a component of \(E\).

Every section in \(W\) vanishes on \(\Sigma\). Indeed,
its restriction to each quotient curve is a section
of \(\cO_{\PP^1}(-1)\), and hence zero. A general
anticanonical slice cuts \(\Sigma\) in a curve
\(\Gamma\) dominating a component of \(C\).
Thus \(\Gamma\) is a horizontal base curve for the
restriction of \(W_C\).

If \(M|_C=\cO(1)^{\oplus2}\), then \(\dim W_C\geq3\),
contradicting \Cref{lem:horizontal-base}.
For \(M|_C=\cO\oplus\cO(2)\), that lemma and
\eqref{eq:W} show that \(\bar r|_\Gamma\) is nowhere zero.

The same conclusion holds on a two-component slice.
Indeed, \(\dim(W_C|_{C_i})\geq2\), and restriction
from \(H^0(C,M|_C)\) to \(H^0(C_i,M|_{C_i})\) is
surjective. By \Cref{lem:horizontal-base} and
\eqref{eq:W}, the restriction of \(\bar r\) is
complementary to the sections vanishing on \(\Gamma\).
Hence \(\bar r|_\Gamma\) is again nowhere zero.

On the other hand, \Cref{lem:fb} gives
\(f^*b\in\cI^2\). It defines a section of
\(\cL^2=\cO_T(-2D)\), whose restriction to every
negative quotient curve is zero. Its quadratic
initial section therefore vanishes on \(\Sigma\),
and hence on \(\Gamma\). On the slice, this section
is \(\bar r^2\). Thus
\(
\bar r^2|_\Gamma=0,
\) which yields a contradiction.
\end{proof}

\subsubsection{Reducedness of the fiber}

Once the conormal sheaf is globally generated, we
identify the scheme-theoretic fiber. In the locally
complete intersection case, the relevant statement
is \cite[Proposition 5.4]{AW1998}.

\begin{proposition}
\label{prop:aw-reducedness}
Under the contraction, locally complete intersection,
and klt blow-up hypotheses of
\Cref{lem:aw-conormal-lifting}, the conormal bundle
\(M\) is globally generated if and only if
\[
\fm_P\cO_X=\cI_{E/X}
\]
near \(E\). In this case, \(f^{-1}(P)=E\)
scheme-theoretically, and the blow-up of \(X\) along
\(E\) maps naturally to \(\operatorname{Bl}_P Y\).
\end{proposition}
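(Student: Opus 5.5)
We prove the two implications separately and then deduce the scheme-theoretic and blow-up statements. Throughout, the argument is local near \(E\) over an affine neighborhood of \(P\). By \Cref{lem:aw-conormal-lifting}, the map \(\theta:\fm_P\to H^0(E,M)\) is surjective, since \(M\) being globally generated implies in particular that it is nef.

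\textbf{(\(\Leftarrow\)).} Suppose \(\fm_P\cO_X=\cI\) near \(E\). Choose generators \(a_1,\dots,a_s\) of \(\fm_P\). Their pullbacks \(f^*a_i\) generate \(\cI\) as an \(\cO_X\)-module. Hence their classes \([f^*a_i]\in H^0(E,M)\) generate \(M=\cI/\cI^2\) as an \(\cO_E\)-module at every point. Thus \(M\) is globally generated. This direction needs no vanishing.

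\textbf{(\(\Rightarrow\)).} Suppose \(M\) is globally generated. By surjectivity of \(\theta\), the sections \([f^*a]\), \(a\in\fm_P\), generate \(M\) stalkwise. So \(J+\cI^2=\cI\), where \(J=\fm_P\cO_X\). We upgrade this to \(J=\cI\) as follows.
\begin{itemize}
\item At each point \(x\in E\), Nakayama's lemma in the local ring \(\cO_{X,x}\), applied to the finitely generated module \(\cI_x\) and the ideal \(\cI_x\subseteq\fm_x\), gives \(J_x=\cI_x\).
\item Since both \(J\) and \(\cI\) are coherent and agree at every point of \(E\), and \(J=\cI=\cO_X\) off \(f^{-1}(P)=E\), we get \(J=\cI\) near \(E\).
\end{itemize}
Note that \(\cI^c\subseteq J\) is not even needed here. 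It follows that \(f^{-1}(P)=V(J)=E\) scheme-theoretically.

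\textbf{Blow-up statement.} Since \(\fm_P\cO_X=\cI\), the inverse image of \(\fm_P\) on \(\operatorname{Bl}_EX\) is the invertible ideal \(\cO_T(-D)\). By the universal property of blowing up, \(f\circ\beta\) factors uniquely through \(\operatorname{Bl}_PY\to Y\). Concretely, the surjection \(\bigoplus_k\fm_P^k\otimes\cO_X\twoheadrightarrow\bigoplus_k\cI^k\) induces a closed embedding \(\operatorname{Proj}\bigoplus\cI^k\hookrightarrow X\times_Y\operatorname{Bl}_PY\).

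\textbf{Main obstacle.} The only nontrivial input is the surjectivity of \(\theta\). This is exactly where the lci and klt hypotheses enter, through \Cref{lem:aw-conormal-lifting}. That lemma is in turn a vanishing argument of the form \(H^1(T,\cO_T(-D))\)-type vanishing along the formal neighborhood, using \Cref{ass:A2}-style descriptions \(D=\PP_E(M)\) and \(\cL|_D=\cO_D(1)\) together with nefness. Since that lemma is quoted, the remaining steps are the Nakayama argument and the universal property above.
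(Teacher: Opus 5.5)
Your proof is correct. The paper gives no argument for this proposition. It simply imports it as \cite[Proposition 5.4]{AW1998}, just as it imports \Cref{lem:aw-conormal-lifting} from \cite[Lemma 5.3(b)]{AW1998}. Your derivation shows that, once the lifting lemma is granted, the proposition is formal:
\begin{itemize}
\item Global generation gives nefness, so $\fm_P\to H^0(E,M)$ is onto, and $\fm_P\cO_X+\cI^2=\cI$ stalkwise.
\item Nakayama in $\cO_{X,x}$ for $x\in E$ upgrades this to equality. The two ideals agree trivially off the set-theoretic fiber $E$.
\item The converse is immediate.
\item The map $\operatorname{Bl}_EX\to\operatorname{Bl}_PY$ comes from the universal property. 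Your closed embedding into $X\times_Y\operatorname{Bl}_PY$ is a clean concrete form of this.
\end{itemize}
This is the same lift-then-Nakayama mechanism the paper uses in \Cref{lem:fiber} for the non-lci cases. There, the lifting comes directly from relative Kawamata--Viehweg vanishing, and Nakayama is applied to $\cI_x/\fm_x\cI_x$ through the fibers $\beta^{-1}(x)$. Your version has the merit of isolating the surjectivity of $\theta$ as the only place where the lci and klt hypotheses enter.

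One small correction to your closing remark. The vanishing behind that surjectivity is $R^1(f\circ\beta)_*\cO_T(-2D)=0$; nefness of $M$ is what makes $-2D-K_T$ relatively nef and big. This vanishing yields the surjection $H^0(T,\cO_T(-D))\to H^0(D,\cO_D(1))=H^0(E,M)$. It is not a vanishing of $H^1$ of $\cO_T(-D)$ itself.
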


The following additional hypothesis allows the same
argument beyond the locally complete intersection case.

\begin{assumption}[A9]
\assumptionlabel{ass:A9}{A9}
The blow-up \(T=\operatorname{Bl}_E X=\PP_X(\cI)\)
is klt, \(\cL\) is \(f\circ\beta\)-nef, and
\(\cL|_D\) is globally generated.
\end{assumption}

\begin{lemma}
\label{lem:fiber}
Under \Cref{ass:A2,ass:A9},
one has \(\fm_P\cO_X=\cI\) near \(E\).
\end{lemma}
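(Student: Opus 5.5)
Under \Cref{ass:A2,ass:A9}, we want \(\fm_P\cO_X=\cI\) near \(E\); this is the non-lci analogue of \Cref{prop:aw-reducedness}. The plan is to compare \(J=\fm_P\cO_X\) with \(\cI\) on the blow-up \(T\), and to push the comparison down with relative vanishing for the composite \(\phi=f\circ\beta:T\to Y\).

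Since \(J\subseteq\cI\), it suffices to show that every section of \(\cI\) near \(E\) lies in \(J\). We proceed in three steps.

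\emph{Step 1: \(\beta_*\cL^n=\cI^n\) for \(n\geq1\) after completion, i.e. \(\cI\) is integrally closed in the relevant sense.} Since \(T=\PP_X(\cI)\) is the blow-up, \(\cL=\cI\cO_T\) is \(\beta\)-ample. For \(n\) large, Serre vanishing and \(\cO_T=\bigoplus\) give \(\beta_*\cL^n=\cI^n\). For small \(n\) we only need \(n=1\), namely \(\beta_*\cL\supseteq\cI\), together with the statement that \(H^0(T,\cL)\) near \(D\) is generated by \(\cI\). We obtain this from the formal function theorem and the identification \(\cL|_D\cong\cO_D(1)\) with \(D=\PP_E(M)\): sections of \(\cO_D(1)\) are \(H^0(E,M)\), the first-order classes.

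\emph{Step 2: relative vanishing on \(T\).} The variety \(T\) is klt by \Cref{ass:A9}, and \(K_T=\beta^*K_X+D\) by \Cref{ass:A2}. Hence
\[
\cL^{n}-K_T=-(n+1)D-\beta^*K_X .
\]
Here \(-\beta^*K_X\) is \(\phi\)-nef and big over \(Y\) (it is pulled back from an \(f\)-ample class), and \(\cL=-D\) is \(\phi\)-nef. Choosing a suitable combination, relative Kawamata--Viehweg vanishing gives \(R^q\phi_*\cL^{n}=0\) for \(q>0\), \(n\geq0\). Applying \(\phi_*\) to
\[
0\to\cL^{n+1}\to\cL^n\to\cO_D(n)\to0
\]
then yields surjectivity of \(H^0(T,\cL^n)\to H^0(D,\cO_D(n))\) over the henselization at \(P\).

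\emph{Step 3: generation.} Since \(\cL|_D\) is globally generated (\Cref{ass:A9}), it suffices to show that the images of \(f^*a\), for \(a\in\fm_P\), generate \(H^0(D,\cO_D(1))\); we then run the Nakayama argument below. Here \(\phi_*\cL\) is an ideal of \(\cO_Y\) contained in \(\fm_P\) whose pullback generates \(\cL\) along \(D\). Because \(\phi_*\cL\subseteq\phi_*\cO_T=\cO_Y\) and its sections vanish at \(P\), we get \(\phi_*\cL\subseteq\fm_P\). Global generation of \(\cL|_D\), combined with the surjection \(H^0(\phi_*\cL)\to H^0(\cO_D(1))\) from Step 2, shows that \(\fm_P\cO_T\to\cL\) is surjective along \(D\), hence near \(D\). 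Thus \(\fm_P\cO_T=\cI\cO_T\). Pushing forward, \(\beta_*(\fm_P\cO_T)\) contains \(\cI\) as the integral closure computation of Step 1, and \(J\cO_T=\cI\cO_T\) means \(\cI\) is integral over \(J\). To conclude \(J=\cI\) rather than only equality of integral closures, we use the sequence again: \(\cI/J\) is a module supported on \(E\), and the vanishing \(R^1\phi_*\cL^{2}=0\) together with surjectivity in degree one lets us lift each generator of \(\cI\) modulo \(\cI^2\) to \(J\). Then \(\cI=J+\cI^2\), and by Nakayama (applied to the local rings along \(E\), using \(\cI^c\subseteq J\)) iterating gives \(\cI=J+\cI^c=J\).

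\textbf{Main obstacle.} We expect the hardest part to be the vanishing in Step 2 with only klt and nefness rather than smoothness. The issue is arranging \(\cL^n-K_T\) as \(\phi\)-nef and big: \(-D\) is only nef, not big, over \(Y\). We would first try writing it as \(\epsilon\cL+\bigl((n+1-\epsilon)\cL-\beta^*K_X\bigr)\), using that \(-\beta^*K_X+\delta\cL\) is \(\phi\)-ample for small \(\delta\), since \(\cL\) is \(\beta\)-ample and \(-K_X\) is \(f\)-ample. This handles all \(n\geq0\) and gives the needed surjections.
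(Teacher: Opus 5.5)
The first half of your argument is the paper's. It uses the same vanishing \(R^1\phi_*\cO_T(-2D)=0\), since \(-2D-K_T=\beta^*H-3D\) is \(\phi\)-nef and \(\phi\)-big on the klt \(T\). It gets the same surjection \(H^0(T,\cL)\to H^0(D,\cL|_D)\), and notes as the paper does that the lifted sections are pullbacks of functions in \(\fm_P\). So you correctly reach \(\fm_P\cO_T=\cI\cO_T\) near \(D\).

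The gap is in descending from \(T\) to \(X\). You rightly observe that \(J\cO_T=\cI\cO_T\) only makes \(J\) a reduction of \(\cI\). Your remedy is that \(R^1\phi_*\cL^2=0\) together with degree-one surjectivity lets you lift every generator of \(\cI/\cI^2\) into \(J\). This gives nothing beyond what you already had: those two facts were used up in producing the surjection onto \(H^0(D,\cL|_D)\). You never explain how base-point-freeness of \(\cL|_D\) yields generation of \(M=\cI/\cI^2\). For the exceptional divisor of a mere blow-up it does not. In \(\C[x,y,z,w]\), the ideal \(J=(xz,\,yw,\,xw+yz)\) is a proper reduction of the radical ideal \((x,y)\cap(z,w)=(xz,xw,yz,yw)\). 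Its generators have no common zero on the blow-up fiber over the origin, which is the quadric surface in \(\PP(\cI/\fm\cI)\cong\PP^3\) cut out by \((xz)(yw)=(xw)(yz)\). Yet they span only a three-dimensional subspace of the four-dimensional \(\cI/\fm\cI\).

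The missing input is the fiberwise consequence of the hypothesis \(T=\PP_X(\cI)\) in \Cref{ass:A2}, as opposed to \(T\) being merely \(\operatorname{Bl}_E X\). For \(x\in E\) it gives \(\beta^{-1}(x)=\PP(\cI_x/\fm_x\cI_x)\), with \(\cL\) restricting to \(\cO(1)\). The lifted sections \(f^*a_j\) have no common zero on this full projective space, so their classes span \(\cI_x/\fm_x\cI_x\). Nakayama then gives \(\cI_x=(f^*a_j)\subseteq\fm_P\cO_{X,x}\subseteq\cI_x\). This is exactly how the paper concludes.

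With this step, your Step 1 and the integral-closure detour become unnecessary. Step 1's identification \(H^0(D,\cO_D(1))=H^0(E,M)\) is in any case unjustified when \(M\) is not locally free, as in cases C3 and C5, though nothing essential rests on it.
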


\begin{proof}
Set \(p=f\circ\beta\). Since
\[
-2D-K_T=\beta^*H+3(-D),
\]
this divisor is \(p\)-nef and big. Indeed, \(-D\)
is \(\beta\)-ample and \(H\) is \(f\)-ample, so
\(\beta^*H+\varepsilon(-D)\) is \(p\)-ample for
sufficiently small \(\varepsilon>0\).
Relative Kawamata--Viehweg vanishing gives
\[
R^1p_*\cO_T(-2D)=0.
\]
As \(Y\) is affine, the divisor sequence
\[
0\to\cO_T(-2D)\to\cO_T(-D)\to\cO_D(-D)\to0
\]
therefore gives a surjection
\[
H^0(T,\cL)\to H^0(D,\cL|_D).
\]

Choose generators of \(\cL|_D\) and lift them to
sections of \(\cL\). Through \(\cL\subset\cO_T\),
these are regular functions on \(T\).
Since \(p_*\cO_T=\cO_Y\), they are pullbacks of
functions \(a_j\) on \(Y\). Their vanishing on
\(D\) implies \(a_j\in\fm_P\).

For \(x\in E\), the equality \(T=\PP_X(\cI)\) gives
\[
\beta^{-1}(x)=\PP(\cI_x/\fm_x\cI_x),
\qquad
\cL|_{\beta^{-1}(x)}\cong\cO(1).
\]
The classes of \(f^*a_j\) have no common base point
on this projective space. They therefore span
\(\cI_x/\fm_x\cI_x\). Nakayama's lemma gives
\[
\cI_x=(f^*a_j)_j
\subseteq\fm_P\cO_{X,x}\subseteq\cI_x.
\]
Thus \(\fm_P\cO_X=\cI\) near \(E\), as required.
\end{proof}

\subsection{C2 case}

We begin with type C2 in \cite[Theorem 0.8]{Kachi1997}. Here \(E = A \cup_L B\) is the reduced union of two planes meeting along a line, with \(H\) of degree \(1\) on each plane. We will prove that the fiber is reduced and the target is smooth at \(P\).

The polarized surface is the reduced quadric in \(\PP^3\), so \Cref{ass:A1} is satisfied.
In the C2 case, we write \(\cO_E(t)=\cO_E(tH)\) and \(\cO_{E_n}(t)=\cO_X(tH)|_{E_n}\). In particular, \(\cO_{E_n}(-1)\cong\omega_X|_{E_n}\). The surface is Gorenstein with \(\omega_E\cong\cO_E(-2)\). A codimension-two Gorenstein subscheme of a regular scheme is a complete intersection \cite[Chapters 20--21]{Eisenbud1995}. Therefore
\(M=\cI/\cI^2\) is locally free of rank \(2\), \(\det M\cong\cO_E(1)\), and \(N\cong M^\vee\cong M(-1)\). Along the double line, local coordinates can be chosen so that
\[\cI = (z, xy), \quad \cI_{A/X} = (z, x), \cI_{B/X} = (z, y).\]
A regular-sequence ideal is of linear type, so \(T=\PP_X(\cI)\), \(D=\PP_E(M)\), and \(\cL|_D\cong\cO_D(1)\). In the local model above, the blow-up has equation \(xy=zu\), and thus it is normal and Gorenstein, with transverse threefold ordinary double points. The codimension-two blow-up formula gives \(K_T=\beta^*K_X+D\). Therefore \Cref{ass:A2} holds.

The polarization is very ample. Relative freeness and Bertini's theorem give a general \(D_h\) that is smooth near \(E\). The intersection \(C\) consists of two copies of \(\PP^1\) meeting at one point of \(L\). The embedding is regular, and the exact sequence
\[0 \to M \to \cO_{E_2} \to \cO_E \to 0, \]
with \(M\) locally free on \(E\), shows that \(\cO_{E_2}\) is also Cohen--Macaulay. A section avoiding both planes is therefore a nonzerodivisor on \(\cO_E\) and \(\cO_{E_2}\). These conditions are open, so \Cref{ass:A3} holds.

Since the image of the anticanonical system on either plane is \(H^0(\PP^2,\cO(1))\), every prescribed line, including \(L\), is contained in a suitable slice \(D_{h'}\) that contains neither plane. Such a section gives a plane conic \(C'=E\cap D_{h'}\), possibly a double line, and hence \(H^0(\cO_{C'})=\C\). The nonzerodivisor assertion follows as above. Finally, \(H^1(\cO_{E_2\cap D_{h'}})=0\) follows from \Cref{lem:aw-slice-vanishing} with \(F'=E_2\). Therefore \Cref{ass:A4} holds.

Now we check \Cref{ass:A5}.

\begin{lemma}
\label{lem:c2-a5}
    On every line \(l\) in either plane, including \(L\),
    \[M|_l\cong\cO \oplus \cO(1) \text{ or } \cO(-1) \oplus \cO(2).\]
\end{lemma}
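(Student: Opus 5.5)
The plan is to pin down the splitting type of \(M|_l\) using two pieces of information: the degree of \(M|_l\), which comes from \(\det M\), and a vanishing of \(H^1\), which comes from a slice containing \(l\). Since \(M\) is locally free of rank two with \(\det M\cong\cO_E(1)\), and \(l\) has \(H\)-degree one, Grothendieck's theorem gives
\[
M|_l\cong\cO_l(a)\oplus\cO_l(1-a),\qquad a\geq1-a .
\]
It therefore suffices to show \(H^1(l,M|_l)=0\). This forces \(1-a\geq-1\), so \(a\in\{1,2\}\), which are exactly the two listed types.

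To get the vanishing, take the anticanonical slice \(D_{h'}\) containing \(l\) provided by \Cref{ass:A4}, which we have just verified in the C2 case. Concretely, on \(E\subset\PP^3\), the section \(h'\) is a hyperplane through \(l\) different from both planes. If \(l\neq L\), say \(l\subset A\), then \(C'=E\cap D_{h'}\) is \(l\) together with a line of \(B\). If \(l=L\), then \(C'\) is the double line on \(L\) inside that hyperplane. In either case \(C'\) is a plane conic containing \(l\), and \(H^0(\cO_{C'})=\C\).

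Because \(h'\) is a nonzerodivisor on \(\cO_E\) and \(\cO_{E_2}\), and \(M\) is locally free, we have the exact sequence
\[
0\to M|_{C'}\to\cO_{E_2\cap D_{h'}}\to\cO_{C'}\to0 .
\]
This is the same sequence used after \Cref{ass:A4} for the general slice. Constants lift from \(\cO_{C'}\), and \(H^1(\cO_{E_2\cap D_{h'}})=0\). The long exact sequence then yields \(H^1(C',M|_{C'})=0\).

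Next, \(l\subset C'\) is a closed subscheme, so \(\cO_{C'}\twoheadrightarrow\cO_l\). Tensoring with the locally free sheaf \(M\) gives a surjection \(M|_{C'}\twoheadrightarrow M|_l\) whose kernel is supported in dimension at most one. Since \(H^2\) vanishes on a curve, the map \(H^1(C',M|_{C'})\to H^1(l,M|_l)\) is surjective, and hence \(H^1(l,M|_l)=0\).

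The only delicate point is the case \(l=L\), where \(C'\) is nonreduced. I expect this to pose no real difficulty. The key inputs are that \(L\) lies in some slice avoiding both planes, and that the nonzerodivisor property holds there. Both were checked when verifying \Cref{ass:A4}, because \(\cO_{E_2}\) is Cohen--Macaulay and \(h'\) contains neither plane. Beyond that, the surjectivity of \(H^1\) for a quotient on a one-dimensional scheme requires no reducedness.
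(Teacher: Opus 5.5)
Your proposal is correct and matches the paper's proof: degree one from \(\det M\cong\cO_E(1)\), the vanishing \(H^1(C',M|_{C'})=0\) on a slice through \(l\) from the first-neighborhood sequence, and the surjection onto \(H^1(l,M|_l)\), which forces the splitting type \((0,1)\) or \((-1,2)\). You spell out a few steps the paper leaves implicit, namely the explicit conic \(C'\) (a double line when \(l=L\)) and why \(H^1\) surjects for a quotient on a curve, but the route is the same.
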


\begin{proof}
    Choose the auxiliary slice \(C' = E \cap D_{h'}\) through \(l\). We have \(H^1(C', M|_{C'}) = 0\) and its quotient on \(l\) gives
    \(H^1(l, M|_l) = 0.\)
    Since \(M|_l\cong\cO(a) \oplus \cO(1-a)\), the conclusion then follows.
\end{proof}

Every \(H\)-degree-one line lies in one of the two planes. By \Cref{lem:c2-a5}, \(M|_l\) has either no negative quotient or a unique quotient to \(\cO_l(-1)\), which verifies \Cref{ass:A5}.

To pass from splitting types on lines to the bundle on a plane, we use the following result.
\begin{lemma}[{\cite[Lemma 2.7]{AW1998}}]
\label{lem:uniform-plane-bundles}
A rank-two vector bundle on \(\PP^2\) with the same splitting type on every line is either a direct sum of line bundles or a twist of \(T_{\PP^2}\). In particular, splitting \((-1,2)\) on every line forces the bundle to be \(\cO(-1)\oplus\cO(2)\), while splitting \((0,1)\) forces it to be \(\cO\oplus\cO(1)\) or \(T_{\PP^2}(-1)\).
\end{lemma}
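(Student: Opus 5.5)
The statement is the classical description of uniform rank-two bundles on \(\PP^2\) (Van de Ven's theorem). I would prove it through the incidence correspondence. Let \(F=\{(x,\ell):x\in\ell\}\subset\PP^2\times\check\PP^2\), with projections \(p:F\to\PP^2\) and \(q:F\to\check\PP^2\). Here \(q\) is a \(\PP^1\)-bundle whose fibers are the lines, and \(p\) is a \(\PP^1\)-bundle whose fiber over \(x\) is the pencil of lines through \(x\). Let \(V\) have splitting type \((a,b)\), \(a\le b\), on every line, and twist so that \(a=0\).

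\emph{Case \(b=0\).} Since \(V\) is trivial on every line, \(q_*p^*V\) is a rank-two bundle \(W\) on \(\check\PP^2\) and \(p^*V\cong q^*W\). Restricting to a fiber of \(p\), which maps isomorphically onto a line of \(\check\PP^2\), shows that \(W\) is trivial on every line of \(\check\PP^2\). Hence \(h^0(W)\) is maximal, so \(W\) and therefore \(V\) are trivial. Alternatively, pushing forward along \(p\) recovers \(V\cong p_*q^*W\), and a direct argument with sections gives triviality.

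\emph{Case \(b\ge1\).} On each line the summand \(\cO(b)\subset V|_\ell\) is canonical: it is the image of \(H^0(V(-b)|_\ell)\otimes\cO(b)\). Because \(h^0(V(-b)|_\ell)=1\) is constant, \(q_*p^*V(-b)\) is a line bundle \(\Lambda\) on \(\check\PP^2\), and we obtain a subbundle \(S=q^*\Lambda\otimes p^*\cO(b)\subset p^*V\) with locally free quotient. The central question is whether \(S\) descends along \(p\), that is, whether it is constant on the fibers of \(p\).

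Restrict to a fiber \(p^{-1}(x)\cong\PP^1\). There \(S\) gives a map from the pencil of lines through \(x\) to \(\PP(V_x)\cong\PP^1\). Its degree is computed from \(\deg\Lambda|_{\text{line}}\), and by a Chern class computation on \(F\) it is controlled by \(b\).
\begin{itemize}[label=--]
    \item If \(b\ge2\), I expect the map to be forced constant. If it were nonconstant, the line subbundle \(S|_{p^{-1}(x)}\subset V_x\otimes\cO\) would have negative degree, while a degree count using \(c_1(V)=b\) and the constancy of the type gives a contradiction.
    \item If \(b=1\), either the map is constant, or it has degree one and is an isomorphism.
\end{itemize}

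If the map is constant, \(S\) descends to a line subbundle \(\cO(b)\subset V\) with quotient \(\cO\). The resulting extension splits because \(\Ext^1(\cO,\cO(b))=H^1(\PP^2,\cO(b))=0\). After undoing the twist, this gives \(\cO(-1)\oplus\cO(2)\) for type \((-1,2)\) and \(\cO\oplus\cO(1)\) for type \((0,1)\).

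In the remaining case, \(b=1\) with an isomorphism on every pencil, the subline of \(V_x\) is the one attached to each line through \(x\). This identifies \(\PP(V_x)\) with the pencil of lines through \(x\), that is, with \(\PP(T_{\PP^2,x})\). I would then show that \(V\cong T_{\PP^2}(-1)\) up to twist, normalized to type \((0,1)\). To do this, compare \(V\) with \(T_{\PP^2}(-1)\): both have type \((0,1)\), and in both the canonical subline on \(\ell\) is the tangent direction of \(\ell\). The comparison should yield a nonzero map \(T_{\PP^2}(-1)\to V\) which is an isomorphism on every line, hence an isomorphism. Equivalently, \(h^0(V)=3\) and the evaluation map gives \(0\to\cO(-1)\to\cO^3\to V\to0\), the Euler sequence.

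The main obstacle is the degree computation showing that the map \(p^{-1}(x)\to\PP(V_x)\) is constant for \(b\ge2\) and of degree at most one for \(b=1\), together with the identification with \(T_{\PP^2}(-1)\) in the nonconstant case. If the Chern class bookkeeping on \(F\) becomes messy, I would instead cite \cite[Lemma 2.7]{AW1998} and Van de Ven's theorem directly, since the paper only uses the two displayed consequences.
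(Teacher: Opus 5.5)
The paper gives no proof of this lemma. It quotes \cite[Lemma 2.7]{AW1998}, which is Van de Ven's theorem on uniform rank-two bundles, so your fallback of citing it is exactly what the paper does. Your direct argument through the flag variety \(F\) follows the standard Okonek--Schneider--Spindler route. As written, however, it has a genuine gap at \(b\ge2\). That case is exactly what the \((-1,2)\) assertion needs, since it becomes type \((0,3)\) after twisting, and the paper uses that assertion in \Cref{lem:c2-pos}.

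The Chern class bookkeeping you propose cannot force the map \(p^{-1}(x)\to\PP(V_x)\) to be constant. Let \(h_1,h_2\) be the pulled-back hyperplane classes, so that \(h_1^2-h_1h_2+h_2^2=0\) in \(H^4(F)\). Write \(\Lambda\cong\cO_{\check\PP^2}(-k)\), where \(k\) is the degree of that map. Then \(c_1(S)=bh_1-kh_2\) and \(c_1(p^*V/S)=kh_2\), and the identity \(c_2(p^*V)=c_2(V)h_1^2\) yields only
\[
k(b-k)=0,\qquad c_2(V)=k^2 .
\]
This settles \(b=1\), where \(k\in\{0,1\}\), but leaves \(k=b\) open for every \(b\ge2\); the numerics alone do not exclude it.

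The missing idea is the second fundamental form. Using \(T_{F/\PP^2}\cong\cO_F(-h_1+2h_2)\), the differentials of the maps \(p^{-1}(x)\to\PP(V_x)\) glue to a global section of
\[
\cHom\bigl(T_{F/\PP^2},\cHom(S,p^*V/S)\bigr)\cong\cO_F\bigl((1-b)h_1+(2k-2)h_2\bigr).
\]
On a fiber \(q^{-1}(\ell)\cong\ell\) this section lies in \(H^0(\cO_\ell(1-b))\), so it vanishes identically when \(b\ge2\). Hence every such map is constant, \(k=0\), and \(S\) descends. This is the descent criterion the paper records as \Cref{prop:cf-descent}.

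There are also two smaller issues.

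\emph{The case \(b=0\) is circular.} The step ``\(W\) is trivial on every line of \(\check\PP^2\), hence \(h^0(W)\) is maximal'' is the statement being proved, moved to the dual plane. You can fix this inside your own framework. The classes \(h_1^2\) and \(h_2^2\) are independent in \(H^4(F)\), so \(c_2(V)h_1^2=c_2(p^*V)=c_2(q^*W)=c_2(W)h_2^2\) forces \(c_2(V)=0\). Riemann--Roch and \(h^2(V)=0\) then give \(h^0(V)\ge2\). A section vanishing at a point vanishes on every line through it, so two independent sections trivialize \(V\). Alternatively, blow up a point and use the exceptional curve as a section of the ruling that is contracted to a point.

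\emph{The nonconstant \(b=1\) branch is incomplete.} You never construct the comparison map \(T_{\PP^2}(-1)\to V\), but your ``equivalently'' route works once you record \(c_2(V)=k^2=1\). Then \(\chi(V)=3\) and \(h^2(V)=0\). Also \(h^0(V(-1))=0\): a nonzero section would be nowhere vanishing on every line, making \(\cO(1)\) a subbundle and forcing \(V\cong\cO\oplus\cO(1)\) with \(c_2=0\). Hence \(H^0(V)\to H^0(V|_\ell)\) is an isomorphism for every line. The evaluation map is therefore surjective with kernel \(\cO(-1)\), and dualizing recovers the Euler sequence.
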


We now rule out \((-1, 2)\) splitting for general lines.

\begin{lemma}
\label{lem:c2-pos}
    On a general line of each plane, \(M\) has splitting type \((0,1)\).
\end{lemma}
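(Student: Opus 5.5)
The plan is to argue by contradiction, combining semicontinuity of splitting types with \Cref{lem:uniform-plane-bundles} and the deformation bound of \Cref{lem:deform}. Fix one plane, say \(A\), and suppose that a general line \(l\subset A\) has \(M|_l\cong\cO(-1)\oplus\cO(2)\).

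By \Cref{lem:c2-a5}, every line of \(A\) has splitting type \((0,1)\) or \((-1,2)\). The locus of lines with type \((-1,2)\) is the locus where \(h^0(M|_l(-1))\geq 2\) rather than \(1\). By upper semicontinuity on the dual plane this locus is closed. If it contains a general line, it is the whole dual plane, so \(M|_A\) is uniform of type \((-1,2)\). \Cref{lem:uniform-plane-bundles} then gives \(M|_A\cong\cO_A(-1)\oplus\cO_A(2)\).

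Next I would compute \(\chi(E,N)\), where \(N=M^\vee\cong M(-1)\), using the Mayer--Vietoris sequence for \(E=A\cup_L B\). Since \(M\) is locally free, tensoring \(0\to\cO_E\to\cO_A\oplus\cO_B\to\cO_L\to0\) with \(N\) gives
\[
0\to N\to N|_A\oplus N|_B\to N|_L\to0,
\qquad
\chi(N)=\chi(N|_A)+\chi(N|_B)-\chi(N|_L).
\]
The three terms are as follows.
\begin{enumerate}[label=(\roman*)]
\item On \(A\), \(N|_A\cong\cO(-2)\oplus\cO(1)\), so \(\chi(N|_A)=0+3=3\).
\item On \(B\), either the general line has type \((-1,2)\) as well, giving \(\chi(N|_B)=3\), or \(M|_B\) is uniform of type \((0,1)\). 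In the latter case \Cref{lem:uniform-plane-bundles} gives \(M|_B\cong\cO\oplus\cO(1)\) or \(T_{\PP^2}(-1)\). Then \(N|_B\cong\cO(-1)\oplus\cO\), with \(\chi=1\), or \(N|_B\cong T_{\PP^2}(-2)\), which is acyclic by Bott vanishing, with \(\chi=0\).
\item On \(L\), \(N|_L\) is a rank-two bundle of degree \(-1\) on \(\PP^1\), so \(\chi(N|_L)=-1+2=1\) regardless of its splitting.
\end{enumerate}
Hence \(\chi(N)\geq 3+0-1=2>0\).

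Since \(E\) is a surface, \(h^2(E,N)\geq 0\), and therefore \(h^0(N)-h^1(N)\geq\chi(N)\geq 2\). The cohomological hypotheses of \Cref{lem:c-exc} hold by \Cref{ass:A1} and \eqref{eq:vanishing}, so \Cref{lem:deform} applies and gives \(h^0(N)-h^1(N)\leq0\), a contradiction. The same argument applies to \(B\) with the roles of the planes exchanged.

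The step I expect to need the most care is the semicontinuity step. I must make sure that the family of restrictions \(M|_l\) over the dual plane is flat, which holds because \(M|_A\) is locally free. I must also check that the jumping type \((-1,2)\) is exactly the closed condition \(h^0(M|_l(-1))\geq2\); this is where \Cref{lem:c2-a5}, which excludes all more unbalanced types, is essential. A secondary point is the identification \(N\cong M(-1)\) on each component used in the Euler characteristic count; this follows from \(\det M\cong\cO_E(1)\) for a rank-two bundle.
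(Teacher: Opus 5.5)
There is a genuine gap in your case analysis for \(B\), and the final inequality is also reversed.

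\textbf{The sign.} Since \(h^0(N)-h^1(N)=\chi(N)-h^2(N)\le\chi(N)\), positivity of \(\chi(N)\) gives nothing until you control \(h^2(E,N)\cong H^0(E,M(-2))^\vee\). This uses Serre duality on the Gorenstein surface \(E\) with \(\omega_E\cong\cO_E(-2)\). This part is repairable. A section of \(M(-2)\) vanishes on any plane whose general line has type \((0,1)\). It therefore vanishes along \(L\) on the other plane, and is zero there as well. So \(h^2(E,N)=0\) if only \(A\) is negative. If both planes are negative, \(h^2(E,N)=1\), which still leaves \(h^0-h^1=4\).

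\textbf{The dichotomy in (ii).} Semicontinuity makes the \((-1,2)\)-locus closed. So a general line of type \((0,1)\) on \(B\) only tells you that the jumping lines form a proper closed subset of the dual plane. It does not make \(M|_B\) uniform, and \Cref{lem:uniform-plane-bundles} does not apply. In fact your uniform alternative never occurs: \(M|_L\cong(M|_A)|_L\cong\cO_L(-1)\oplus\cO_L(2)\), so \(L\) is always a jumping line of \(M|_B\).

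In the case that actually remains (\(A\) negative, \(B\) generically of type \((0,1)\), non-uniform), Riemann--Roch gives \(\chi(N|_B)=1-c_2(M|_B)\), hence \(\chi(N)=3-c_2(M|_B)\). Nothing you have established forces \(c_2(M|_B)\le2\). The constraints of \Cref{lem:c2-a5} only bound the jump on each individual line. Combined with \Cref{lem:deform}, you only learn \(c_2(M|_B)\ge3\), which is not a contradiction.

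\textbf{How the paper avoids this.} The paper never needs the other plane. It blows up the other plane, say \(B\). The strict transform \(\hat A\cong\PP^2\) of the negative plane then has conormal bundle \(M|_A(1)\cong\cO\oplus\cO(3)\). This holds because \(\cI\) pulls back to \(\cO(-F)\,\cI_{\hat A}\) and the exceptional divisor \(F\) of \(\operatorname{Bl}_B X\) meets \(\hat A\) along \(L\). Hence \(N_{\hat A}\cong\cO\oplus\cO(-3)\) has \(h^0=1\) and \(h^1=0\). So \(\hat A\) moves in a nonconstant family of surfaces lying over \(P\), contradicting isolation of the surface fiber. This argument uses only \(M\) on the negative plane, which is exactly the information your Euler-characteristic count lacks.
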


\begin{proof}
    Suppose the second splitting were generic on \(B\). The function \(h^0(l,M|_l(-2))\) is generically one, and then upper semicontinuity and the two-element splitting list force it to be one on every line. Then \Cref{lem:uniform-plane-bundles} gives \(M|_B\cong\cO_B(-1)\oplus\cO_B(2)\).

    Blow up \(A\), and call \(\hat{B}\) the strict transform of \(B\). It is still \(\PP^2\) and in the \(x\)-chart, write \(z = xa\), we have
    \(\cI \cO_{\operatorname{Bl}_AX} = x(a, y)\)
    and \(\hat{B}=(a,y)\). Dividing the ideal by the exceptional equation gives
    \[N^\vee_{\hat{B}/\operatorname{Bl}_AX}\cong M|_B(1)\cong\cO_B\oplus\cO_B(3).\]
    Thus the normal bundle is \(\cO_B\oplus\cO_B(-3)\), with \(h^0=1\) and \(h^1=0\). Hence \(\hat B\) deforms in a nonconstant smooth one-dimensional Hilbert family. Nearby surfaces are irreducible and are not contained in the exceptional divisor. They map to points of the affine base and must remain over \(P\), the isolated point with a two-dimensional fiber. Every such surface must therefore equal \(\hat B\), a contradiction. The same argument applies to \(A\).
\end{proof}

We now verify \Cref{ass:A6}. Choose a general slice \(C=C_A\cup C_B\). By \Cref{lem:c2-pos}, both component restrictions of \(M|_C\) are \(\cO\oplus\cO(1)\). Gluing sections with prescribed values at the node shows that the bundle is globally generated, hence nef. The slice is a crepant contraction of the smooth threefold \(D_h\) and its reduced fiber is locally complete intersection, and its blow-up has only an ordinary double point over the node. Therefore \Cref{lem:aw-conormal-lifting}, together with \cite[Proposition 5.6.3]{AW1998}, gives the surjectivity of \(\fm_{z_P}\to H^0(C,M|_C)\).

\begin{lemma}
    For \(W = \im(\fm_P \to H^0(E, M))\), one has \(\dim W \geq 3\).
\end{lemma}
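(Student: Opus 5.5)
Since \(W_C\) is by definition the image of \(W\) under restriction \(H^0(E,M)\to H^0(C,M|_C)\), we have \(\dim W\geq\dim W_C\). The proof therefore reduces to the bound \(\dim W_C\geq3\). That bound comes from \eqref{eq:W}, once we know \(h^0(C,M|_C)=4\).

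First, we compute \(h^0(C,M|_C)\) on the general two-component slice \(C=C_A\cup C_B\). By \Cref{lem:c2-pos}, both \(M|_{C_A}\) and \(M|_{C_B}\) are \(\cO\oplus\cO(1)\), so each component carries three sections. The normalization sequence
\[
0\to M|_C\to M|_{C_A}\oplus M|_{C_B}\to M|_{p}\to0
\]
at the node \(p\) has surjective last map, since each factor is globally generated. Hence \(h^0(C,M|_C)=3+3-2=4\) and \(h^1=0\), consistent with \Cref{ass:A4}.

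Second, we use the surjectivity in \Cref{ass:A6}, established in the preceding paragraph. Every function on \(Z\) vanishing at \(z_P\) has the form \(a+cr\) with \(a\in\fm_P\). Its first-order class on \(C\) is \([f^*a]|_C+c(P)\bar r\). Thus \(H^0(C,M|_C)=W_C+\C\bar r\), which is \eqref{eq:W}. Comparing dimensions gives \(\dim W_C\geq4-1=3\).

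Third, we check that the class \([f^*a]\in H^0(E,M)\) restricts to the class on \(C\) used above. This holds because \(h\) is a nonzerodivisor on \(\cO_{E_2}\) (\Cref{ass:A3}), which identifies the conormal sheaf of \(C\subset D_h\) with \(M|_C\), compatibly with restricting functions. Hence \(W_C\) is exactly the image of \(W\), and \(\dim W\geq\dim W_C\geq3\).

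The only delicate point is the surjectivity statement in \Cref{ass:A6}, that is, the application of \Cref{lem:aw-conormal-lifting} on the slice. It needs nefness of \(M|_C\) and the klt property of the blown-up threefold with its ordinary double point. Both were verified just before the statement, so the remaining argument is the dimension count above.
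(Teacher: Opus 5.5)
Your proposal is correct and follows the paper's proof. You compute \(h^0(C,M|_C)=3+3-2=4\) from the normalization sequence at the node, using the \((0,1)\) splitting of \Cref{lem:c2-pos}, then deduce \(\dim W_C\geq3\) from \Cref{ass:A6} and \eqref{eq:W}, and conclude via the surjection \(W\to W_C\). Your extra compatibility check, via the nonzerodivisor condition in \Cref{ass:A3}, is only implicit in the paper's definition of \(W_C\).
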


\begin{proof}
    The normalization sequence identifies \(H^0(C,M|_C)\) with the kernel of the surjective difference-of-values map
    \[H^0(C_A,M|_{C_A})\oplus H^0(C_B,M|_{C_B})\longrightarrow M_p,\]
    where \(p=C_A\cap C_B\). Each component contributes three sections and \(\dim M_p=2\), so \(h^0(C,M|_C)=3+3-2=4\). By \Cref{ass:A6} and \eqref{eq:W}, the quotient \(H^0(C,M|_C)/W_C\) is generated by the single class of \(\bar r\), hence \(\dim W_C\geq3\). Finally, \(W\to W_C\) is surjective by definition, so \(\dim W\geq\dim W_C\geq3\).
\end{proof}

\begin{lemma}
    For \(k \geq 0\) and \(t \geq -1\),
    \(H^2(E, \Sym^k M(t)) = 0.\)
    Moreover, for every \(n \geq 1\),
    \(H^1(E_n, \cO_{E_n}) = 0\)
    and
    \(h^1(E, N) \leq 1.\)
\end{lemma}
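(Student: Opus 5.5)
We work in the C2 setting, where \(E=A\cup_L B\), \(M\) is locally free of rank two with \(\det M\cong\cO_E(1)\), and \(\omega_E\cong\cO_E(-2)\). The plan has three parts, in the order stated.

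\textbf{The vanishing \(H^2(E,\Sym^kM(t))=0\).} We reduce to the two planes. The sequence
\[
0\to\cO_A(-1)\to\cO_E\to\cO_B\to0
\]
(the kernel is the ideal of \(B\) in \(E\), i.e.\ \(\cO_A(-L)\)) gives, after tensoring with the locally free sheaf \(\Sym^kM(t)\),
\[
0\to\Sym^kM|_A(t-1)\to\Sym^kM(t)\to\Sym^kM|_B(t)\to0 .
\]
It therefore suffices to show \(H^2(\PP^2,\Sym^kM|_A(s))=0\) for \(s\geq-2\), and the same on \(B\). By \Cref{lem:c2-pos} and \Cref{lem:c2-a5}, the splitting type on every line is \((0,1)\) except possibly on finitely many jumping lines. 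We want the uniform statement, so we first rule out jumping lines, following the argument of \Cref{lem:c2-pos}. A jumping line has type \((-1,2)\), so \(h^0(M|_l(-2))\) would jump; the goal is to show \(M|_A\) is uniform of type \((0,1)\). Granting this, \Cref{lem:uniform-plane-bundles} gives \(M|_A\cong\cO\oplus\cO(1)\) or \(T_{\PP^2}(-1)\). In both cases \(\Sym^kM|_A\) is a quotient of a direct sum of \(\cO(j)\) with \(j\geq0\); for \(T(-1)\) this follows from the Euler sequence, since \(\Sym^k\) of \(\cO^3\to T(-1)\) is surjective. Serre duality then gives \(H^2(\Sym^kM|_A(s))=H^0((\Sym^kM|_A)^\vee(-3-s))^\vee\). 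This vanishes because \((\Sym^kM|_A)^\vee\) embeds in a sum of \(\cO(-j)\) with \(j\geq0\), and \(-3-s\leq-1\).

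If uniformity cannot be proved, the fallback is to use only \(H^1(M|_l)=0\) on every line. This gives \(M|_A(s)\) with \(H^2=0\) via restriction to a line together with Castelnuovo--Mumford-type induction, \(0\to F(s-1)\to F(s)\to F|_l(s)\to0\), and \(H^2(F(s))=0\) for \(s\gg0\). Descending induction on \(s\) requires \(H^1(F|_l(s))=0\) for all \(s\geq-2\). Since \(\Sym^kM|_l\) has summands of degree \(\geq -k\) only in the \((-1,2)\) case, this shows why uniformity is really needed. I expect this step, excluding isolated jumping lines, to be the main obstacle.

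\textbf{The vanishing \(H^1(E_n,\cO_{E_n})=0\).} We induct on \(n\) using
\[
0\to\cI^n/\cI^{n+1}\to\cO_{E_{n+1}}\to\cO_{E_n}\to0 .
\]
Because \(\cI\) is locally generated by a regular sequence, \(\cI^n/\cI^{n+1}\cong\Sym^nM\). It then suffices to have \(H^1(E,\Sym^nM)=0\) for \(n\geq0\) (the case \(n=0\) is \eqref{eq:vanishing}). Run the same plane decomposition, now for \(H^1\): on \(\PP^2\), both \(\Sym^n(\cO\oplus\cO(1))\) and \(\Sym^nT(-1)\) twisted by \(s\geq-1\) have vanishing \(H^1\), using the Euler sequence and Bott vanishing. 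The \(H^0\) connecting maps into the \(A\)-part twisted by \(-1\) are harmless because that \(H^1\) vanishes as well.

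\textbf{The bound \(h^1(E,N)\leq1\).} Here \(N\cong M(-1)\). Take the slice sequence \(0\to M(-2)\to M(-1)\to M(-1)|_C\to0\), which is \(\cdot h\) with \(H\). On each component of \(C\), \(M(-1)|_{C_i}\cong\cO(-1)\oplus\cO\). This gives \(h^1(C,M(-1)|_C)\leq1\): the gluing at the node has \(H^0\) rank \(1+1\) against \(\dim M_p=2\), so the cokernel has dimension at most one. Next, \(H^2(E,M(-2))\) is Serre dual to \(H^0(M^\vee(0))=H^0(M(-1))\), which is \(H^0(N)\). Instead, use \(H^1(E,M(-2))\), which injects into a term controlled by the first part. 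Concretely, it suffices to show \(H^1(E,M(-2))\to H^1(E,M(-1))\) is zero, or that \(H^1(M(-2))=0\), via the plane decomposition, where \(M|_A(-3)\) and \(M|_B(-2)\) have \(H^1=0\) by the uniform descriptions. Then \(h^1(N)\leq h^1(N|_C)\leq1\).
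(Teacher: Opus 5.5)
\textbf{There is a genuine gap.} Everything except your fallback, which you yourself say is inconclusive, assumes that \(M|_A\) and \(M|_B\) are uniform of type \((0,1)\). That is not available at this point. In the paper, lines of type \((-1,2)\) are excluded only by \Cref{prop:non-neg}. That proposition requires \Cref{ass:A7}, and \Cref{ass:A7} is proved from this very lemma, via \(h^1(E,N)\le 1\) and \(H^1(E_3,\cO_{E_3})=0\). So your plan is circular. In particular, your second part cannot be run here, even though \(H^1(E,\Sym^kM)=0\) does hold once \(M\cong q^*T_{\PP^2}(-1)\) is known.

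For the \(H^2\)-vanishing, uniformity is not needed. Since \(\omega_E\cong\cO_E(-2)\), Serre duality identifies \(H^2(E,\Sym^kM(t))\) with the dual of \(H^0(E,(\Sym^kM)^\vee(-t-2))\). By \Cref{lem:c2-pos}, for \(t\ge -1\) this bundle restricts to a general line of either plane as a sum of line bundles of degree at most \(-1\). A section therefore vanishes on a covering family of lines and is zero. Your plane-by-plane Serre duality works the same way if you use the general line instead of a uniform description.

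\textbf{The missing idea for the other two assertions} is that they come from \(f\), not from \(M\) alone. Apply the theorem on formal functions to \(R^1f_*\cO_X=0\) and \(R^1f_*\omega_X\cong\omega_Y\) from \eqref{eq:aw}. Use \(\cO_{E_n}(-1)\cong\omega_X|_{E_n}\) and the cofinality of the \(E_n\) with the scheme-theoretic fiber neighborhoods. The \(H^2\)-vanishing for \(t=0,-1\) makes the transition maps on \(H^1(E_n,\cO_{E_n}(t))\) surjective, so every term is a quotient of the inverse limit. For \(t=0\) the limit is zero, which gives \(H^1(E_n,\cO_{E_n})=0\). For \(t=-1\), acyclicity of \(\cO_E(-1)\) gives \(H^1(E,N)\cong H^1(E_2,\cO_{E_2}(-1))\). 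This is a quotient of \(\widehat{\omega_{Y,P}}\) annihilated by \(\fm_P\), hence has dimension at most one.

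\textbf{Your third part also fails on its own terms.} If \(M|_B\cong T_{\PP^2}(-1)\), then \(H^1(M|_B(-2))=H^1(T_{\PP^2}(-3))\cong H^1(\Omega_{\PP^2})^\vee\cong\C\). This is not hypothetical: the paper eventually proves \(M|_A\cong M|_B\cong T_{\PP^2}(-1)\). In that configuration one finds \(H^1(E,M(-2))\cong H^1(E,N)\cong\C\). On the other hand, \(H^1(C,N|_C)=0\) for a general slice, because the sections on \(C_A\) and \(C_B\) take independent values at the node. Thus the inequality \(h^1(E,N)\le h^1(C,N|_C)\) you aim for is false.
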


\begin{proof}
    By Serre duality, the first vanishing is equivalent to
    \[H^0(E, (\Sym^k M)^{\vee}(-t - 2)) = 0.\]
    On a general line in either plane, \(M\) has splitting type \((0,1)\) by \Cref{lem:c2-pos}. So every summand of the displayed dual twist has strictly negative degree. And the global section therefore vanishes on a covering family of lines and so is zero.

    Regularity gives \(\cI^n/\cI^{n+1}\cong\Sym^n M\). The exact sequences
    \[0 \to \Sym^n M(t)  \to \cO_{E_{n + 1}}(t) \to \cO_{E_n}(t) \to 0\]
    and the \(H^2\)-vanishing make the transition map on \(H^1\) surjective for \(t \geq -1\).

    Let \(R = \cO_{Y, P}\) and \(\hat{R} = \varprojlim R/\fm_P^n\). We have
    \[\widehat{R^1f_* \cF}_P \cong \varprojlim H^1(E_n, \cF|_{E_n}).\]
    By \eqref{eq:aw}, \(R^1f_*\cO_X=0\) and \(R^1f_*\omega_X=\omega_Y\). Since \(\cO_{E_n}(-1)\cong\omega_X|_{E_n}\), and since the \(E_n\)-adic system is cofinal with the scheme-theoretic fiber neighborhoods, formal functions gives
    \[\varprojlim H^1(E_n, \cO_{E_n}) = 0\]
    and
    \[\varprojlim H^1(E_n, \cO_{E_n}(-1)) \cong \widehat{(\omega_Y)_P}.\]
    Surjectivity of the transition maps gives \(H^1(E_n,\cO_{E_n})=0\). The first-neighborhood sequence and the acyclicity of \(\cO_E(-1)\) identify \(H^1(E_2,\cO_{E_2}(-1))\) with \(H^1(E,N)\), which is therefore a quotient of \(\widehat{\omega_{Y,P}}\). Since \(H^1(E,N)\) is annihilated by \(\fm_P\), it is a quotient of \(\C\). Hence \(h^1(E,N)\leq1\).
\end{proof}

We are now ready to verify \Cref{ass:A7}. Since \(N=M(-H)\), it suffices to prove \(H^0(E,N)=0\). Suppose, to the contrary, that \(s\in H^0(E,N)\) is nonzero. We claim that the section is nonzero on both \(A\) and \(B\). Otherwise, if it vanished on \(A\), for example, its restriction to \(B\) would vanish along \(L\) and define a section of \(M|_B(-2)\), which is zero by the generic splitting type \((0,1)\). Moreover, a nonzero section of \(N|_l=\cO\oplus\cO(-1)\) on a general line cannot vanish, so the zero locus of \(s\) contains no curve on either component.

Thus the zero scheme \(Z_s\) is empty or zero-dimensional. In the latter case, \(s\) is regular on the Cohen--Macaulay surface, so the Koszul sequence is
\[0 \to \cO_E \xrightarrow{s} N \to \cI_{Z_s/E}(-1) \to 0.\]
Because \(\cO_E(-1)\) is acyclic and \(H^{>0}(\cO_E)=0\), this gives
\[h^0(E, N) = 1, \quad h^1(E, N) = \operatorname{length}(Z_s) \leq 1.\]
If \(Z_s\) were empty, then \(h^0(N)-h^1(N)=1\), contradicting \Cref{lem:deform}. The bound \(h^1(N)\leq1\) therefore forces \(Z_s\) to be a single reduced point \(p\).

The point \(p\) lies in the smooth locus of \(E\). Indeed, at a point of \(L\), the maximal ideal of the surface local ring has three minimal generators, whereas the ideal of a length-one regular zero of a rank-two section has two. This is impossible.

Twisting the Koszul sequence, we have

\begin{equation}
\label{eq:e1}
    0 \to \cO_E(1) \to M \to \cI_{p/E} \to 0.
\end{equation}
The evaluation \(H^0(\cO_E)\to\cO_p\) is an isomorphism. Hence \(\cI_{p/E}\) is acyclic, \(h^0(M)=h^0(\cO_E(1))=4\), and \(H^1(M)=0\).

Multiplication by the line subsheaf in \eqref{eq:e1} gives
\[0\to M(1)\to\Sym^2M\to\cI_{p/E}^2\to0.\]
The equality on the right follows from the local presentation of the regular ideal of \(p\).

By twisting \eqref{eq:e1}, we have \(H^1(M(1)) = H^2(M(1)) = 0\). Then the above exact sequence implies
\[H^1(\Sym^2M) = H^1(\cI_{p/E}^2) \cong H^0(\cO_E/\cI_{p/E}^2)/\C \cong \C^2.\]

Now consider the second-neighborhood sequence
\[0 \to \Sym^2 M \to \cO_{E_3} \to \cO_{E_2} \to 0,\]
Since \(H^1(\cO_{E_3})=0\), its boundary map is surjective. Moreover, \(H^0(\cO_{E_2})\cong\C\oplus H^0(M)\) as vector spaces. Constants lift from \(E_2\) to \(E_3\), so the boundary induces a surjection
\[\delta': H^0(E, M) \cong \C^4 \to H^1(E, \Sym^2 M) \cong \C^2.\]
Every base function is defined on \(X\), so its first-order class lifts from \(E_2\) to \(E_3\). Hence \(W\subset\ker\delta'\). But then
\[3 \leq \dim W \leq 4 - 2 = 2,\]
which is absurd. Therefore, \Cref{ass:A7} holds.

Now we verify \Cref{ass:A8} by the following lemma.

\begin{lemma}
    The ordered blow-ups
    \(\tilde{T} = \operatorname{Bl}_{\hat{B}}(\operatorname{Bl}_{A}X)\)
    give a smooth small crepant resolution \(\mu:\tilde T\to T=\operatorname{Bl}_E X\). Here \(\hat B\) denotes the strict transform of \(B\) in \(\operatorname{Bl}_A X\). A degree-\(-1\) quotient curve over a line of \(E\) lifts uniquely to an embedded \(\PP^1\) of canonical degree \(0\).
\end{lemma}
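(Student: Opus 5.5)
The plan is to compute everything in the local model along the double line, where \(\cI=(z,xy)\), \(\cI_{A/X}=(z,x)\), \(\cI_{B/X}=(z,y)\), and \(T\) is locally \(xy=zu\). Away from \(L\), \(E\) is a smooth surface, so \(T\) is already smooth there. Likewise, blowing up \(A\) and then \(\hat B\) away from \(L\) amounts to blowing up a single smooth codimension-two center. So it suffices to work near \(L\), with \(u\) as an extra coordinate, so that \(T\) has transverse \(A_1\) singularities along a curve.

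\textbf{Step 1 (smoothness and factorization).} In \(\operatorname{Bl}_A X\) I use the two charts. In the \(x\)-chart, \(z=xa\), and \(\hat B=(a,y)\), as already computed in \Cref{lem:c2-pos}. In the \(z\)-chart, \(x=zc\), and \(\hat B\) does not meet this chart near \(L\): there \(\cI\cO=z(1,cy)\) is principal. Thus \(\hat B\) is a smooth codimension-two center and \(\tilde T\) is smooth. Next, \(\cI\cO_{\tilde T}\) is invertible. On \(\operatorname{Bl}_A X\) it equals \(\cO(-E_A)\cdot\cI_{\hat B}\) up to an invertible factor, and \(\cI_{\hat B}\) becomes invertible after the second blow-up. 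The universal property then gives \(\mu:\tilde T\to T\). In the chart with \(u=y/a\), so that \(y=au\), the map is an isomorphism onto the locus where \(T\) is smooth. The fiber over each singular point is a single \(\PP^1\), namely the small resolution of \(xy=zu\) obtained by blowing up the Weil divisor \((z,x)\). Hence \(\mu\) is small, projective, and an isomorphism over the smooth locus.

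\textbf{Step 2 (crepancy).} Both \(\tilde T\) and \(T\) are Gorenstein and \(\mu\) is small, so \(K_{\tilde T}=\mu^*K_T\) holds automatically. No divisor is contracted, so the two canonical divisors agree in codimension one, and \(\mu^*K_T\) is Cartier. As a check, the codimension-two blow-up formulas give \(K_{\tilde T}=\pi^*K_X+E_A+E_{\hat B}\). Pushing this forward gives \(\beta^*K_X+D\), because \(D\) pulls back to the sum of the two exceptional divisors, consistent with \(\cI\cO=\cO(-E_A)\cI_{\hat B}\).

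\textbf{Step 3 (lifting quotient curves).} Let \(\gamma\subset D\) be a degree-\(-1\) quotient curve over a line \(l\). If \(\gamma\) avoids the singular locus of \(T\), then \(\tilde\gamma=\mu^{-1}(\gamma)\) is the unique lift, and it is isomorphic to \(\gamma\). Otherwise, I take the strict transform \(\tilde\gamma\). It maps birationally onto \(\gamma\cong\PP^1\), and it is unique because \(\mu^{-1}(\gamma)\) differs from \(\gamma\) only by finitely many exceptional \(\PP^1\)'s. Crepancy gives \(K_{\tilde T}\cdot\tilde\gamma=K_T\cdot\gamma=0\), by the computation in \Cref{ass:A2}.

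\textbf{Main obstacle.} The delicate part is showing that \(\tilde\gamma\to\gamma\) is an isomorphism, i.e.\ that \(\tilde\gamma\) is smooth. Singularities of \(T\) occur only over \(L\), and the quotient \(M|_l\to\cO_l(-1)\) exists only when \(M|_l\cong\cO(-1)\oplus\cO(2)\), by \Cref{lem:c2-a5}. At a singular point I would check in coordinates that \(\gamma\) is a smooth curve germ through the node, transverse to the singular locus inside \(D\). The strict transform of a smooth germ under a small resolution of \(xy=zu\) is again a smooth germ. Since \(\gamma\cong\PP^1\) is smooth, its normalization \(\tilde\gamma\) maps isomorphically onto \(\gamma\), and \(\tilde\gamma\) is embedded. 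If \(l\neq L\) meets \(L\) in one point, this is a single local check. If \(l=L\), the whole of \(\gamma\) may lie in the singular locus, and I would verify directly in the \(x\)-chart, where \(D\) pulls back to \(E_A+E_{\hat B}\), that the preimage contains a unique section over \(L\) of tautological degree \(-1\).
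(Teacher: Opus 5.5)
Your Steps 1 and 2 follow the paper: the local model \(\cI=(z,xy)\), invertibility of \(\cI\cO_{\tilde T}\) with the universal property of the blow-up, and "small plus Gorenstein gives crepant". The strict-transform lift in Step 3 is also the paper's route.

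\textbf{The gap: the case \(l=L\).} You leave this case open and even allow \(\gamma\subset\operatorname{Sing}T\). No direct verification could handle that situation. \(\mu^{-1}(\operatorname{Sing}T)\) is a \(\PP^1\)-bundle over a curve, so if \(\gamma\) were the singular curve it would have infinitely many lifts mapping isomorphically onto it, and uniqueness would fail. What you need is that this case never occurs. That is exactly the fact the paper's proof rests on: the singular section has positive tautological degree.

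Concretely, in the chart \(xy=zu\) the singular locus is \(x=y=z=u=0\). This is the point of \(\PP_L(M|_L)\) given by the quotient of \(M|_L\) that kills the class of \(xy\). Globally, that kernel is the image of the multiplication map \(N^\vee_{L/B}\otimes N^\vee_{L/A}\to M|_L\), which is locally \(x\otimes y\mapsto xy\). Both factors are \(\cO_L(-1)\), since \(L\) is a line in each plane, so the kernel is a saturated copy of \(\cO_L(-2)\). The singular section therefore has tautological degree \(\deg M|_L+2=3\). A degree-\(-1\) quotient curve is thus a different section of \(D|_L\to L\) and meets \(\operatorname{Sing}T\) in finitely many points. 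Your strict-transform argument then applies to \(L\) exactly as to the other lines.

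\textbf{The "main obstacle" is not one.} The map \(\tilde\gamma\to\gamma\) is finite and birational onto the smooth curve \(\gamma\cong\PP^1\), hence an isomorphism. Equivalently, as in the paper, the lift \(\PP^1\to\tilde T\) obtained by properness composes with \(\mu\) to a closed immersion, so it is itself a closed immersion. No transversality check at the nodes is needed.

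\textbf{A small slip in Step 1.} The chart \(a=yv\), not \(y=au\), maps isomorphically onto the smooth chart \(z=xy\,s\) of \(T\). The chart \(y=au\) contains an affine piece of the contracted curve \(\{x=u=0\}\). This does not affect your conclusion that \(\mu\) is small and an isomorphism over the smooth locus.
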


\begin{proof}
    The centers \(A\) and \(\hat B\) are globally defined smooth surfaces. In the local model \(\cI=(z,xy)\), the first blow-up gives \(\cI\cO_{\operatorname{Bl}_A X}=\cO(-F_A)\cI_{\hat B}\), where \(F_A\) is its exceptional divisor, so the second blow-up makes the pullback of \(\cI\) invertible. The universal property of \(T=\operatorname{Bl}_E X\) therefore gives a global projective morphism \(\mu:\tilde T\to T\) and its local descriptions agree on overlaps by uniqueness in that universal property. Locally along the singular locus, \(T\) has equation \(xy=zu\), with one additional parameter along \(L\). The ordered blow-ups give the standard small resolution. Since \(T\) is Gorenstein and \(\mu\) has no exceptional divisor, \(\mu\) is crepant. The singular section has positive tautological degree, so a degree-\(-1\) quotient curve is not contained in it. Its lift away from finitely many points extends over \(\PP^1\) by properness and is unique because the lift is fixed on a dense open set. It is embedded because its composite with \(\mu\) is embedded. The canonical degree follows from crepancy.
\end{proof}

Finally, we obtain the following positivity statement.
\begin{proposition}
    The conormal bundle \(M\) is nef and globally generated.
\end{proposition}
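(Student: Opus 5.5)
We will deduce the statement from \Cref{prop:non-neg}, whose hypotheses have now been checked in the C2 case. \Cref{ass:A2,ass:A3} were verified directly. \Cref{lem:c2-a5} gives \Cref{ass:A5}. \Cref{ass:A6} follows from the general slice analysis together with \Cref{lem:aw-conormal-lifting}. The vanishing \(H^0(E,N)=0\) gives \Cref{ass:A7}, and the last lemma gives \Cref{ass:A8}. Hence no \(H\)-degree-one line \(l\subset E\) admits a surjection \(M|_l\twoheadrightarrow\cO_l(-1)\).

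\emph{Step 1: splitting type on every line.} By \Cref{lem:c2-a5}, on every line \(l\) in \(A\) or \(B\), including \(L\), the restriction \(M|_l\) is either \(\cO\oplus\cO(1)\) or \(\cO(-1)\oplus\cO(2)\). The second type has a quotient to \(\cO_l(-1)\), which \Cref{prop:non-neg} excludes. Hence \(M|_l\cong\cO\oplus\cO(1)\) for every line \(l\) in either plane.

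\emph{Step 2: the bundle on each plane.} By \Cref{lem:uniform-plane-bundles}, \(M|_A\) and \(M|_B\) are each isomorphic to \(\cO\oplus\cO(1)\) or to \(T_{\PP^2}(-1)\). Both bundles are globally generated on \(\PP^2\), so both restrictions are nef.

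\emph{Step 3: nefness and global generation on \(E\).} A vector bundle is nef if and only if its restriction to every irreducible component is nef, so \(M\) is nef. For global generation, we glue along \(L\) using
\[
0\to M\to M|_A\oplus M|_B\to M|_L\to 0,
\]
which is exact because \(\cO_E\to\cO_A\oplus\cO_B\to\cO_L\to0\) is exact for \(E=A\cup_L B\) and \(M\) is locally free. Every nonzero graded piece of \(\cO\oplus\cO(1)\) or \(T_{\PP^2}(-1)\) on \(\PP^2\) has vanishing \(H^1\) after twisting by \(-1\). Hence \(H^1(A,M|_A(-L))=H^1(B,M|_B(-L))=0\), so the restrictions \(H^0(A,M|_A)\to H^0(L,M|_L)\) and \(H^0(B,M|_B)\to H^0(L,M|_L)\) are surjective. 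Now take \(x\in A\) and \(v\in M_x\). Choose \(s_A\in H^0(A,M|_A)\) with \(s_A(x)=v\). By surjectivity on \(B\), there is \(s_B\in H^0(B,M|_B)\) with \(s_B|_L=s_A|_L\). The pair \((s_A,s_B)\) lies in the kernel of the difference map to \(M|_L\), so by the exact sequence above it glues to a global section of \(M\) taking the value \(v\) at \(x\). The case \(x\in B\) is symmetric. Therefore \(M\) is globally generated, and nefness also follows from this.

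\emph{Main obstacle.} Step 1 carries the real content, and it reduces entirely to \Cref{prop:non-neg}. The point to be careful about is that \Cref{ass:A6} was verified only for a general slice, while \Cref{prop:non-neg} is applied to \emph{every} line, including \(L\). I would therefore check that its proof uses the general slice only to produce the curve \(\Gamma\) from the swept surface \(\Sigma\), and never requires the particular line \(l\) to lie in a general slice; the auxiliary slice through \(l\) from \Cref{ass:A4} is needed only to verify \Cref{ass:A5}. The gluing in Step 3 is routine.
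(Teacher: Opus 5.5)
Your proposal is correct and follows the paper's proof: you exclude the \((-1,2)\) type on every line via \Cref{prop:non-neg}, apply \Cref{lem:uniform-plane-bundles} on each plane, and glue sections along \(L\). Your exact sequence \(0\to M\to M|_A\oplus M|_B\to M|_L\to0\), together with the vanishing \(H^1(A,M|_A(-1))=H^1(B,M|_B(-1))=0\), makes explicit the surjectivity of restriction to \(L\), which the paper only asserts.
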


\begin{proof}
    By \Cref{prop:non-neg}, the splitting type \((-1,2)\) is impossible on every line. Hence \Cref{lem:uniform-plane-bundles} gives
    \[M|_A, M|_B \in \{\cO\oplus \cO(1), T_{\PP^2}(-1)\}.\]
    Both bundles are globally generated, and their spaces of sections restrict surjectively to \(L\). Sections on the two-components can therefore be chosen with matching restrictions and glued. Thus \(M\) is globally generated on \(E\), and in particular nef.
\end{proof}

For \Cref{ass:A9}, the klt property follows from the local ordinary-double-point model. The universal quotient makes \(\cL|_D\) globally generated and nef. Every curve in the special \(f\circ\beta\)-fiber lies in the support of \(D\), and every other contracted curve is \(\cL\)-trivial. Therefore \(\cL\) is \(f\circ\beta\)-nef.

We now prove our main theorem of the section.

\begin{theorem}
    \label{thm:c2}
    Suppose \(f: X^4 \to Y^3\) is an elementary \(K_X\)-negative contraction and
    \[E = A \cup_L B, A \cong B \cong \PP^2, \cO_A(H)\cong\cO_A(1),\quad\cO_B(H)\cong\cO_B(1),\]
    where \(L\) is a line on both planes. Then
    \[H^0(E, N_{E/X}) = 0, \fm_P \cO_X = \cI_{E/X} \text{ near } E, \widehat{\cO}_{Y,P} \cong \C\llbracket t_1, t_2, t_3 \rrbracket.\]
    Thus the scheme-theoretic fiber is reduced and \(Y\) is smooth at \(P\).
\end{theorem}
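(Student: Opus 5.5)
The three conclusions will be assembled from the verifications already carried out in this subsection, in the order: vanishing of sections of the normal sheaf, reducedness of the fiber, smoothness of the target.

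\emph{Step 1: $H^0(E,N_{E/X})=0$.} Since $N\cong M(-H)$, this is exactly \Cref{ass:A7}. Its verification above went as follows. A nonzero section $s$ is nonvanishing on both planes and has at most a zero-dimensional zero scheme. \Cref{lem:deform} together with $h^1(N)\leq1$ forces that zero scheme to be a single smooth point $p$. The second-neighborhood boundary map $\delta'\colon H^0(M)\cong\C^4\to H^1(\Sym^2M)\cong\C^2$ is surjective and kills $W$. This contradicts $\dim W\geq3$.

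\emph{Step 2: $\fm_P\cO_X=\cI$ near $E$.} All of \Cref{ass:A2}--\Cref{ass:A8} have now been checked. \Cref{prop:non-neg} therefore excludes the splitting $(-1,2)$. The preceding proposition gives that $M$ is nef and globally generated. For \Cref{ass:A9}, three facts were recorded above: $T$ has only transverse ordinary double points, hence is klt; $\cL|_D=\cO_D(1)$ is globally generated; and $\cL$ is $f\circ\beta$-nef. \Cref{lem:fiber} then gives $\fm_P\cO_X=\cI$. Equivalently, since $E$ is a complete intersection, one may apply \Cref{prop:aw-reducedness} directly. Either way, $f^{-1}(P)=E$ scheme-theoretically.

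\emph{Step 3: smoothness of $Y$ at $P$.} Because $\fm_P\cO_X=\cI$, taking $\fm_P^n$ gives $\fm_P^n\cO_X=\cI^n$. Using $f_*\cO_X=\cO_Y$ and formal functions,
\[
\widehat{\cO}_{Y,P}\cong\varprojlim H^0(E_n,\cO_{E_n}).
\]
The graded pieces $\cI^n/\cI^{n+1}\cong\Sym^nM$ carry the $\fm_P$-adic filtration, so I would compare the associated graded of $\widehat{\cO}_{Y,P}$ with $\bigoplus_n H^0(E,\Sym^nM)$.
\begin{itemize}
\item The preceding lemma gives $H^1(E_n,\cO_{E_n})=0$. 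Hence every $H^0(E_{n+1})\to H^0(E_n)$ is surjective, and $\fm_P^n/\fm_P^{n+1}\to H^0(E,\Sym^nM)$ is onto. So $\operatorname{gr}\widehat{\cO}_{Y,P}$ is a quotient-compatible algebra mapping onto $\bigoplus_n H^0(\Sym^nM)$.
\item The embedding dimension is then $\dim\fm_P/\fm_P^2=h^0(E,M)$, provided the map $\fm_P/\fm_P^2\to H^0(M)$ is injective. Injectivity holds because $\fm_P\cO_X=\cI$ means $f^*a\in\cI^2$ forces $a\in\fm_P^2$, using $f_*\cI^2=\fm_P^2$ up to completion.
\item It remains to show $h^0(E,M)=3$. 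Then $Y$ has embedding dimension $3=\dim Y$ at $P$, so $Y$ is smooth there and $\widehat{\cO}_{Y,P}\cong\C\llbracket t_1,t_2,t_3\rrbracket$.
\end{itemize}

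To compute $h^0(M)$, use the restriction sequence $0\to M(-H)\to M\to M|_C\to0$.
\begin{itemize}
\item Step 1 gives $H^0(M(-H))=H^0(N)=0$.
\item By \eqref{eq:W}, $h^0(M|_C)=4$ and $\bar r$ spans the cokernel of $W_C$.
\item So $h^0(M)\leq4$, and $h^0(M)=3$ exactly when $\bar r\notin$ the image of $H^0(M)$.
\end{itemize}
I expect this last step to be the main obstacle. My first attempt would be a cohomological count. By Riemann--Roch and \Cref{lem:deform}, $h^0(N)-h^1(N)+h^2(N)=\chi(N)$, with $h^0(N)=0$ and $h^2(N)=0$ from the vanishing lemma at $t=-1$. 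Feeding this into $\chi(M)=\chi(N)+\chi(M|_C)$ would give $\chi(M)=4-h^1(N)$. If this yields only $h^0(M)\leq4$, I would instead use \Cref{lem:fb}. Because $f^*b\in\cI^2$, the class $b$ lies in $\fm_P^2$. Suppose $\bar r$ lifted to a section of $M$ of the form $[f^*a]$. Then the relation $r^2=b$ on $Z$ would make $r-a$ vanish to second order, so $\tau$ would be unramified at $z_P$. But $\tau$ is ramified there, since $b(P)=0$ and $b$ is the branch function. This is a contradiction, so $\dim W=3=h^0(M)$, completing the proof.
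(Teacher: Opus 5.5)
Your Steps 1 and 2 match the paper. $H^0(E,N)=0$ is the verification of \Cref{ass:A7}, and $\fm_P\cO_X=\cI$ comes from \Cref{lem:fiber}.

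The gap is in Step 3. It rests on $\dim\fm_P/\fm_P^2=h^0(E,M)$, that is, on injectivity of $\fm_P/\fm_P^2\to H^0(E,M)$, which amounts to $f_*(\cI^2)=\fm_P^2$. This does not follow from $\fm_P\cO_X=\cI$. That equality only gives $\fm_P^2\cO_X=\cI^2$, and $f_*(\fm_P^2\cO_X)$ can strictly contain $\fm_P^2$. For instance, contract the zero section of $L^{-1}$ over an elliptic curve with $\deg L=2$. Then $\fm\cO_X=\cI$, but $f_*\cI^2$ contains all of $H^0(L^2)$ in degree two, while $\fm^2$ contains only the $3$-dimensional image of $\Sym^2H^0(L)$. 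Without this injectivity, $\dim W=3$ gives no upper bound on the embedding dimension, and an upper bound is exactly what you need.

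Your first bullet has a similar problem. Since $H^1(\cO_{E_{n+1}})=0$, the cokernel of $H^0(\cO_{E_{n+1}})\to H^0(\cO_{E_n})$ is precisely $H^1(E,\Sym^nM)$. You have not shown this vanishes, and the paper obtains it only after identifying $M$.

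The claim $h^0(M)=3$ also has real content and is not yet proved. The mixed case $M|_A\cong T_{\PP^2}(-1)$, $M|_B\cong\cO\oplus\cO(1)$ is compatible with $H^0(N)=0$ and has $h^0(M)=4$. Your argument has three issues:
\begin{enumerate}
\item It reuses $f_*\cI^2=\fm_P^2$ to put $b$ in $\fm_P^2$.
\item It tacitly needs $W=H^0(E,M)$. This is true by \Cref{lem:aw-conormal-lifting} since $M$ is nef, but it must be said.
\item It passes from ``$\bar r$ agrees with $[f^*a]$ on $C$'' to $r-a\in\fm_{z_P}^2$. That requires injectivity of $\fm_{z_P}/\fm_{z_P}^2\to H^0(C,M|_C)$. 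You can get this from \Cref{ass:A6}, because the cDV point $z_P$ has embedding dimension $4=h^0(C,M|_C)$; the relation $r^2=b$ plays no role.
\end{enumerate}

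The paper avoids the embedding-dimension comparison entirely. From $\fm_P\cO_X=\cI$, the map $T\to Y$ lifts to $\operatorname{Bl}_PY$. So $\xi=c_1(\cO_D(1))$ is pulled back from a surface, and $\xi^3=0$ on the threefold $D=\PP_E(M)$. The projective bundle relation then gives $0=c_1(M)^2-c_2(M)=2-c_2(M)$. This forces $M|_A\cong M|_B\cong T_{\PP^2}(-1)$, which is what kills the mixed case, and hence $M\cong q^*T_{\PP^2}(-1)$. The Euler sequence gives $H^1(\Sym^kM)=0$ and $H^0(\Sym^kM)\cong\Sym^k\C^3$ compatibly with products. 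Hence $H^0(\cO_{E_n})\cong\C[t_1,t_2,t_3]/(t_1,t_2,t_3)^n$, and formal functions yield $\widehat{\cO}_{Y,P}\cong\C\llbracket t_1,t_2,t_3\rrbracket$ directly.

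If you want to keep your route, first establish $h^0(M)=3$ properly. Then replace the embedding-dimension count with the Koszul argument of \Cref{thm:c3-smooth}. Lift a basis of $H^0(M)=W$ to $a_1,a_2,a_3\in\fm_P$, so that $\cI=(f^*a_1,f^*a_2,f^*a_3)$ near $E$. The pulled-back Koszul complex has cohomology $\cO_E$ and $\cG=\omega_E(H)$. Then \eqref{eq:vanishing} gives $\cO_Y/(a_1,a_2,a_3)\cong\C_P$, so $\fm_P$ is generated by three elements.
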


\begin{proof}
    By \Cref{lem:fiber}, \(\fm_P\cO_X=\cI\). This equality induces a morphism \(T\to\operatorname{Bl}_P Y\). For \(\xi=c_1(\cO_D(1))\), one has \(\xi^3=0\). The projective bundle formula then gives
    \[0 = \int_D \xi^3 = c_1(M)^2 - c_2(M) = 2 - c_2(M).\]
    The two possible plane restrictions have \(c_2=0\) or \(1\). Hence the restriction of \(M\) to both planes is \(T_{\PP^2}(-1)\). The gluing classification on the reducible quadric gives \(M=q^*T_{\PP^2}(-1)\), where \(q:E\to\PP^2\) identifies the two planes along \(L\). We have \(q_*\cO_E=\cO_{\PP^2}\oplus\cO_{\PP^2}(-1)\) and
    \(0 \to \cO_E(-1) \to \cO_E^{\oplus 3} \to M \to 0\). From Euler sequence, we have
    \[0 \to \cO(-1) \otimes \Sym^{k - 1} \C^3 \to \cO \otimes \Sym^k \C^3 \to \Sym^k T_{\PP^2}(-1) \to 0.\]
    Therefore
    \[H^1(E, \Sym^k M) = 0, H^0(E,\Sym^k M)\cong\Sym^k\C^3.\]
    Note that these identifications are compatible with multiplication. Hence
    \[H^0(E_n, \cO_{E_n}) \cong \C[t_1, t_2, t_3]/(t_1, t_2, t_3)^n.\]
    Formal functions identifies the inverse limit with \(\widehat{\cO}_{Y,P}\cong\C\llbracket t_1,t_2,t_3\rrbracket\). Since a Noetherian local ring is regular if and only if its completion is regular, \(Y\) is smooth at \(P\).
\end{proof}

\subsection{C3 case}
By the reductions at the beginning of this section, it remains to consider \(E\cong S_3=\PP(1,1,3)\), the reduced cubic cone, with \(\cO_E(H)\cong\cO_E(3)\).

The surface \(E\) has a unique singular point \(v\), and its minimal resolution is
\[S\cong\bbF_3\cong \PP_{\PP^1}(\cO \oplus \cO(3)), \rho: S \to E.\]
The projection \(q:S\to\PP^1\) is the ruling and write \(l\) for a fiber. The quotient \(\cO\oplus\cO(3)\twoheadrightarrow\cO\) defines the negative section \(\Delta\), which is contracted by \(\rho\). We have
\[\Delta^2 = -3, \Delta l = 1, l^2 = 0.\]
The pullback polarization is
\[H_S\coloneq\rho^*(H|_E)=\Delta+3l.\]

Let \(M=\cI/\cI^2\), let \(Q=M^{\vee\vee}\) be its reflexive hull, and put \(V=(\rho^*Q)^{\mathrm{tf}}\). The local calculation below shows that \(V\) is locally free.

At the vertex, the minimal Cohen presentation of the surface has embedding dimension \(4\), so its coordinates lift to coordinates in the completed smooth ambient ring. The ideal is
\[\cI = (ac - b^2, ad - bc, bd - c^2),\]
\[B = \begin{pmatrix}
    c & d \\
    -b & -c \\
    a & b
\end{pmatrix}\]
The minors of this matrix define the projective cone in \(\PP^4\), and the Hilbert--Burch theorem gives the resolution in \Cref{ass:A1}. Away from \(v\), \(E\) is smoothly embedded in codimension \(2\). Hence the Cohen--Macaulay condition also holds.

For \Cref{ass:A2}, consider the incidence equations
\[cx - by + az = 0, dx - cy + bz = 0\]
where \([x:y:z]\) are homogeneous coordinates on \(\PP^2\). For every such point, the two equations are linearly independent, so the incidence variety is the total space of a rank-two vector bundle over \(\PP^2\). It is irreducible and agrees with the graph away from \(E\). Thus
\(T = \operatorname{Bl}_E X = \PP_X(\cI).\)
The space \(T\) is smooth. The plane over \(v\) is not a divisor, so the discrepancy formula on the regular embedding locus extends to \(K_T=\beta^*K_X+D\). The symmetric-algebra description gives \(D=\PP_E(M)\) and \(\cL|_D\cong\cO_D(1)\). This proves \Cref{ass:A2}.

The parametrization \(a=u^3,b=u^2w,c=uw^2,d=w^3\) identifies the surface locally at \(v\) with a scalar \(\mu_3\)-quotient of \(\bbA^2\). Let \(M_1\) be the weight-one module. The conormal sheaf satisfies
\begin{equation}
    \label{eq:ex-c3}    
0 \to M \to Q \to \C_v \to 0\end{equation}
where \(Q\cong M_1^{\oplus2}\), and \(M\subset Q\) is generated by \((u,0),(w,u),(0,w)\). On the resolution, \(M_1\) pulls back modulo torsion to a line bundle of degree one on \(\Delta\). Thus \(V\) is locally free of rank two and \(V|_{\Delta}\cong\cO_{\Delta}(1)^{\oplus2}\). The maximal minors \(u^2,uw,w^2\) have no common zero on \(\Delta\). Consequently the natural map \(\rho^*M\twoheadrightarrow V\) is surjective.

We can now verify \Cref{ass:A3}. The cubic-cone polarization is very ample, and a general hyperplane avoids \(v\) and cuts a smooth rational curve. Relative freeness and Bertini make \(D_h\) smooth near \(E\). The embedding \(E\subset X\) is regular near this curve, and the conormal sheaf \(M\) is torsion-free. A hyperplane equation not vanishing identically on \(E\) is therefore a nonzerodivisor on \(\cO_E\), \(M\), and \(\cO_{E_2}\).

Adjunction on a general anticanonical slice, a smooth rational curve in a crepant threefold, gives conormal determinant degree \(2\). Writing \(c_1(V)=a\Delta+bl\), we obtain
\[c_1(V) H_S = b = 2, c_1(V)\Delta = -3a + b = 2.\]
Thus \(c_1(V)=2l\).

The same nonzerodivisor assertion holds for a hyperplane section \(C'\) through \(v\). Now \(H^0(\cO_E) = \C\) and \(H^i(\cO(-H)) = 0\) gives \(H^0(\cO_{C'}) = \C\), even when \(C'\) is nonreduced. Then \Cref{lem:aw-slice-vanishing} gives \(H^1(\cO_{E_2 \cap D_{h'}}) = 0\).

Since we could choose \(C'\) to contain a prescribed ruling by base-point freeness of the subsystem, we verify all statements in \Cref{ass:A4}.

The following lemma bounds the splitting types of the conormal restrictions.
\begin{lemma}
\label{lem:c3-split}
    Every ruling has \(V|_l\cong\cO^{\oplus2}\) or \(\cO(-1)\oplus\cO(1)\). On a general section \(C\in\abs{H_S}\) avoiding \(\Delta\), the splitting type is one of
    \[(1,1), (0,2), (-1,3).\]
\end{lemma}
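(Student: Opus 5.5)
The plan is to compute degrees using \(c_1(V)=2l\), and to bound the splitting types from below by an \(H^1\)-vanishing for the conormal sheaf on suitable slices. This vanishing comes from \Cref{ass:A4}, in the same way as in \Cref{lem:c2-a5}.

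\textbf{Step 1: rulings.} Since \(c_1(V)\cdot l=2l\cdot l=0\), every ruling satisfies \(V|_l\cong\cO(a)\oplus\cO(-a)\) with \(a\geq0\). It remains to show \(a\leq1\).

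Let \(l'=\rho(l)\subset E\) be the image line. By the verification of \Cref{ass:A4} above, there is an anticanonical slice \(D_{h'}\) with \(l'\subset C'=E\cap D_{h'}\), where \(h'\) is a nonzerodivisor on \(\cO_E\) and \(\cO_{E_2}\). Here \(C'\) is a hyperplane section of the cubic cone through \(v\), so it may be a nonreduced union of rulings. The nonzerodivisor condition gives the exact sequence
\[0\to M|_{C'}\to\cO_{E_2\cap D_{h'}}\to\cO_{C'}\to0.\]
Constants lift, and \(H^1(\cO_{E_2\cap D_{h'}})=0\). Hence \(H^1(C',M|_{C'})=0\).

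The restriction \(M|_{C'}\twoheadrightarrow M|_{l'}\) is surjective. Since \(H^1\) is right exact on one-dimensional schemes, \(H^1(l',M|_{l'})=0\).

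Now \(\rho\) maps \(l\) isomorphically onto \(l'\), so \((\rho^*M)|_l\) is \(M|_{l'}\) transported along this isomorphism. Restricting the surjection \(\rho^*M\twoheadrightarrow V\) to \(l\) gives a surjection \(M|_{l'}\twoheadrightarrow V|_l\). Right exactness again gives \(H^1(l,V|_l)=0\). Therefore \(H^1(\cO(-a))=0\), so \(a\leq1\). This leaves exactly \(\cO^{\oplus2}\) and \(\cO(-1)\oplus\cO(1)\).

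\textbf{Step 2: general sections.} Let \(C\in|H_S|\) be general, so that \(C\) avoids \(\Delta\). Then \(\rho\) maps \(C\) isomorphically onto a general hyperplane section of \(E\) avoiding \(v\). Along this curve \(M=Q\) is locally free, so \(V|_C\cong M|_{\rho(C)}\).

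The conclusion of \Cref{ass:A4} for a general slice, combined with the argument following it, gives \(H^1(C,M|_C)=0\). Hence each summand of \(V|_C\cong\cO(p)\oplus\cO(q)\) has degree at least \(-1\). On the other hand, \(p+q=c_1(V)\cdot H_S=2\). The only possibilities are \((1,1)\), \((0,2)\) and \((-1,3)\).

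\textbf{Main obstacle.} The delicate point is Step 1 when \(C'\) is nonreduced, for instance a triple structure on rulings through \(v\). There I would rely only on the following facts:
\begin{itemize}
\item exactness of the slice sequence, which comes from the nonzerodivisor property;
\item right exactness of \(H^1\) on the one-dimensional scheme \(C'\);
\item the surjection \(\rho^*M\twoheadrightarrow V\).
\end{itemize}
None of these requires knowing the local structure of \(M\) at \(v\) beyond \eqref{eq:ex-c3}.
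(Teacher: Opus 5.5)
Your proof is correct and is essentially the paper's argument: you take the \(H^1\)-vanishing for \(M|_{C'}\) on the Assumption A4 slice through the ruling, push it to \(V|_l\) through the surjection \(\rho^*M\twoheadrightarrow V\), and combine it, together with the general-slice vanishing, with the degrees \(c_1(V)\cdot l=0\) and \(c_1(V)\cdot H_S=2\). The paper's proof also observes that \((M|_{\rho(l)})^{\mathrm{tf}}\cong V|_l\); this is used afterwards for Assumption A5 but is not needed for the statement itself.
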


\begin{proof}
    By \Cref{ass:A4}, we have \(H^1(C',M|_{C'})=0\). For a slice through \(l\), the natural map from the conormal restriction onto the pushforward of \(V|_l\) is surjective, so \(H^1(l,V|_l)=0\).
    
    Identifying the ruling with its image in \(E\), the surjection \(M|_{\rho(l)} \twoheadrightarrow V|_l\) kills the torsion. The map induces a surjection between rank \(2\) vector bundles on \(\PP^1\), and hence an isomorphism.

    In particular, every quotient of \(V|_l\) is a quotient of the actual conormal restriction. Its determinant has degree \(0\), proving the ruling list. On a general slice, determinant degree \(2\) and the same vanishing give the second list.
\end{proof}

Every \(H\)-degree-one line on the cubic cone is a ruling. Its torsion-free conormal restriction is therefore \(V|_l\), with splitting type \((0,0)\) or \((-1,1)\). This verifies \Cref{ass:A5}.

To establish \Cref{ass:A6}, it remains to exclude \((-1,3)\) case.

\begin{lemma}
\label{lem:c3-push}
    \(\rho_*V^{\vee}\cong N\) and \(R^1\rho_*V^{\vee}=0\).
\end{lemma}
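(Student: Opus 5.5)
The plan is to compare the two sheaves away from the vertex and then settle everything by an explicit computation on the resolution of the cubic cone near \(v\).

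\emph{Away from \(v\).} Over \(E\setminus\{v\}\) the map \(\rho\) is an isomorphism. There \(E\subset X\) is a regular embedding, so \(M=Q\) is locally free and \(V=\rho^*M\). Hence \(V^\vee=\rho^*N\) over \(E\setminus\{v\}\), and both statements are local at \(v\).

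\emph{The map \(\rho_*V^\vee\to N\).} The sheaf \(\rho_*V^\vee\) is torsion-free, being the pushforward of a locally free sheaf under a birational morphism. The sheaf \(N=\cHom_E(M,\cO_E)\) is the dual of a coherent sheaf on the normal surface \(E\), so it is reflexive. Moreover \(N=Q^\vee\), since \(M\) and \(Q\) agree in codimension one by \eqref{eq:ex-c3}. Restricting to the punctured neighbourhood of \(v\) and extending by reflexivity gives an injection \(\rho_*V^\vee\hookrightarrow j_*j^*N=N\), where \(j:E\setminus\{v\}\hookrightarrow E\). It is therefore enough to prove that this injection is surjective at \(v\) and that \(R^1\rho_*V^\vee=0\) near \(v\).

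\emph{Local model at \(v\).} Near \(v\) we use the \(\mu_3\)-quotient description \(E\cong\bbA^2/\mu_3\). The minimal resolution near \(\Delta\) is the total space \(\pi:\operatorname{Tot}(\cO_{\PP^1}(-3))\to\PP^1\), with \(\Delta\) the zero section. The invariant functions are the polynomials in \(u,w\) of degree divisible by \(3\).
\begin{itemize}
\item Since \(Q\cong M_1^{\oplus 2}\) and \(V|_\Delta\cong\cO_\Delta(1)^{\oplus2}\), we identify \(V\cong\pi^*\cO_{\PP^1}(1)^{\oplus2}\) near \(\Delta\). This uses that the torsion-free pullback of \(M_1\) is \(\pi^*\cO(1)\), realised by the sections \(u,w\).
\item Hence \(V^\vee\cong\pi^*\cO_{\PP^1}(-1)^{\oplus2}\).
\item Sections of \(\pi^*\cO(d)\) over a neighbourhood of \(\Delta\) are the completed sums \(\bigoplus_{j\ge0}H^0(\PP^1,\cO(d+3j))\). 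These are the polynomials in \(u,w\) of degree \(\equiv d \pmod 3\) and degree \(\ge d\).
\item For \(d=-1\) this gives the polynomials of degree \(2,5,8,\dots\), which is exactly the weight-\((-1)\) module \(M_{-1}=M_1^\vee\).
\end{itemize}
So \(\rho_*V^\vee\) is \(M_{-1}^{\oplus2}\cong Q^\vee=N\) at \(v\), and the injection above is an isomorphism.

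\emph{Vanishing of \(R^1\).} We use the theorem on formal functions along \(\Delta\). The successive quotients of the \(\Delta\)-adic filtration of \(V^\vee\) are
\[
V^\vee|_\Delta\otimes\cO_\Delta(-j\Delta)\cong\cO_{\PP^1}(-1+3j)^{\oplus2},\qquad j\ge0.
\]
Each of these has vanishing \(H^1\) on \(\PP^1\), because every degree is at least \(-1\). Hence \(H^1(V^\vee|_{k\Delta})=0\) for all \(k\), and therefore \(R^1\rho_*V^\vee=0\).

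\emph{Main obstacle.} The delicate point is the identification \(V\cong\pi^*\cO(1)^{\oplus2}\) near \(\Delta\), which needs more than the restriction \(V|_\Delta\cong\cO_\Delta(1)^{\oplus 2}\). I would obtain it directly from the explicit generators \((u,0),(w,u),(0,w)\) of \(M\subset Q\). One checks that the torsion-free pullback of \(Q\) is generated by the pullbacks of the basis \((1,0),(0,1)\) of \(M_1^{\oplus 2}\) twisted by \(\pi^*\cO(1)\).
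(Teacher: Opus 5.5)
Your argument is correct and follows the same route as the paper. You identify \(N\) with \(Q^\vee\cong M_{-1}^{\oplus2}\) at \(v\) via \eqref{eq:ex-c3}, and you match this with \(\rho_*V^\vee\) through the local \(\mu_3\)-model, where your computation \(\Gamma(\pi^*\cO(-1))=\bigoplus_{j\geq0}H^0(\cO(3j-1))\) makes explicit a step the paper leaves implicit. You then kill \(R^1\rho_*V^\vee\) by formal functions along \(\Delta\), with graded pieces \(\cO(3j-1)^{\oplus2}\).

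One step needs tightening: an injection between abstractly isomorphic modules need not be onto. Conclude instead that \(\rho_*V^\vee\cong M_{-1}^{\oplus2}\) is reflexive, so \(\rho_*V^\vee=j_*j^*\rho_*V^\vee=j_*j^*N=N\).
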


\begin{proof}
    The identity away from \(v\) is obvious. At \(v\), we consider exact sequence \eqref{eq:ex-c3}, and notice that dualizing the weight-one module gives the same reflexive dual as \(\cHom(M, \cO_E)\) and \(\cO_v\) has no contribution into \(\cO_E\). This proves the first equality.

    For \(R^1\rho_*V^\vee\) at \(v\), consider the formal neighborhoods of \(\Delta\). Since \(\cO_S(-\Delta)|_{\Delta}=\cO_{\Delta}(3)\) and \(V^\vee|_{\Delta}=\cO(-1)^{\oplus2}\), the exact sequences of the thickenings give
    \[H^1(\Delta^{(n)}, V^{\vee}|_{\Delta^{(n)}}) = 0.\]
   Thus,
    \[\widehat{(R^1\rho_*V^\vee)_v}\cong\varprojlim_n H^1(\Delta^{(n)},V^\vee|_{\Delta^{(n)}})=0.\]
\end{proof}

To exclude the negative generic splitting type, we will descend the maximal line subsheaf from the incidence variety to the resolution \(S\). The following criterion provides this descent and will be used again in the C5 case.
\begin{proposition}[{\cite[Lemma 2.1 and Proposition 2.2]{CF2014}}]
\label{prop:cf-descent}
Let \(p:T\to S\) be a dominant morphism of smooth connected varieties, with all nonempty fibers of dimension \(\dim T-\dim S\) and with irreducible general fibers. Let \(V\) be a vector bundle on \(S\), let \(L\subset p^*V\) be saturated, and put \(Q=p^*V/L\). If the second fundamental form
\[L\longrightarrow\Omega_{T/S}\otimes Q\]
vanishes generically, then \(L=p^*L_0\) for a saturated subsheaf \(L_0\subset V\). When \(S\) is a smooth surface and \(L_0\) has rank one, \(L_0\) is a line bundle.
\end{proposition}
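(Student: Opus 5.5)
The plan is to read the saturated subsheaf \(L\) as a rational classifying map to a relative Grassmannian. We then show that the second fundamental form is its vertical differential, descend the resulting relatively constant map to \(S\), and extend by saturation.

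\emph{Step 1: the classifying map.} Let \(r=\operatorname{rk}L\) and let \(\pi:G=\operatorname{Gr}_S(r,V)\to S\) be the relative Grassmannian of rank-\(r\) subspaces. Since \(L\subset p^*V\) is saturated, \(Q\) is torsion free. Hence there is an open set \(T^\circ\subset T\) with complement of codimension at least two on which \(L\) is a subbundle with locally free quotient. On \(T^\circ\), the subbundle \(L\) defines a morphism \(\varphi:T^\circ\to G\) over \(S\), that is, \(\pi\circ\varphi=p\), with \(\varphi^*\mathcal S=L\) for the tautological subbundle \(\mathcal S\). Using \(T_{G/S}\cong\cHom(\mathcal S,V/\mathcal S)\), the relative differential
\[d\varphi:T_{T/S}\longrightarrow\varphi^*T_{G/S}\cong\cHom(L,Q)\]
is adjoint to the second fundamental form \(L\to\Omega_{T/S}\otimes Q\). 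This is the standard computation: locally, differentiate a frame of \(L\) along vertical vectors and project to \(Q\). The hypothesis therefore says that \(d\varphi\) vanishes on \(T_{T/S}\) at the generic point.

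\emph{Step 2: descent over an open set of \(S\).} Because we are in characteristic zero, generic smoothness gives a dense open \(U\subset S\) over which \(p\) is smooth with irreducible fibers. Shrinking \(U\), we may also assume that \(d\varphi|_{T_{T/S}}=0\) on \(p^{-1}(U)\cap T^\circ\), since the vanishing locus of a section of a vector bundle is closed and contains the generic point. Then \(\varphi\) is constant on each connected, hence irreducible, smooth fiber \(p^{-1}(s)\cap T^\circ\). The fiber minus a codimension-two set is still irreducible, so constancy makes sense there. We therefore get a set-theoretic section \(\sigma\) of \(\pi\) over \(U\) with \(\varphi=\sigma\circ p\). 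Since \(p\) is smooth and surjective over \(U\), it is faithfully flat. By flat descent of morphisms, \(\sigma\) is a morphism \(U\to G\), because \(\varphi\) is constant on fibers and \(p\) is a categorical quotient there. The section \(\sigma\) gives a subbundle \(L_U\subset V|_U\) with \(p^*L_U=L\) over \(p^{-1}(U)\).

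\emph{Step 3: extension and comparison.} Let \(L_0\subset V\) be the saturation of the unique coherent extension of \(L_U\). Concretely, \(L_0\) is the kernel of \(V\to j_*(V|_U/L_U)\) modulo torsion, where \(j:U\hookrightarrow S\) is the inclusion. Equidimensionality between smooth varieties makes \(p\) flat by miracle flatness. Flat pullback preserves torsion-freeness of \(V/L_0\), so \(p^*L_0\subset p^*V\) is saturated. Two saturated subsheaves of a locally free sheaf that agree on a dense open set coincide, since each equals the saturation of their intersection. Hence \(L=p^*L_0\). For the final claim, a saturated subsheaf of a vector bundle is reflexive, and a rank-one reflexive sheaf on a smooth surface is invertible.

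The main obstacle is Step 2. One must pass rigorously from the infinitesimal vanishing of \(d\varphi\) along fibers to factorization through \(S\) as a morphism, not merely pointwise. I would handle this by working over the open set where \(p\) is smooth with irreducible fibers and using faithfully flat descent. A pointwise argument alone does not give a morphism \(U\to G\).
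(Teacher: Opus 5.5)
Your argument is correct. The paper gives no proof of this statement: it is quoted from \cite[Lemma 2.1 and Proposition 2.2]{CF2014}. So your classifying-map argument is a self-contained substitute for the citation, not a variant of an in-paper proof. What it buys is transparency about where each hypothesis is used:
\begin{itemize}
\item characteristic zero enters through generic smoothness, and through the fact that a morphism with zero differential from a smooth connected variety is constant;
\item irreducibility of general fibers is what makes \(\varphi\) constant along fibers;
\item equidimensionality of \emph{all} fibers is used only through miracle flatness, to make \(p^*L_0\) saturated.
\end{itemize}
That last hypothesis cannot be dropped. Take \(p\) to be the blow-up of \(\bbA^2\) at the origin and \(L_0=\cO\cdot(x,y)\subset\cO^{\oplus2}\), which is saturated. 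The second fundamental form vanishes generically because \(\Omega_{T/S}\) is torsion. Yet \(p^*L_0\) is not saturated, and its saturation is not a pullback.

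Three details need tightening.
\begin{itemize}
\item Before defining \(\sigma\), replace \(U\) by the image of the smooth, hence open, map \(T'=T^\circ\cap p^{-1}(U)\to U\). When \(\dim S\geq2\), an entire fiber may lie in \(T\setminus T^\circ\), and \(\sigma\) would be undefined there.
\item For descent, the two composites \(T'\times_UT'\rightrightarrows T'\xrightarrow{\varphi}G\) must agree as morphisms, not just on points. This holds because \(T'\times_UT'\) is smooth over \(U\), hence reduced, and the two maps agree on closed points.
\item \(L_U\) has no ``unique coherent extension.'' The right object is the maximal one, \(\ker\bigl(V\to j_*(V|_U/L_U)\bigr)\). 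It is already saturated, since its quotient embeds in the torsion-free sheaf \(j_*(V|_U/L_U)\), so no ``modulo torsion'' step is needed.
\end{itemize}
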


We apply this criterion to obtain the following description of \(V\).

\begin{lemma}
\label{lem:c3-exact}
    Suppose the generic splitting type on sections of \(H_S\) is \((-1,3)\). Then there is an exact sequence
    \[0 \to \cO_S(\Delta + 3l) \to V \to \cO_S(-\Delta - l) \to 0.\]
\end{lemma}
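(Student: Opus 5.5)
We apply \Cref{prop:cf-descent} to the incidence variety of the linear system \(|H_S|\) on \(S\cong\bbF_3\). Set
\[
\mathcal C=\{(x,C)\in S\times U: x\in C\},
\]
where \(U\subset|H_S|\) is the open set of smooth members avoiding \(\Delta\). Let \(p:\mathcal C\to S\) and \(\pi:\mathcal C\to U\) be the projections. Since \(H_S\) is base-point free on \(S\), the map \(p\) is a projective-space bundle over its image (restricted to an open set). Its fibers are equidimensional and irreducible, and \(\mathcal C\) is smooth. By hypothesis, the generic splitting type of \(V\) on members of \(U\) is \((-1,3)\). The \(\cO(3)\)-summands then glue to a saturated rank-one subsheaf \(L\subset p^*V\) whose restriction to each fiber of \(\pi\) is the maximal destabilizing line subbundle \(\cO_C(3)\). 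The quotient \(Q=p^*V/L\) restricts to \(\cO_C(-1)\).

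The main step, and the one I expect to be the real obstacle, is the vanishing of the second fundamental form
\[
L\to\Omega_{\mathcal C/S}\otimes Q .
\]
I would prove it by restricting to a fiber \(C\) of \(\pi\). Over \(C\), the relative cotangent bundle \(\Omega_{\mathcal C/S}\) restricts to a bundle built from the normal directions of \(C\) inside the linear system. Concretely it is a quotient of (or dual to) \(H^0(\cO_S(H_S))/\C\otimes\cO_C\) modulo sections vanishing along \(C\), twisted by \(\cO_C(H_S|_C)\); the precise form is the standard computation for incidence varieties. Restricted to \(C\), the map becomes a section of
\[
\cO_C(-3)\otimes\Omega_{\mathcal C/S}|_C\otimes\cO_C(-1).
\]
Since \(C\cdot H_S=3\), the twist of \(\Omega_{\mathcal C/S}|_C\) has degree at most \(3\) on each summand, so the whole bundle has negative degree summands and no sections. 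Hence the second fundamental form vanishes on a general fiber, so generically. If the degree count is too tight, I would refine it using the fact that \(\Omega_{\mathcal C/S}|_C\) is a subsheaf of a trivial bundle tensored with \(\cO_C(H_S)\). Then \Cref{prop:cf-descent} gives \(L=p^*L_0\) for a line bundle \(L_0\subset V\) on \(S\), saturated, with torsion-free quotient \(V/L_0\). That quotient has the form \(\cO_S(c_1(V)-L_0)\otimes\cI_Z\) for some zero-dimensional \(Z\).

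Next I determine \(L_0\). Write \(L_0=\cO_S(\alpha\Delta+\beta l)\). From \(L_0\cdot H_S=3\) we get \(\beta=3\). By \Cref{lem:c3-split}, \(V|_l\) is \(\cO^{\oplus 2}\) or \(\cO(-1)\oplus\cO(1)\), so \(L_0\cdot l=\alpha\le 1\). From the local computation, \(V|_\Delta\cong\cO_\Delta(1)^{\oplus2}\), so \(L_0\cdot\Delta=-3\alpha+3\le 1\), which forces \(\alpha\geq 1\). Hence \(\alpha=1\) and \(L_0=\cO_S(\Delta+3l)\). Using \(c_1(V)=2l\), the quotient is \(\cO_S(-\Delta-l)\otimes\cI_Z\).

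It remains to show \(Z=\emptyset\). I would use the second Chern class: \(c_2(V)=L_0\cdot(-\Delta-l)+\operatorname{length}Z\). Alternatively, \(Z\) would give, on a ruling through a point of \(Z\), a quotient of \(V|_l\) of degree lower than allowed by \Cref{lem:c3-split}. The restriction of \(0\to L_0\to V\) to such a ruling \(l\) has \(L_0\cdot l=1\) and \(V|_l\) of degree \(0\). If \(Z\) met \(l\), the image of \(L_0|_l\) would not be saturated, forcing a subsheaf of degree \(\ge 2\) in \(V|_l\). This contradicts the splitting list of \Cref{lem:c3-split}. Since rulings cover \(S\), we get \(Z=\emptyset\), which gives the claimed exact sequence.
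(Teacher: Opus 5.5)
Your proposal is correct and follows essentially the same route as the paper: you use the same incidence of sections avoiding \(\Delta\), descend the \(\cO(3)\)-subsheaf via \Cref{prop:cf-descent}, bound \(\alpha\le1\) from rulings and \(3-3\alpha\le1\) from \(V|_\Delta\cong\cO_\Delta(1)^{\oplus2}\), and rule out \(Z\) via saturation on a ruling. The step you flagged as the obstacle is exactly the paper's \(M_U\) computation, and it can be made precise: since \(H^0(S,H_S)/\C s_C\cong H^0(C,\cO_C(3))\), one gets \(\Omega_{\mathcal C/S}|_C\cong\cO_C(1)^{\oplus3}\), so the second fundamental form is a map \(\cO_C(3)\to\cO_C^{\oplus3}\) and hence vanishes.
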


\begin{proof}
   The complement \(S \backslash \Delta\) is the total space \(\cO_{\PP^1}(3)\). So the section avoiding \(\Delta\) is an affine translation of a base-point-free vector space \(U \subset H^0(\PP^1, \cO(3))\). We have
   \[0 \to M_U \to U \otimes \cO \to \cO(3) \to 0.\]
   We have \(M_U\cong\bigoplus_j\cO_{\PP^1}(-a_j)\), where \(1\leq a_j\leq3\) and \(\sum a_j=3\), because \(U\hookrightarrow H^0(\cO(3))\) has no kernel.

   On the incidence of these graphs
   \[\mathcal U = \{(x, C): x \in C\} \subset (S \backslash \Delta) \times U,\]
   the degree \(3\) summand is a unique maximal line subsheaf \(L\) over a dense open set of graph parameters. The tangent space along fibers of the evaluation map to \(S \backslash \Delta\) restricts on a graph to \(M_U\). As \(C \in U\) varies, we have a varying line \((L_C)_x \subset V_x\). Its second fundamental form has components
   \[\cO(3) \to \cO(-1 + a_j),\]
   which are zero. The line subsheaf is therefore constant along the connected open evaluation fibers. So it descends to a line subsheaf of \(V\) by \Cref{prop:cf-descent}.

   Saturating this line subsheaf in \(V\) gives a line bundle. Since its degree on a graph is \(3\), \cite[Section 1]{Hartshorne1980} gives an exact sequence
   \[0 \to \cO_S(a\Delta + 3l) \to V \to \cI_{Z/S} \otimes \cO_S(-a\Delta - l) \to 0,\]
   where \(Z\) is zero-dimensional. Since saturation ensures that the map does not vanish generically on \(\Delta\), and \(V|_{\Delta}\cong\cO(1)^{\oplus2}\), we have \(3-3a\leq1\). On a general ruling the line subsheaf has degree \(a\leq1\) by \Cref{lem:c3-split}. Hence \(a=1\).

   If \(Z\) met a ruling \(l\), the restricted degree-one line subsheaf would vanish there, and its saturation on that ruling would have degree at least two, contradicting \Cref{lem:c3-split}. Thus \(Z=\emptyset\).
\end{proof}

\begin{proposition}
    The generic splitting type \((-1,3)\) is impossible.
\end{proposition}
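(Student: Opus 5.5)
I would argue by contradiction. Assume the generic splitting type is \((-1,3)\), so that \Cref{lem:c3-exact} applies. Then I would compute the Euler characteristic of the normal sheaf \(N\) on \(E\) through the resolution \(\rho:S\to E\), and compare it with the deformation bound of \Cref{lem:deform}. That lemma applies because \Cref{ass:A1} holds for the cubic cone, and so the cohomological hypotheses of \Cref{lem:c-exc} are supplied by \eqref{eq:vanishing}.

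First I would dualize the sequence of \Cref{lem:c3-exact}:
\[
0\to\cO_S(\Delta+l)\to V^\vee\to\cO_S(-\Delta-3l)\to0 .
\]
By \Cref{lem:c3-push}, \(\rho_*V^\vee\cong N\) and \(R^1\rho_*V^\vee=0\). The Leray spectral sequence then gives \(H^j(E,N)\cong H^j(S,V^\vee)\) for all \(j\), and in particular \(\chi(E,N)=\chi(S,V^\vee)\).

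Next I would compute \(\chi(S,V^\vee)\) by Riemann--Roch on \(S\cong\bbF_3\). Here \(K_S=-2\Delta-5l\), and for a line bundle \(\chi(\cO_S(D))=1+\tfrac12 D\cdot(D-K_S)\).
\begin{itemize}
\item For \(D=\Delta+l\), the product \((\Delta+l)\cdot(3\Delta+6l)=-9+6+3=0\), so \(\chi(\cO_S(D))=1\).
\item For \(D=-\Delta-3l\), the product \((-\Delta-3l)\cdot(\Delta+2l)=3-2-3=-2\), so \(\chi(\cO_S(D))=0\).
\end{itemize}
Hence \(\chi(E,N)=1\).

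To pass from \(\chi\) to \(h^0-h^1\), I would show \(H^2(S,V^\vee)=0\). By Serre duality this reduces to \(h^0(\cO_S(K_S-\Delta-l))=h^0(\cO_S(-3\Delta-6l))=0\) and \(h^0(\cO_S(K_S+\Delta+3l))=h^0(\cO_S(-\Delta-2l))=0\). Both hold because these classes have negative intersection with the nef class \(H_S\), so they are not effective. Therefore
\[
h^0(E,N)-h^1(E,N)=\chi(E,N)=1>0,
\]
which contradicts \Cref{lem:deform}.

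The numerics are routine. The step I expect to need the most care is the identification of the cohomology of \(N\) with that of \(V^\vee\). This depends on \Cref{lem:c3-push} holding as stated at the vertex, namely that \(\rho_*V^\vee\) is exactly \(N\) and not merely its reflexive hull, together with the vanishing of \(R^1\rho_*\). Both are already established, so the argument only needs to invoke them correctly.
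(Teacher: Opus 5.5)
Your argument is correct and is essentially the paper's proof: you dualize the sequence of \Cref{lem:c3-exact}, transfer cohomology to \(N\) via \Cref{lem:c3-push}, and contradict \Cref{lem:deform} with \(h^0(E,N)-h^1(E,N)=1\). The only difference is in the computation. You use Riemann--Roch on \(\bbF_3\) together with the vanishing of \(H^2\), whereas the paper pushes forward along the ruling, using \(Rq_*\cO_S(\Delta+l)\cong\cO(1)\oplus\cO(-2)\) and the acyclicity of \(\cO_S(-\Delta-3l)\) to get \((h^0,h^1,h^2)(E,N)=(2,1,0)\).
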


\begin{proof}
     Dualizing the exact sequence in the preceding lemma gives
    \[0 \to \cO_S(\Delta + l) \to V^{\vee} \to \cO_S(-\Delta - 3l) \to 0.\]
    The last term has ruling degree \(-1\) and is acyclic. Moreover,
    \[Rq_* \cO_S(\Delta + l) \cong \cO(1) \oplus \cO(-2).\]
    Therefore, by \Cref{lem:c3-push}, \((h^0,h^1,h^2)(E,N)=(2,1,0)\), which contradicts \Cref{lem:deform}.
\end{proof}

The remaining conormal bundles on a general slice are \(\cO(1)^{\oplus2}\) and \(\cO\oplus\cO(2)\). The slice is a smooth rational curve in the smooth crepant threefold \(D_h\), and its blow-up is smooth. The required surjectivity follows from \Cref{lem:aw-conormal-lifting} and \cite[Proposition 5.6.2]{AW1998}. Thus \Cref{ass:A6} holds.

On a general section of \(H_S\), the two remaining splitting types become \((-2,-2)\) or \((-3,-1)\) after twisting by \(-H_S\). Such sections cover \(S\), so \(H^0(S,V(-H_S))=0\). The injection \(M\hookrightarrow\rho_*V\) then gives \(H^0(E,M(-H))=0\). This proves \Cref{ass:A7}. Together with \Cref{ass:A3}, \Cref{lem:fb} gives \(f^*b\in\cI^2\).

Since \(T\) is smooth, \Cref{ass:A8} is satisfied by taking \(\tilde T=T\) and \(\mu=\id\).

Now we have our positivity result.
\begin{proposition}
    Every ruling has trivial restriction, and \(V\cong q^* \cO_{\PP^1}(1)^{\oplus 2}\).
\end{proposition}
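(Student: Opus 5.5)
The plan is to first rule out the splitting type \((-1,1)\) on rulings using \Cref{prop:non-neg}, and then identify \(V\) through its restrictions to the fibers of \(q\) and to \(\Delta\).

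\emph{Step 1: every ruling has trivial restriction.} All hypotheses of \Cref{prop:non-neg} are now in place: \Cref{ass:A2,ass:A3}, \Cref{ass:A5,ass:A6,ass:A7} have been checked above, and \Cref{ass:A8} holds with \(\mu=\id\). Hence no \(H\)-degree-one line \(l\subset E\) carries a surjection \(M|_l\twoheadrightarrow\cO_l(-1)\). By the proof of \Cref{lem:c3-split}, every quotient of \(V|_l\) is a quotient of the conormal restriction \(M|_{\rho(l)}\). So the type \((-1,1)\) would produce such a forbidden quotient. By \Cref{lem:c3-split}, the only remaining possibility is \(V|_l\cong\cO_l^{\oplus2}\) for every ruling.

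\emph{Step 2: identify \(V\).} Since \(V\) is trivial on every fiber of \(q:S\to\PP^1\), cohomology and base change give the following:
\begin{itemize}
\item \(q_*V\) is locally free of rank two;
\item \(R^1q_*V=0\);
\item the natural map \(q^*q_*V\to V\) is an isomorphism, as it is an isomorphism on each fiber.
\end{itemize}
Write \(q_*V\cong\cO(a)\oplus\cO(b)\). Then \(V\cong q^*\cO(a)\oplus q^*\cO(b)\), which gives two constraints:
\begin{itemize}
\item Restricting to \(\Delta\), which maps isomorphically to \(\PP^1\) under \(q\), gives \(V|_\Delta\cong\cO(a)\oplus\cO(b)\). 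Comparing with \(V|_\Delta\cong\cO_\Delta(1)^{\oplus2}\) from the local vertex computation forces \(a=b=1\).
\item As a consistency check, \(c_1(V)=(a+b)l=2l\) agrees with the adjunction computation already done.
\end{itemize}
This yields \(V\cong q^*\cO_{\PP^1}(1)^{\oplus2}\).

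\emph{Main obstacle.} The main point to watch is Step 1: we must make sure that a destabilizing quotient of \(V|_l\) really is a quotient of \(M|_l\) on the line \(\rho(l)\subset E\), including at the vertex \(v\) where \(M\) is not locally free. This is the content of the surjection \(\rho^*M\twoheadrightarrow V\) and of the isomorphism argument in \Cref{lem:c3-split}, so I would cite these explicitly. Once Step 1 holds, the rest is formal.
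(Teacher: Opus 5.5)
Your proposal is correct and follows essentially the same route as the paper: \Cref{prop:non-neg} excludes the ruling type \((-1,1)\), the evaluation map \(q^*q_*V\to V\) is then an isomorphism on every fiber and hence globally, and \(V|_{\Delta}\cong\cO_\Delta(1)^{\oplus2}\) forces \(q_*V\cong\cO(1)^{\oplus2}\). Your explicit remark that a quotient \(V|_l\twoheadrightarrow\cO_l(-1)\) induces a quotient of \(M|_{\rho(l)}\) through \(\rho^*M\twoheadrightarrow V\) spells out a step that the paper leaves implicit in its verification of \Cref{ass:A5}.
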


\begin{proof}
    By \Cref{prop:non-neg}, a splitting type \((-1,1)\) is impossible. Hence \(V\) is trivial on every ruling. The evaluation map \(q^*q_*V\to V\) is an isomorphism on every fiber and therefore globally. The restriction \(V|_{\Delta}\cong\cO(1)^{\oplus2}\) gives \(q_*V\cong\cO(1)^{\oplus2}\).
\end{proof}

\begin{theorem}
    \label{thm:c3-exceptional}
    In the C3 case, with special fiber
    \(E \cong \PP(1,1,3),\)
    the sheaf \(\cO_E\) is exceptional.
\end{theorem}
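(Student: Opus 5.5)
We will apply \Cref{lem:c-exc}. \Cref{ass:A1} holds for the cubic cone \(E\cong S_3\), by the Hilbert--Burch resolution already recorded. Hence \eqref{eq:vanishing} gives \(R\Gamma(E,\cO_E)\cong\C\) and \(R\Gamma(E,\cG)=R\Gamma(E,\omega_E(H))=0\). Exceptionality of \(\cO_E\) is thus reduced to the single vanishing \(R\Gamma(E,N)=0\), where \(N=\cHom_E(M,\cO_E)\).

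To compute \(R\Gamma(E,N)\), we pass to the minimal resolution \(\rho:S\cong\bbF_3\to E\). By \Cref{lem:c3-push}, \(\rho_*V^\vee\cong N\) and \(R^1\rho_*V^\vee=0\), so the Leray spectral sequence gives
\[
R\Gamma(E,N)\cong R\Gamma(S,V^\vee).
\]
By the last proposition, \(V\cong q^*\cO_{\PP^1}(1)^{\oplus2}\). Hence \(V^\vee\cong q^*\cO_{\PP^1}(-1)^{\oplus2}\), and the projection formula gives
\[
Rq_*V^\vee\cong\cO_{\PP^1}(-1)^{\oplus2}\otimes Rq_*\cO_S\cong\cO_{\PP^1}(-1)^{\oplus2},
\]
since \(Rq_*\cO_S\cong\cO_{\PP^1}\) for the \(\PP^1\)-bundle \(q\). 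As \(\cO_{\PP^1}(-1)\) is acyclic, \(R\Gamma(S,V^\vee)=0\). Therefore \(R\Gamma(E,N)=0\), and \Cref{lem:c-exc} shows that \(\cO_E\) is exceptional.

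Most of the work has already been done in the chain of results establishing \(V\cong q^*\cO(1)^{\oplus2}\): excluding the \((-1,3)\) type and the \((-1,1)\) rulings via \Cref{prop:non-neg}. The step that needs care is checking the hypotheses of \Cref{lem:c-exc}. First, \(E\subset X\) must be Cohen--Macaulay of codimension two, including at the vertex \(v\); this follows from the determinantal Hilbert--Burch description. Second, we need \(\cG=\omega_E\otimes\omega_X^{-1}|_E=\omega_E(H)\). Finally, we need the identification \(N\cong\rho_*V^\vee\) at \(v\), where \(M\) is not locally free; this relies on \Cref{lem:c3-push}, whose \(R^1\)-vanishing used \(V^\vee|_\Delta\cong\cO(-1)^{\oplus2}\) together with formal functions along \(\Delta\).

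Once these hypotheses are confirmed, the cohomology computation above is routine. As a consistency check, we also obtain \(h^0(N)-h^1(N)=0\), which agrees with \Cref{lem:deform}.
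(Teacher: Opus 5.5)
Your proof is correct and is essentially the paper's argument: both use \Cref{lem:c-exc} to reduce exceptionality to \(R\Gamma(E,N)=0\), and both deduce this vanishing from \(V\cong q^*\cO_{\PP^1}(1)^{\oplus2}\). The only difference is bookkeeping: the paper pushes forward to obtain \(N\cong\cO_E(-1)^{\oplus2}\) and cites weighted-projective cohomology, whereas you stay on \(S\cong\bbF_3\) and apply Leray with the \(R^1\)-vanishing of \Cref{lem:c3-push}, which makes your computation slightly more self-contained.
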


\begin{proof}
    Pushing forward the bundle \(V\), we have
    \[Q\cong\cO_E(1)^{\oplus2},\quad N\cong\cO_E(-1)^{\oplus2}.\]
    Weighted-projective cohomology gives \(R\Gamma(N)=0\). The exceptional property now follows from \Cref{lem:c-exc}.
\end{proof}

The same calculation also yields reducedness and smoothness.
\begin{lemma}
    The conormal sheaf \(M\) is generated by three global sections.
\end{lemma}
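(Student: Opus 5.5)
We want to show that \(M\) is generated by three global sections. The plan is to compute \(H^0(E,M)\) through the inclusion \(M\subset Q\) from \eqref{eq:ex-c3}, and then check generation locally, at points of \(E\setminus\{v\}\) and at the vertex separately.

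First, pushing forward \(V\cong q^*\cO_{\PP^1}(1)^{\oplus2}\) gives \(Q\cong\cO_E(1)^{\oplus2}\), where \(\cO_E(1)\) is the weight-one sheaf on \(S_3\). Hence
\[
H^0(E,Q)\cong H^0(\PP^1,\cO(1))^{\oplus2}\cong\C^4,
\]
spanned by the pairs \((u,0),(w,0),(0,u),(0,w)\) in the weighted coordinates. Taking cohomology of \eqref{eq:ex-c3}, and using \(H^1(E,\cO_E(1))=0\), we get
\[
0\to H^0(M)\to H^0(Q)\cong\C^4\to\C_v\to H^1(M)\to0 .
\]
The local description of \(M\subset Q\) at \(v\) says that \(M\) is generated there by \((u,0),(w,u),(0,w)\). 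So \(M_v\) is the kernel of a linear functional on the fiber \(Q\otimes k(v)=\langle(u,0),(w,0),(0,u),(0,w)\rangle\), namely the functional killing \((u,0)\), \((0,w)\), and \((w,0)+(0,u)\). Concretely it is the functional \((w,0)-(0,u)\mapsto1\) modulo the others. This functional is nonzero on global sections, so the map \(H^0(Q)\to\C_v\) is surjective. Therefore \(h^0(M)=3\) and \(H^1(M)=0\). The three global sections are exactly \(s_1=(u,0)\), \(s_2=(w,u)\), \(s_3=(0,w)\), up to the identifications made by the gluing \(V\cong q^*\cO(1)^{\oplus2}\).

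Next we check generation away from \(v\). There \(M=Q=\cO_E(1)^{\oplus2}\), and the \(2\times3\) matrix of \(s_1,s_2,s_3\) has maximal minors \(u^2,uw,w^2\). These have no common zero on \(E\setminus\{v\}\), so the sections generate \(M\) there. This is the same minor computation that gave the surjectivity \(\rho^*M\twoheadrightarrow V\).

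At \(v\), the local computation shows that \(M\) is generated by these three elements, so the same global sections generate \(M_v\) by Nakayama. This completes the proof, with \(H^1(M)=0\) as a byproduct.

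The main obstacle is to make sure that the global identification \(Q\cong\cO_E(1)^{\oplus2}\) agrees with the local weight-one description at \(v\), so that the three global sections really restrict to the local generators \((u,0),(w,u),(0,w)\). I would handle this by noting that the three-dimensional subspace \(H^0(M)\subset H^0(Q)\) is determined intrinsically: it is the kernel of the surjection \(H^0(Q)\to\C_v\). So any basis of it works, and local generation at \(v\) then follows from the fact that \(\operatorname{length}(Q/M)=1\) together with a dimension count of \(M\otimes k(v)\).
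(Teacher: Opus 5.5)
Your outline matches the paper's proof. The four sections of \(Q\cong\cO_E(1)^{\oplus2}\) map onto \(Q/M\cong\C_v\), the kernel is \(H^0(E,M)\cong\C^3\), it has rank two off \(v\) by the minors \(u^2,uw,w^2\), and it generates \(M\) at \(v\). Your vertex argument is sound and does not depend on any identification. Indeed, \(H^0(Q)\to Q\otimes k(v)\) is an isomorphism, \(\fm_vQ\subseteq M\), and \(\dim M\otimes k(v)=3\). Together these force \(\fm_vM=\fm_vQ\), so \(H^0(M)\to M\otimes k(v)\) is an isomorphism and Nakayama applies.

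The gap is off the vertex. There you use the explicit matrix \(\begin{pmatrix}u&w&0\\0&u&w\end{pmatrix}\). Your remedy for the ``main obstacle'' only shows that \(H^0(M)=\ker\phi\) is intrinsic, where \(\phi\colon H^0(Q)\to\C_v\). It does not show that this kernel has the displayed form, and generation off \(v\) genuinely depends on \(\phi\). For example, the functional \((\alpha u+\beta w,\gamma u+\delta w)\mapsto\beta\) has kernel spanned by \((u,0),(0,u),(0,w)\), which has rank one along the ruling \(u=0\) away from \(v\).

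The missing observation is that \(\phi\), regarded as a pairing on \(H^0(\cO_E(1))\otimes\C^2\), is nondegenerate. In the local model it kills \((u,0)\), \((0,w)\), and \((w,0)+(0,u)\). Its matrix with respect to \(\{u,w\}\) and the standard basis of \(\C^2\) is therefore \(\begin{pmatrix}0&-1\\1&0\end{pmatrix}\) up to scalar.

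Changing identifications cannot destroy this nondegeneracy. Any two isomorphisms \(Q_v\cong M_1^{\oplus2}\) differ by an element of \(GL_2(\widehat{\cO}_{E,v})\), because \(\operatorname{End}(M_1)=\widehat{\cO}_{E,v}\). Such an element acts on the fiber at \(v\) through a constant matrix on the \(\C^2\) factor. Matching \(H^0(\cO_E(1))\) with \(M_1\otimes k(v)\) is only a linear change of \(u,w\). Neither operation changes the rank of the pairing.

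A constant change of basis of the \(\C^2\) factor then brings \(\ker\phi\) to \(\langle(u,0),(w,u),(0,w)\rangle\). This is what the paper's phrase ``after a constant change of basis, the kernel has the displayed form'' encodes. With this step added, your minor computation finishes the proof.
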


\begin{proof}
    The three local generators \((u,0),(w,u),(0,w)\) globalize. Indeed, the four sections of \(Q\cong\cO_E(1)^{\oplus2}\) map onto \(Q/M\cong\C_v\). After a constant change of basis, the kernel has the displayed form. These sections have rank two away from \(v\) and minimally generate \(M\) at \(v\). Hence \(M\) is generated by three global sections.
\end{proof}

We now verify \Cref{ass:A9}. The tautological quotient line bundle on \(D = \PP_E(M)\) is globally generated and nef. Every curve of the special \(f\circ\beta\)-fiber lies in \(D\), and off the fiber \(\cL\) is trivial on contracted curves. Thus \(\cL\) is \(f\circ\beta\)-nef. The blow-up is smooth and therefore klt.

\begin{theorem}
    \label{thm:c3-smooth}
    The C3 fiber is scheme-theoretically reduced and \(Y\) is smooth at \(P\).
\end{theorem}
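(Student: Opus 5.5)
We follow the pattern of the proof of \Cref{thm:c2}. First, \Cref{ass:A2,ass:A9} have just been verified in the C3 case. So \Cref{lem:fiber} gives \(\fm_P\cO_X=\cI\) near \(E\), which is the reducedness of the scheme-theoretic fiber. It remains to compute \(\widehat{\cO}_{Y,P}\). By formal functions and the cofinality of the \(J\)-adic and \(\cI\)-adic systems, this ring is \(\varprojlim H^0(E_n,\cO_{E_n})\). We will show that this limit is \(\C\llbracket t_1,t_2,t_3\rrbracket\), and regularity of \(\cO_{Y,P}\) then follows as before.

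The key input is a resolution of \(M\) analogous to the one in the C2 case. By the lemma just proved, \(M\) is generated by three global sections \(s_1,s_2,s_3\). These are images of functions \(f^*t_i\) with \(t_i\in\fm_P\), by \Cref{lem:fiber}. Their kernel \(K\) in \(\cO_E^{\oplus3}\to M\) has rank one. We identify it using \(Q\cong\cO_E(1)^{\oplus2}\), where \(\cO_E(1)\) is the degree-one Weil sheaf on \(S_3\), and the description \(M\subset Q\) with cokernel \(\C_v\). On \(\PP(1,1,3)\), the three sections \((u,0),(w,u),(0,w)\) of \(\cO(1)^{\oplus 2}\) satisfy the single linear syzygy with coefficients \((w^2,-uw,u^2)\), up to sign. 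This shows \(K\cong\cO_E(-2)\), which is \(\cO_E(-H)\) twisted by \(\cO_E(1)\), hence a reflexive rank-one sheaf. We verify this on the resolution \(S\) using \(V\cong q^*\cO(1)^{\oplus2}\): there \(\rho^*M\to V\) is surjective, and the kernel is a line bundle of class \(-2l\). Then \(K=\rho_*\cO_S(-2l)\), and \(R^1\rho_*\) vanishes. In fact we need only the exact sequence \(0\to K\to\cO_E^{\oplus3}\to M\to0\) together with the induced Koszul-type sequences
\[
0\to K\otimes\Sym^{k-1}\C^3\to\Sym^k\C^3\otimes\cO_E\to\Sym^kM\to0 ,
\]
which must be taken modulo torsion where needed.

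Next we compute the graded pieces. Since \(E\) is not lci at \(v\), we cannot use \(\cI^k/\cI^{k+1}\cong\Sym^kM\) directly. Instead we use the surjection \(\Sym^kM\twoheadrightarrow\cI^k/\cI^{k+1}\). The target is torsion-free away from \(v\), and we plan to identify it with \(\rho_*\) of the image of \(\Sym^k\rho^*M\) in \(\Sym^kV\). In the local model, \(\cI^k/\cI^{k+1}\) is the degree-\(k\) part of the normal cone. Since \(T=\PP_X(\cI)\) is smooth and the incidence equations are linear, this normal cone equals \(\mathrm{Spec}\,\Sym M\), so \(\cI\) is of linear type. From \(V=q^*\cO(1)^{\oplus2}\) we get \(\Sym^kV=q^*\Sym^k(\cO(1)^2)\). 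The sections of the image on \(S\) form exactly \(\Sym^k\C^3\), namely degree-\(k\) polynomials in \(s_i\) modulo nothing, because the \(s_i\) generically span a 3-dimensional space. We also need \(H^1=0\). This comes from the displayed sequence and the vanishing \(H^1(E,\cO_E(-2)\otimes\ldots)\), which is computed on \(S\) via \(Rq_*\).

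With \(H^1(E,\cI^k/\cI^{k+1})=0\) and \(H^0=\Sym^k\C^3\), compatible with multiplication, induction on the sequences \(0\to\cI^n/\cI^{n+1}\to\cO_{E_{n+1}}\to\cO_{E_n}\to0\) gives \(H^0(E_n,\cO_{E_n})\cong\C[t_1,t_2,t_3]/(t_1,t_2,t_3)^n\). Passing to the limit gives \(\widehat{\cO}_{Y,P}\cong\C\llbracket t_1,t_2,t_3\rrbracket\).

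The main obstacle is controlling \(\cI^k/\cI^{k+1}\) at the vertex, specifically the linear-type and torsion-freeness claim. I would first try to deduce it from \(T=\PP_X(\cI)\) being the incidence bundle. Alternatively, one can shortcut with a dimension argument: the map \(\C[t_1,t_2,t_3]\to\widehat{\cO}_{Y,P}\) is surjective, because \(\fm_P\cO_X=\cI\) and the classes of the \(t_i\) generate \(M\). Since \(\dim Y=3\), this map is then an isomorphism, which bypasses the \(H^1\) computation entirely. That surjectivity argument is the route I would actually write.
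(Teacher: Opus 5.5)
Your reducedness step is the paper's: \Cref{lem:fiber}, under \Cref{ass:A2,ass:A9}, gives \(\fm_P\cO_X=\cI\). The gap is in the smoothness step you say you would actually write. Surjectivity of \(\C\llbracket t_1,t_2,t_3\rrbracket\to\widehat{\cO}_{Y,P}\) does not follow from \(\fm_P\cO_X=\cI\) together with generation of \(M\) by the classes of the \(f^*t_i\). That the \(f^*t_i\) generate the ideal \emph{sheaf} \(\fm_P\cO_X\) only gives local expressions \(f^*a=\sum g_i f^*t_i\). To make the \(g_i\) global, so that they descend via \(f_*\cO_X=\cO_Y\), you need a cohomology vanishing for the syzygies along the fiber.

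Here is a counterexample to the inference. Take the minimal resolution \(f:X\to Y=\{xz=y^2\}\) of an \(A_1\) point. Then \(\fm_P\cO_X=\cO_X(-E)=\cI_E\) is reduced, and \(f^*x,f^*z\) generate it near \(E\). Nevertheless \(y\notin(x,z)\), and \(\C\llbracket x,z\rrbracket\to\widehat{\cO}_{Y,P}\) is not surjective. So the dimension count cannot bypass the cohomology.

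Along your first route, you would need \(H^0(E,\cI^k/\cI^{k+1})\) to be spanned by degree-\(k\) monomials in the \(t_i\) for every \(k\). That is exactly the part you left as a plan, including the exactness and torsion issues at \(v\). Also, your justification of linear type from \(T=\PP_X(\cI)\) only compares the Proj's of \(\Sym(\cI)\) and of the Rees algebra, not the graded algebras themselves.

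The missing ingredient is the acyclicity of the sheaf you already found. Your kernel \(K\cong\cO_E(-2)\) is isomorphic to \(\cG=\omega_E(H)\), and \(R\Gamma(E,\cG)=0\) by \eqref{eq:vanishing}. The paper uses this in one stroke. Let \(\cK_Y\) be the Koszul complex of \(t_1,t_2,t_3\) and put \(\cK_X=Lf^*\cK_Y\). Its only cohomology sheaves are \(\cO_E\) in degree \(0\) and \(\cG\) in degree \(-1\), so \(Rf_*\cK_X\cong\C_P\). On the other hand, the projection formula and \(Rf_*\cO_X\cong\cO_Y\) give \(Rf_*\cK_X\cong\cK_Y\). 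Hence \(\cO_Y/(t_1,t_2,t_3)\cong\C_P\), that is, \((t_1,t_2,t_3)=\fm_P\), and regularity follows from \(\dim\cO_{Y,P}=3\).

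This is precisely what fails in the \(A_1\) example. There \(\mathcal H^{-1}(\cK_X)\cong\cO_E(E)\cong\cO_{\PP^1}(-2)\) has \(H^1=\C\), and this contributes the missing generator \(y\). The route through the Koszul complex also avoids computing any graded piece \(\cI^k/\cI^{k+1}\) at the vertex.
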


\begin{proof}
    By \Cref{lem:fiber}, we have \(\fm_P \cO_X = \cI\), with three lifted base functions \(s_1, s_2, s_3\) generating \(\cI\) near \(E\).

    Let \(\cK_Y\) be the Koszul complex of \(s_1,s_2,s_3\) on \(Y\), and put \(\cK_X=Lf^*\cK_Y\). The complex \(\cK_X\) has precisely
    \[\mathcal H^0(\cK_X)\cong\cO_E,\qquad \mathcal H^{-1}(\cK_X)\cong\cG=\omega_E(H)\]
    as its nonzero cohomology sheaves. Since \(R\Gamma(\cG)=0\), we have \(Rf_*\cK_X\cong\C_P\).

    The projection formula and \(Rf_*\cO_X\cong\cO_Y\) give
    \(Rf_*\cK_X\cong\cK_Y.\) In particular, \(\cO_Y/(s_1,s_2,s_3)\cong\C_P\). Thus the three elements generate \(\fm_P\), and the local ring has dimension three and embedding dimension at most three, hence is regular.
\end{proof}

\subsection{C5 case}

Finally, we consider type C5 in \cite[Theorem 0.8]{Kachi1997}, where \(E=A\cup_LB\), \(A\cong\PP^2\), \(B\cong S_2=\PP(1,1,2)\), and the common line passes through the vertex \(v\) of \(B\). The polarizations are \(\cO_A(H)\cong\cO_A(1)\) and \(\cO_B(H)\cong\cO_B(2)\).

As in the C3 case, we first consider the equations locally at \(v\). In suitable coordinates,
\[\cI=(be,ce,b^2-ac),\qquad \cI_{A/X}=(b,c),\qquad \cI_{B/X}=(e,b^2-ac),\qquad \cI_{L/X}=(b,c,e).\]
The two relations are the columns of
\[\begin{pmatrix}
    c & b \\
    -b & -a \\
    0 & -e \\
\end{pmatrix}\]
This is the minimal Cohen presentation, and the embedding dimension is again \(4\). The matrix above is a Hilbert--Burch matrix for this height-two ideal, giving the resolution in \Cref{ass:A1}. The embedding is regular away from \(v\) and is Cohen--Macaulay of codimension two at \(v\). Thus \Cref{ass:A1} holds.

We now verify \Cref{ass:A2}. The symmetric-algebra incidence equations in projective coordinates \([u_0:u_1:u_2]\) are
\[cu_0-bu_1=0,\qquad bu_0-au_1-eu_2=0.\]
They form a regular sequence in the polynomial algebra. The charts \(u_0=1\) and \(u_1=1\) are smooth. In the \(u_2=1\) chart, the incidence variety is irreducible and has only transverse ordinary double points. Thus it agrees with the graph on a dense open set, so \(T=\operatorname{Bl}_E X=\PP_X(\cI)\) and is normal and Gorenstein. Moreover, \(D=\PP_E(M)\) and \(\cL|_D\cong\cO_D(1)\). The discrepancy formula holds on the regular embedding locus and the fiber over \(v\) is a plane of codimension two in \(T\), so the formula extends to \(K_T=\beta^*K_X+D\). This verifies \Cref{ass:A2}.

For \(i\in\{A,B\}\), put \(M_i=M|_i\) and \(Q_A=M_A^{\vee\vee}\). Then we have
\[M_A \cong \cO_A \oplus \cI_{v/A},\quad
Q_A/M_A\cong\C_v,\quad
M_B \cong \cO_B \oplus M_1\]
where \(M_1\) is the weight-one reflexive module of the scalar \(\mu_2\)-quotient. The free generator on \(A\) is \(be\), whereas the free generator on \(B\) is \(b^2-ac\).

Resolve both components
\[\rho_A: S_A=\operatorname{Bl}_v A\cong\bbF_1 \to A,\quad 
 \rho_B: S_B\cong\bbF_2 \to B,\]
and \(V_i=(\rho_i^*M_i)^{\mathrm{tf}}\). For \(i\in\{A,B\}\), let \(\Delta_i,l_i\) be the negative section and a ruling, and let \(q_i:S_i\to\PP^1\). We use \(\Delta_i\) for these exceptional sections so that \(C_A,C_B\) remain available for the components of an anticanonical slice. The local modules show that \(V_i\) is locally free and
\[V_i|_{\Delta_i}\cong\cO \oplus \cO(1),\quad
\rho_A^*(H|_A) = \Delta_A + l_A,\quad
\rho_B^*(H|_B) = \Delta_B + 2l_B.\]

\begin{lemma}
\label{lem:c5-nzd}
    A hyperplane equation containing neither surface component is a nonzerodivisor on \(\cO_E\), \(M\), and \(\cO_{E_2}\). The module \(M\) has no finite-length subsheaf supported at \(v\).
\end{lemma}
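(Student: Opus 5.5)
The plan is to derive both assertions from the local structure of \(E\) at \(v\), where the resolution of \Cref{ass:A1} holds, and from the fact that \(E\) is Cohen--Macaulay of pure dimension two. Away from \(v\) the embedding is regular, so \(M\) is locally free there.

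\emph{The nonzerodivisor claims.} Let \(h\) be a hyperplane equation containing neither \(A\) nor \(B\).

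1. On \(\cO_E\): the surface \(E\) is reduced with associated primes the generic points of \(A\) and \(B\). Since \(h\) vanishes on neither component, it is a nonzerodivisor on \(\cO_E\).

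2. On \(M\): once we know that \(M\) has no finite-length subsheaf, the associated points of \(M\) lie among the generic points of \(A\) and \(B\). Indeed, \(M\) is locally free on \(E\setminus\{v\}\), so any embedded associated point of \(M\) would have to be the closed point \(v\). The same argument as in step 1 then shows that \(h\) is a nonzerodivisor on \(M\).

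3. On \(\cO_{E_2}\): use the sequence \(0\to M\to\cO_{E_2}\to\cO_E\to0\). If \(h\) is a nonzerodivisor on both outer terms, it is one on the middle term by the snake lemma.

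So everything reduces to the second statement of the lemma.

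\emph{No finite-length subsheaf of \(M\) at \(v\).} This is a local computation in the completed ring \(R=\C\llbracket a,b,c,e\rrbracket\) with \(I=(be,ce,b^2-ac)\). It is equivalent to \(\operatorname{depth}_v M\ge1\), that is, \(H^0_{\fm}(I/I^2)=0\).

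The Hilbert--Burch resolution \(0\to R^2\xrightarrow{\phi}R^3\to I\to0\) gives a presentation
\[
(R/I)^2\xrightarrow{\bar\phi}(R/I)^3\to I/I^2\to0.
\]
I would first try to use the splitting \(M_A\cong\cO_A\oplus\cI_{v/A}\) and \(M_B\cong\cO_B\oplus M_1\), with the natural map \(M\to M_A\oplus M_B\). The target has depth \(\ge1\) at \(v\): its summands are torsion-free on the integral components, and \(\cI_{v/A}\) is torsion-free. It therefore suffices to show that this map is injective, i.e.\ that no section of \(M\) is killed by restriction to both components.

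The kernel is \(\bigl(I\cap I_A I\cap I_B I\bigr)/I^2\), more precisely the elements of \(I\) lying in \(I_AI+I^2\) and in \(I_BI+I^2\), taken modulo \(I^2\). Here I would compute with the three generators \(g_1=be\), \(g_2=ce\), \(g_3=b^2-ac\) and the two syzygies given by the columns of the Hilbert--Burch matrix. Concretely, write a class as \(\sum r_ig_i\) with \(r_i\) modulo \(I\) and the syzygy module. Vanishing on \(A=V(b,c)\) and on \(B=V(e,b^2-ac)\) forces conditions on the \(r_i\) restricted to each component. Then one checks, by a Gröbner or monomial argument, that the only solutions are combinations of the syzygies.

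An alternative route, which I would use if this gets messy, is to compute \(\operatorname{Ext}^3_R(I/I^2,R)\), or equivalently to check depth via the \(\mathbb G_m\)-grading. The ideal is homogeneous for a suitable weighting, since \(a,b,c,e\) can be given weights making \(b^2-ac\), \(be\), \(ce\) homogeneous. Then \(H^0_\fm(M)\) can be computed degreewise, e.g.\ by Macaulay2-style reasoning: if \(e\) is a nonzerodivisor on \(M\) modulo the \(A\)-torsion, conclude.

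\emph{Main obstacle.} I expect the injectivity of \(M\to M_A\oplus M_B\) at \(v\) to be the hard step. It amounts to showing that \(I/I^2\) has no \(\fm\)-torsion, which is where the non-lci point \(v\) genuinely matters. Everything else is formal.
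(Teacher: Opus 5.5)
There is a genuine gap. Your formal reductions are sound:
\begin{itemize}
\item reducedness of \(E\) handles \(\cO_E\);
\item since \(M\) is locally free on the Cohen--Macaulay scheme \(E\setminus\{v\}\), the only possible embedded associated point of \(M\) is \(v\);
\item the extension \(0\to M\to\cO_{E_2}\to\cO_E\to0\) transfers the nonzerodivisor property to \(\cO_{E_2}\);
\item locally at \(v\), the target \(M_A\oplus M_B\cong\cO_A\oplus\cI_{v/A}\oplus\cO_B\oplus M_1\) is torsion-free on the components.
\end{itemize}
But the only non-formal assertion in the lemma is never established. That assertion is \(H^0_{\fm}(I/I^2)=0\), equivalently injectivity of \(M\to M_A\oplus M_B\) at \(v\). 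You only say that a Gr\"obner or monomial check will show it, and you flag it yourself as the main obstacle. As written, the proposal proves everything except the actual content of the lemma.

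Your fallback is also mis-indexed. Over the four-dimensional regular ring \(R\), local duality gives \(H^0_{\fm}(M)^\vee\cong\Ext^4_R(M,R)\). So depth at least one means \(\operatorname{pd}_RM\leq3\), whereas \(\Ext^3_R(M,R)\) controls \(H^1_{\fm}(M)\).

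The paper closes the gap with a projective-dimension bound.
\begin{itemize}
\item The incidence relations \(cu_0-bu_1\) and \(bu_0-au_1-eu_2\) form a regular sequence in \(R[u_0,u_1,u_2]\). Together with irreducibility of the incidence variety, these are the facts behind \(T=\PP_X(\cI)\) in \Cref{ass:A2}, so \(I\) is of linear type.
\item The degree-two Koszul strand \(0\to R\to R^6\to R^6\to\Sym^2_R(I)\cong I^2\to0\) then gives \(\operatorname{pd}_R(R/I^2)\leq3\).
\item Auslander--Buchsbaum gives \(\operatorname{depth}\cO_{E_2,v}\geq1\). Since \(M\subset\cO_{E_2}\), both the nonzerodivisor claims and the finite-length claim follow at once.
\end{itemize}
Note that the paper argues from \(\cO_{E_2}\) down to \(M\), the reverse of your order.

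Your own route can also be finished by hand. Put \(I_A=(b,c)\), \(I_B=(e,b^2-ac)\), and \(g_1=be\), \(g_2=ce\), \(g_3=b^2-ac\).
\begin{itemize}
\item A class \(\sum r_ig_i\) dies in \(M_A\) if and only if \(r_1\in I_A\) and \(er_2-ar_3\in I_A\).
\item It dies in \(M_B\) if and only if \(r_3\in I_B\) and \(br_1+cr_2\in I_B\).
\item Subtracting an element of \(IR^3\) and adding a multiple of the syzygy \((b,-a,-e)\) makes \(r_3=0\). Then \(r_1,r_2\in I_A\), and the class equals \(e(br_1+cr_2)\) with \(br_1+cr_2\in I_A^2\cap I_B\).
\item Comparing coefficients modulo \((b,c)^2\), a free \(\C\llbracket a,e\rrbracket\)-module with basis \(1,b,c\), shows \(I_A^2\cap I_B=I_AI\).
\item Finally \(eI_AI\subseteq I^2\), so the kernel is zero.
\end{itemize}
Either argument supplies the missing step; without one of them the proof is incomplete.
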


\begin{proof}
    We first show that \(\cO_{E_2}\) has no embedded point at \(v\). We take the local setup, with
    \[R=\widehat{\cO}_{X,v}\cong\C\llbracket a,b,c,e\rrbracket,\qquad I=\cI_{E/X,v}R.\]
    The degree-two strand of the Koszul resolution of the two incidence relations gives a resolution
    \[0 \to R \to R^6 \to R^6 \to \Sym^2_R(I)\cong I^2 \to 0\]
    of length two.

    Since \(R\) is a regular local ring,
    \(\operatorname{pd}_R(I^2) \leq 2.\)
    From \[0\to I^2\to R\to R/I^2\to0\], we obtain
    \(\operatorname{pd}_R(R/I^2) \leq 3.\)
    The Auslander--Buchsbaum theorem then gives
    \(\operatorname{depth}_R(R/I^2) \geq 1,\)
    and therefore
    \(\operatorname{depth}_v \cO_{E_2} \geq 1.\)
    Thus the maximal ideal corresponding to \(v\) is not an associated prime of \(\cO_{E_2}\).
    
    Therefore, if \(h\) vanishes identically on neither \(A\) nor \(B\), multiplication by \(h\) on \(\cO_{E_2}\) is injective. It follows that \(h\) is also a nonzerodivisor on \(M\) and \(\cO_E\).

    The depth calculation also excludes a finite-length subsheaf of \(M\) supported at \(v\). Indeed, such a submodule would contribute to
    \(H^0_{\fm_v}(\cO_{E_2, v})\)
    and would force depth zero, contradicting the preceding calculation.
\end{proof}

Since a general hyperplane avoids \(v\), Bertini's theorem and \Cref{lem:c5-nzd} show that \Cref{ass:A3} holds.

Adjunction on the resulting two-component crepant curve gives conormal determinant degree one on each component. Write \(c_1(V_i)=a\Delta_i+bl_i\). Its degree on a general anticanonical slice is \(b=1\), while \(V_i|_{\Delta_i}\cong\cO\oplus\cO(1)\) gives \(-ma+b=1\), where \(m=1\) or \(2\). Hence
\(c_1(V_i)=l_i.\)

Fix an \(H\)-degree-one line in either component. A relative section \(D_{h'}\) can be chosen to contain it without containing either surface. For \(C'=E\cap D_{h'}\), \Cref{lem:c5-nzd} shows that \(h'\) is a nonzerodivisor. Since \(H^0(\cO_E)=\C\) and \(\cO_E(-H)\) is acyclic, the hyperplane sequence gives \(H^0(\cO_{C'})=\C\). The first-neighborhood vanishing follows from \Cref{lem:aw-slice-vanishing}. Thus \Cref{ass:A4} is verified.

Now we consider the bounds on splitting types. We introduce the following lemma.
\begin{lemma}
    On every ruling, \(V_i|_{l_i}\) is isomorphic to \(\cO^{\oplus2}\) or \(\cO(-1)\oplus\cO(1)\). On a general anticanonical slice component, the restriction is isomorphic to \(\cO\oplus\cO(1)\) or \(\cO(-1)\oplus\cO(2)\). The latter list also holds for every plane line avoiding \(v\).
\end{lemma}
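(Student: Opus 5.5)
We follow the pattern of \Cref{lem:c3-split}, combining the vanishing $H^1(C',M|_{C'})=0$ from \Cref{ass:A4} with the determinant computation $c_1(V_i)=l_i$.

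\emph{Rulings.} Fix a ruling $l_i\subset S_i$ and let $\rho_i(l_i)$ be its image. This image is an $H$-degree-one line on $A$ through $v$, or a ruling of the quadric cone $B$ through $v$. Choose an auxiliary slice $D_{h'}$ containing $\rho_i(l_i)$ and neither component, as in the verification of \Cref{ass:A4}. Since $h'$ is a nonzerodivisor on $\cO_{E_2}$ by \Cref{lem:c5-nzd}, the first-neighborhood sequence $0\to M|_{C'}\to\cO_{E_2\cap D_{h'}}\to\cO_{C'}\to0$ together with $H^0(\cO_{C'})=\C$ gives $H^1(C',M|_{C'})=0$. Because $C'$ is a curve, $H^1$ is right exact, so $H^1(\rho_i(l_i),M|_{\rho_i(l_i)})=0$.

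Next we compare with $V_i|_{l_i}$. The map $\rho_i^*M_i\twoheadrightarrow V_i$ is surjective; this is checked on the local modules at $v$ exactly as in the C3 case, using the generators $be$ and $b^2-ac$ together with the weight-one module. Restricting to $l_i$, which maps isomorphically onto its image, gives a surjection $M|_{\rho_i(l_i)}\twoheadrightarrow V_i|_{l_i}$. Hence $H^1(l_i,V_i|_{l_i})=0$, so both splitting degrees are at least $-1$. Their sum is $c_1(V_i)\cdot l_i=l_i\cdot l_i=0$. The only possibilities are therefore $(0,0)$ and $(-1,1)$.

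\emph{General slice components.} For a general slice $C=C_A\cup C_B$ avoiding $v$, the curve lies in the locus where the embedding is regular, and $V_i$ agrees with $M_i$ there. The vanishing $H^1(C,M|_C)=0$ restricts to each component, so both degrees on $C_i$ are at least $-1$. The degree of $V_i$ on $C_i$ is $c_1(V_i)\cdot\rho_i^*H=l_i\cdot(\Delta_i+m\,l_i)=1$. This leaves $(0,1)$ and $(-1,2)$.

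\emph{Plane lines avoiding $v$.} A line in $A$ not through $v$ is a section of $H$-degree one. Choose a slice through it containing neither component; the resulting $C'$ is the line together with a conic on $B$, meeting at one point of $L$. Again $H^1(M|_{C'})=0$ restricts to the line, and the degree there is $1$, which gives the same list.

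The main obstacle is the surjectivity $M|_{\rho(l)}\twoheadrightarrow V|_l$ for lines through the vertex $v$. There one must check on the local modules that $M$ pulled back to the resolution maps onto its torsion-free part compatibly with restriction to the ruling, and that the torsion of $M|_{\rho(l)}$ is killed. I would verify this directly in the coordinates $(a,b,c,e)$, using that the relevant maximal minors have no common zero on $\Delta_i$.
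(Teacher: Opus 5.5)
Your proposal is correct and follows essentially the paper's own argument. The paper likewise takes \(H^1(C',M|_{C'})=0\) on an auxiliary slice through the line and passes to the quotient on that line (or to \(M|_{C_i}\) for a general slice component). It then reads off the splitting types from \(c_1(V_i)\cdot l_i=0\) and \(c_1(V_i)\cdot\rho_i^*H=1\).

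The step you flag as the main obstacle is automatic here. In the C5 case \(V_i\) is defined as \((\rho_i^*M_i)^{\mathrm{tf}}\), so \(\rho_i^*M_i\to V_i\) is surjective by definition, and restricting to \(l_i\cong\rho_i(l_i)\) keeps it surjective. No local check is needed, unlike in C3, where \(V\) was built from the reflexive hull \(Q\).
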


\begin{proof}
    We have \(H^1(C',M|_{C'})=0\) for a slice containing a prescribed line. Passing to the quotient supported on that line gives \(H^1(l_i,V_i|_{l_i})=0\). The determinant degrees then give the stated possibilities.
\end{proof}

The surjection \(\rho_i^*M_i\twoheadrightarrow V_i\), restricted to a ruling, identifies the actual torsion-free conormal restriction, as in the C3 case:
\begin{equation}
\label{eq:c5-vm}
    (M|_{\rho_i(l_i)})^{\mathrm{tf}} \cong V_i|_{l_i}.
\end{equation}

We now verify \Cref{ass:A5}. By the preceding lemma, the splitting type is \((0,0)\) or \((-1,1)\) for lines through \(v\), and \((0,1)\) or \((-1,2)\) for a plane line avoiding \(v\). In each negative case there is a unique quotient to \(\cO(-1)\), up to scalar, and in the other cases there is none.

\begin{lemma}
\label{lem:c5-splitting}
    If the negative generic splitting type occurs on component \(A\) or \(B\), then
    \[V_i\cong\cO(\Delta_i + 2 l_i) \oplus \cO(- \Delta_i - l_i).\]
    For the plane \(A\), it implies that
    \[M_A = \cI_{v/A}(2) \oplus \cO_A(-1)\]
    and
    \[Q_A\cong\cO_A(2) \oplus \cO_A(-1).\]
\end{lemma}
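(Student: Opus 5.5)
The plan is to follow the C3 argument behind \Cref{lem:c3-exact}, carried out on each resolved component \(S_i\cong\bbF_m\) with \(m=1\) for \(A\) and \(m=2\) for \(B\). Suppose the generic splitting type on general slice components is \((-1,2)\).

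\emph{Step 1 (the destabilizing line subbundle).} The general slice components are sections of \(\rho_i^*H=\Delta_i+m l_i\) that avoid \(\Delta_i\). On \(S_i\setminus\Delta_i\), which is the total space of \(\cO_{\PP^1}(m)\), they are affine translates of graphs of sections in a base-point-free subspace \(U\subset H^0(\PP^1,\cO(m))\). On each such graph the degree-two summand of \(V_i\) is the unique maximal line subsheaf. Its second fundamental form along the evaluation fibers has components \(\cO(2)\to\cO(-1+a_j)\) with \(a_j\le m\le2\), and these vanish because \(2>-1+a_j\). So \Cref{prop:cf-descent} descends it to a saturated line subbundle of \(V_i\). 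Serre's correspondence then gives
\[0\to\cO_{S_i}(a\Delta_i+2l_i)\to V_i\to\cI_{Z}\otimes\cO_{S_i}(-a\Delta_i-l_i)\to0,\]
where \(Z\) is zero-dimensional and the quotient is determined by \(c_1(V_i)=l_i\). The sub has degree \(2-ma+a\) on a general slice; since this must equal \(2\), the ruling coefficient is fixed at \(2+(m-1)a\). I will recompute this and adjust. Next I bound \(a\) in two ways. Restricting to \(\Delta_i\), where \(V_i|_{\Delta_i}\cong\cO\oplus\cO(1)\) and the map is nonzero generically, bounds the degree on \(\Delta_i\) by \(1\). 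Restricting to a general ruling, where the splitting is \((0,0)\) or \((-1,1)\), gives \(a\le1\). Together these force \(a=1\). The ruling argument from the C3 case then gives \(Z=\emptyset\): if \(Z\) met a ruling, the saturation there would have degree \(\ge2\).

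\emph{Step 2 (splitting).} The extension class lies in \(H^1(S_i,\cO(2\Delta_i+3l_i))\). Its dual is \(H^1(\cO(-4\Delta_i-3l_i+K_{S_i}))\), and I will compute it by \(Rq_{i*}\). The twist \(\cO(2\Delta_i+3l_i)\) restricts to \(\cO(2)\) on rulings, so \(R^1q_*=0\). Then \(q_*\) is \(\cO(3)\otimes\Sym^2(\cO\oplus\cO(-m))\), which for \(m\le2\) has summands of degree at least \(-1\), so \(H^1\) vanishes. Hence the sequence splits, and \(V_i\cong\cO(\Delta_i+2l_i)\oplus\cO(-\Delta_i-l_i)\). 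I expect this vanishing, together with getting the exact coefficients in Step 1 right, to be the main computational obstacle.

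\emph{Step 3 (the plane).} For \(A\), push forward along \(\rho_A\colon\bbF_1\to\PP^2\), using \(\Delta_A=\) exceptional curve and \(l_A=h-\Delta_A\), where \(h\) is the hyperplane class. Then \(\Delta_A+2l_A=2h-\Delta_A\) and \(-\Delta_A-l_A=-h\). So \(\rho_{A*}\) of the first summand is \(\cI_{v/A}(2)\) and of the second is \(\cO_A(-1)\), with reflexive hulls \(\cO_A(2)\) and \(\cO_A(-1)\). Since \(M_A\) has no finite-length subsheaf by \Cref{lem:c5-nzd}, the map \(M_A\to\rho_{A*}V_A\) is injective; it is an isomorphism away from \(v\), and both sides have colength-one quotient \(\C_v\) inside \(Q_A\) (\(M_A\cong\cO_A\oplus\cI_{v/A}\) locally). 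This identifies \(M_A=\cI_{v/A}(2)\oplus\cO_A(-1)\). Taking double duals gives \(Q_A\cong\cO_A(2)\oplus\cO_A(-1)\).
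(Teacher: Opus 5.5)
Your proposal follows the paper's proof step for step: descent of the degree-two line subsheaf via \Cref{prop:cf-descent}, the Serre-type sequence with \(a=1\) forced by the \(\Delta_i\)- and ruling bounds and \(Z=\emptyset\), the vanishing of \(H^1(\bbF_m,\cO(2\Delta_i+3l_i))\), and pushforward along \(\rho_A\) via \(\Delta_A+2l_A=2\rho_A^*h-\Delta_A\) and \(-\Delta_A-l_A=-\rho_A^*h\). The one slip is your slice-degree check, where you intersected with \(\Delta_i+l_i\) rather than \(\Delta_i+ml_i\): in fact \((a\Delta_i+2l_i)\cdot(\Delta_i+ml_i)=2\) for every \(a\), so the sequence you display is already correct and needs no adjustment. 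Also, in Step 3 the torsion-freeness of \(M_A\) comes from the local form \(M_A\cong\cO_A\oplus\cI_{v/A}\), not from \Cref{lem:c5-nzd}, which concerns \(M\).
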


\begin{proof}
    Suppose the generic splitting type is \((-1,2)\) on \(S_i=\bbF_m\), with \(m=1\) or \(2\). As in \Cref{lem:c3-exact}, use the incidence of graphs in the total space of \(\cO_{\PP^1}(m)\). Its evaluation kernel splits as \(\bigoplus_j\cO(-a_j)\), with \(1\leq a_j\leq m\). The second fundamental forms of the maximal degree-two line subsheaf are
    \[\cO(2) \to \cO(-1 + a_j)\]
    and hence vanish. By \Cref{prop:cf-descent}, the line subsheaf descends and saturates to
    \[0\to\cO(a\Delta_i+2l_i)\to V_i\to\cI_{Z/S_i}\otimes\cO(-a\Delta_i-l_i)\to0,\]
    where \(Z\) is zero-dimensional. Restriction at the generic point of \(\Delta_i\) gives \(2-ma\leq1\), while the ruling bound gives \(a\leq1\). Thus \(a=1\). A point of \(Z\) on a ruling would increase the saturated ruling degree above one, contradicting the preliminary bounds on splitting types. Hence \(Z=\emptyset\).

    The extension group is \(H^1(\bbF_m,\cO(2\Delta_i+3l_i))\), which vanishes for \(m=1,2\). Thus \(V_i\) is a direct sum. On \(\bbF_1\), we have \(\Delta_A+2l_A=2\rho_A^*h-\Delta_A\) and \(-\Delta_A-l_A=-\rho_A^*h\). Pushing forward then gives the final two identities.
\end{proof}

We now do some useful local computations. Let \(\pi: \hat{X} = \operatorname{Bl}_A X \to X\) have exceptional divisor \(F\). The strict transform \(\hat{B}\) is \(\bbF_2\), which may be identified with the resolution \(S_B\). We retain \(\Delta_B\) for its negative section and write \(\gamma_0\) for the strict transform of the common line \(L\). In the \(b\)-chart, set \(c = bt\), we have
\begin{equation}
    \label{c5:b-chart}
    \begin{aligned}
        \cI \cO_{\hat{X}} = b(e, b - at),\quad
\hat{B} = (e, b - at). \\
    \end{aligned}
\end{equation}

On \(\hat B\), the exceptional equation is \(b=at\), so \(\Delta_B=(a=0)\) and \(\gamma_0=(t=0)\). In the \(c\)-chart, set \(b=cu\). Then \(\cI\cO=c(e,cu^2-a)\). These two charts are the standard resolution of the quadric cone. They also prove
\begin{equation}
    \label{eq:c5-blow-up}
    \begin{aligned}
    \cI\cO_{\hat X}=\cO_{\hat X}(-F)\cI_{\hat B/\hat X}, F|_{\hat{B}} = \Delta_B + \gamma_0, \\
    \pi^* H|_{\hat{B}} = \Delta_B + 2 l_B, N^\vee_{\hat{B}/\hat{X}}\cong V_B(\Delta_B + l_B).
    \end{aligned}
\end{equation}

The following proposition uses isolation to exclude the negative generic splitting type.

\begin{proposition}
    The generic splitting type on a quadric slice cannot be \((-1, 2)\), independent of the plane splitting.
\end{proposition}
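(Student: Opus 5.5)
The plan is to repeat the isolation argument of \Cref{lem:c2-pos}, now on the strict transform \(\hat B\subset\hat X=\operatorname{Bl}_AX\). Suppose the generic splitting type on sections of \(\rho_B^*(H|_B)\) is \((-1,2)\). By \Cref{lem:c5-splitting} applied to \(i=B\),
\[
V_B\cong\cO_{S_B}(\Delta_B+2l_B)\oplus\cO_{S_B}(-\Delta_B-l_B).
\]
This splitting uses only the generic behaviour on \(B\). So nothing below depends on what happens on \(A\), which is the meaning of ``independent of the plane splitting.''

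Since \(A\) is smooth, \(\hat X\) is a smooth fourfold, and \(\hat B\cong\bbF_2\) is smooth. The identity \(N^\vee_{\hat B/\hat X}\cong V_B(\Delta_B+l_B)\) from \eqref{eq:c5-blow-up} then gives
\[
N^\vee_{\hat B/\hat X}\cong\cO(2\Delta_B+3l_B)\oplus\cO,\qquad
N_{\hat B/\hat X}\cong\cO\oplus\cO(-2\Delta_B-3l_B).
\]
Next I compute the cohomology of this normal bundle on \(\bbF_2\). The summand \(\cO\) contributes \(h^0=1\) and \(h^1=0\). For the other summand, \(H^0(\cO(-2\Delta_B-3l_B))=0\) because \(H\) has negative degree on it. For \(H^1\), Serre duality with \(K_{\bbF_2}=-2\Delta_B-4l_B\) gives
\[
H^1(\cO(-2\Delta_B-3l_B))^\vee\cong H^1(\cO(-l_B)),
\]
and this vanishes because \(Rq_*\cO(-l_B)=\cO_{\PP^1}(-1)\) is acyclic. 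Hence \(h^0(N_{\hat B/\hat X})=1\) and \(h^1(N_{\hat B/\hat X})=0\). The Hilbert scheme of \(\hat X\) is therefore smooth of dimension one at \([\hat B]\), and \(\hat B\) moves in a nonconstant one-parameter family.

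The isolation argument then runs as in \Cref{lem:c2-pos}. Nearby members are irreducible surfaces in the smooth locus of the family, and they are not contained in the exceptional divisor \(F\), because \(\hat B\not\subset F\). Each member is proper with \(h^0(\cO)=1\), so it maps to a point of the affine base \(Y\). By continuity that point is \(P\), the only point with a two-dimensional fiber. Its image in \(X\) is then an irreducible surface in \(E=A\cup B\) that is not \(A\), because the member is not contained in \(F\). That image must be \(B\), so the member is the strict transform \(\hat B\). This contradicts the family being nonconstant.

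The step I expect to need the most care is justifying \(N^\vee_{\hat B/\hat X}\cong V_B(\Delta_B+l_B)\) globally, including at the vertex where the two charts meet; this is recorded in \eqref{eq:c5-blow-up}. I would double-check it in the \(b\)-chart \eqref{c5:b-chart} and the \(c\)-chart, where \(\hat B=(e,b-at)\) and \((e,cu^2-a)\) respectively, before running the cohomology computation above.
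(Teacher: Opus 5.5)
Your proposal is correct and follows the paper's proof essentially step for step: \Cref{lem:c5-splitting} with \eqref{eq:c5-blow-up} gives \(N_{\hat B/\hat X}\cong\cO\oplus\cO(-2\Delta_B-3l_B)\), acyclicity of the second summand yields a smooth one-dimensional Hilbert family through \(\hat B\), and the isolation of the surface fiber over \(P\) gives the contradiction. The only difference is cosmetic: you check acyclicity by absolute Serre duality on \(\bbF_2\), reducing to \(\cO(-l_B)\), whereas the paper uses relative duality for \(q_B\) to get \(Rq_{B*}\cO(-2\Delta_B-3l_B)\cong\cO_{\PP^1}(-1)[-1]\).
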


\begin{proof}
    If it were, then \Cref{lem:c5-splitting} and \eqref{eq:c5-blow-up} would give
    \[N_{\hat{B}/\hat{X}}\cong\cO_{\bbF_2} \oplus \cO_{\bbF_2}(- 2\Delta_B - 3l_B).\]
    Since the relative canonical bundle is \(\omega_{q_B}\cong\cO_{S_B}(-2\Delta_B-2l_B)\), relative duality gives
    \[Rq_{B*}\cO_{S_B}(-2\Delta_B-3l_B)\cong\cO_{\PP^1}(-1)[-1].\]
    This complex is acyclic. Thus the normal bundle has \(h^0=1\) and \(h^1=0\), so the smoothly embedded \(\hat B\) has a nonconstant one-dimensional Hilbert family. Nearby members are proper smooth irreducible surfaces not contained in \(F\), and therefore remain over \(P\). The special fiber has support \(F\cup\hat B\), so every member must equal \(\hat B\), a contradiction.
\end{proof}

It remains to exclude the negative generic splitting type on the plane. Assume this splitting type until the contradiction below.

\begin{lemma}
    There is a reduced point \(w \in L \backslash \{v\}\) such that
    \[0 \to \cO_A \to N_{A/X} \to \cI_{w/A}(-2) \to 0.\]
    Put \(U = N^\vee_{A/X}(1)\). Then we have exact sequences
    \[0 \to \cO_A(3) \to U \to \cI_{w/A}(1) \to 0\]
    \[0 \to \cO_A(-1) \to \cO_A(3) \oplus \cO_A^{\oplus2} \to U \to 0.\]
\end{lemma}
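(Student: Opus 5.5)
The plan is to compare the conormal bundle \(N^\vee_{A/X}\) of the smooth plane \(A\) with the torsion-free conormal restriction \(Q_A\cong\cO_A(2)\oplus\cO_A(-1)\) given by \Cref{lem:c5-splitting}. The sequence for \(N_{A/X}\) will then be obtained by dualizing. Since \(A\subset X\) is a regular embedding of codimension two, adjunction gives
\[
\det N_{A/X}\cong\omega_A\otimes\omega_X^{-1}|_A\cong\cO_A(-3+1)=\cO_A(-2).
\]
Hence \(N^\vee_{A/X}\cong N_{A/X}(2)\), and the desired sequence is equivalent to
\[
0\to\cO_A(2)\to N^\vee_{A/X}\to\cI_{w/A}\to0 .
\]

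\textbf{Step 1: the comparison map.} The inclusion \(\cI_{E/X}\subset\cI_{A/X}\) induces a map \(M_A\to N^\vee_{A/X}\). It factors through \(Q_A\), since the target is locally free. I would compute it in the local models.
\begin{itemize}
\item Along \(L\setminus\{v\}\), take \(\cI=(z,xy)\) and \(\cI_A=(z,x)\). The image is spanned by \(z\) and \(y\cdot x\), so the cokernel is a line bundle on \(L\).
\item At \(v\), take \(\cI=(be,ce,b^2-ac)\) and \(\cI_A=(b,c)\). The generator \(be\) maps to \(e\cdot b\). The reflexive hull supplies the missing local generators, and \(b^2-ac\) maps to \((b,-a)\).
\end{itemize}
From these models I expect an exact sequence
\[
0\to Q_A\to N^\vee_{A/X}\to\cO_L(d)\to0 .
\]
Comparing determinants, \(c_1(N^\vee)-c_1(Q_A)=2-1=1\), which is consistent with a cokernel supported on the line \(L\). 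The degree \(d\) is fixed by comparing \(c_2\) through Riemann--Roch, once \(c_2(N^\vee_{A/X})\) is known.

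\textbf{Step 2: the quotient by \(\cO_A(2)\).} Composing gives a map \(\cO_A(2)\subset Q_A\to N^\vee_{A/X}\). Its cokernel \(G\) fits into
\[
0\to\cO_A(-1)\to G\to\cO_L(d)\to0 .
\]
I would show that \(G\) is torsion-free. Using the local model at \(v\), the summand \(\cO_A(2)\) is saturated: it comes from the free generator \(b^2-ac\), with local image \((b,-a)\), which does not vanish at \(v\) modulo \(\fm_v\). With \(c_1(G)=0\), this gives \(G\cong\cI_{Z/A}\) for a zero-dimensional \(Z\subset L\).

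The sequence above identifies \(\cI_Z/\cI_L\cI_Z\) modulo torsion with \(\cO_L(d)\). For \(Z\subset L\), this forces \(d=-\operatorname{length}(Z)\), so a \(c_2\)/Euler-characteristic count determines \(\operatorname{length}(Z)\). This is the main obstacle. I need an independent input for \(d\) or \(c_2(N^\vee_{A/X})\). I would try first the isolation argument already used in \Cref{lem:c2-pos} and in the preceding proposition. Alternatively, I would use \(H^1(l,N^\vee|_l)\)-type constraints coming from \(H^1(C',M|_{C'})=0\) on a slice through a line meeting \(L\). The target is \(\operatorname{length}(Z)=1\).

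The local model at \(v\) shows that \(G\) is locally free there, so \(w\neq v\). Length one makes \(w\) reduced. Dualizing the sequence from Step 1 then yields
\[
0\to\cO_A\to N_{A/X}\to\cI_{w/A}(-2)\to0 .
\]

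\textbf{Step 3: the two sequences for \(U\).} Twisting \(0\to\cO_A(2)\to N^\vee_{A/X}\to\cI_{w/A}\to0\) by \(\cO_A(1)\) gives the first sequence
\[
0\to\cO_A(3)\to U\to\cI_{w/A}(1)\to0 .
\]
For the second, write \(w\) as the intersection of two lines. This gives the resolution
\[
0\to\cO_A(-1)\to\cO_A^{\oplus2}\to\cI_{w/A}(1)\to0 .
\]
Since \(H^1(\cO_A(3))=0\), the map \(\cO_A^{\oplus2}\to\cI_{w/A}(1)\) lifts to \(U\). Combining the lift with \(\cO_A(3)\to U\) gives a surjection \(\cO_A(3)\oplus\cO_A^{\oplus2}\to U\). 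By the snake lemma, its kernel is \(\cO_A(-1)\), which gives the second sequence.
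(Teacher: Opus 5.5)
Your architecture matches the paper's. You compare \(Q_A\) with \(N^\vee_{A/X}\), whose cokernel is a line bundle on \(L\). You use the summand \(\cO_A(2)\) to get a section of \(N_{A/X}\cong N^\vee_{A/X}(-2)\) whose zero scheme has length \(c_2\). Then you lift a resolution of \(\cI_{w/A}(1)\) using \(H^1(\cO_A(3))=0\); your Step 3 is the paper's argument.

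\textbf{The gap: \(\operatorname{length}(Z)=1\) is never proved.} This is the one numerical fact the lemma asserts. You correctly reduce it to \(d=-\operatorname{length}(Z)\), but you have no independent value of \(d\) or of \(c_2(N_{A/X})\). The isolation argument you propose cannot supply one. From \(0\to\cO_A\to N_{A/X}\to\cI_{Z/A}(-2)\to0\) one gets \(h^0(N_{A/X})=1\) and \(h^1(N_{A/X})=\operatorname{length}(Z)\), so rigidity of \(A\) only gives \(\operatorname{length}(Z)\geq1\).

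What is missing is a global identification of the cokernel, which the paper states as \(\cO_L(-1)\). Since \(E\) is reduced, \(\cI_{E/X}=\cI_{A/X}\cap\cI_{B/X}\) and \(\cI_{A/X}+\cI_{B/X}=\cI_{L/X}\). Hence
\[
\operatorname{coker}\bigl(M_A\to N^\vee_{A/X}\bigr)=\cI_{A/X}/\bigl(\cI_{A/X}^2+\cI_{A/X}\cap\cI_{B/X}\bigr)\cong\cI_{L/B}/\cI_{L/B}^2,
\]
the conormal sheaf of the ruling \(L\subset B\). At \(v\) this is \(\cO_L[b]\oplus\C[c]\), and passing from \(M_A\) to \(Q_A\) kills exactly the torsion \(\C[c]\). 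On \(S_B\cong\bbF_2\) one has \(\cI_{L/B}\cO_{S_B}=\cO_{S_B}(-\Delta_B-\tilde L)\), where \(\tilde L\sim l_B\) is the strict transform of \(L\). So the torsion-free quotient is a line bundle of degree \(-\Delta_B\cdot\tilde L-\tilde L^2=-1\). Thus \(d=-1\), and \(\ch(\cO_L(-1))=h-\tfrac32h^2\) gives \(c_2(N_{A/X})=1\), hence \(\operatorname{length}(Z)=1\).

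\textbf{The local computation at \(v\) is also wrong}, even though its conclusion is true. On \(A=(b=c=0)\) the coefficient \(b\) restricts to zero, so \(b^2-ac\) maps to \(-a[c]\), which vanishes at \(v\). Moreover, \(b^2-ac\) is the free generator of \(M_B\), not of \(M_A\). The image of \(M_A\) at \(v\) is generated by \(e[b],e[c],a[c]\), so every element of \(M_A\) maps into \(\fm_vN^\vee_{A/X}\). Nonvanishing at \(v\) can therefore only come from the reflexive hull.

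The free generator \(be\) of \(M_A\) corresponds to the summand \(\cO_A(-1)\). The summand \(\cO_A(2)\subset Q_A\) is the hull of \(\cI_{v/A}(2)\), which is locally generated by \([ce]\) and \([b^2-ac]\) subject to \(a[ce]+e[b^2-ac]=0\). Up to a unit and a multiple of \([be]\), its local generator is the element \(\sigma\) with \(e\sigma=[ce]\) and \(-a\sigma=[b^2-ac]\), and \(\sigma\) maps to \([c]\). This is the correct reason the section is nonzero at \(v\), that your \(G\) is torsion-free, and that \(w\neq v\).
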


\begin{proof}
    The conormal inclusion extends reflexively to
    \[0 \to Q_A \to N^\vee_{A/X} \to \cO_L(-1) \to 0.\]
    At \(v\), the image of \(M_A\) in the conormal bundle generated by \(b,c\) is generated by \((eb, ec, ac)\). Its reflexive extension is generated by \(eb, c\), with cokernel a line module over \(e = 0\). Globally this cokernel is \(\cO_L(-1)\).

    Adjunction gives \(\det N_{A/X}=\cO_A(-2)\). With \(h=c_1(\cO_A(1))\), we have \(\ch(\cO_L(-1))=h-\frac32h^2\). Combining this with \(Q_A\cong\cO(2)\oplus\cO(-1)\) gives
    \(c_2(N_{A/X}) = 1.\)

    The line subsheaf \(\cO(2)\) defines a section of \(N^\vee_{A/X}(-2)\cong N_{A/X}\). It does not vanish at \(v\), because the defect \(Q_A/M_A\cong\C_v\) lies in the direction of that line subsheaf. Away from \(L\), the conormal inclusion is an isomorphism, so the section is nonzero. Its zero scheme is therefore zero-dimensional, contained in \(L\), and has length \(c_2=1\). It is a reduced point \(w\neq v\).

    For rank two, \(N^\vee_{A/X}(1)\cong N_{A/X}(3)\), so twisting gives the first exact sequence. Resolve \(\cI_{w/A}(-2)\) by \[0\to\cO(-4)\to\cO(-3)^2\to\cI_{w/A}(-2)\to0.\] Since \(H^1(\cO_A(3))=0\), the extension lifts to \[0\to\cO_A(-4)\to\cO_A\oplus\cO_A(-3)^{\oplus2}\to N_{A/X}\to0.\] Twisting finally gives the second exact sequence.
\end{proof}

We identify \(F = \PP_A(N^\vee_{A/X}) = \PP_A(U)\), with \(p:F \to A\). Its tautological divisor class is
\[\xi=c_1(\cO_{\PP_A(U)}(1))=(\pi^*H-F)|_F.\]
The line subsheaf \(\cO_A(3) \to U\) cuts out the surface \(S \cong \operatorname{Bl}_w A \cong \bbF_1\). Let \(\Delta_S\) and \(l_S\) be the negative section and ruling on \(S\) with \(q: S \to \PP^1\). Then
\[p^* h|_S = \Delta_S + l_S, \xi|_S = l_S, F|_S = \Delta_S.\]
The normal sequence gives
\begin{equation}
\label{eq:c5-normal-seq}
    0 \to \cO_S(-3\Delta_S - 2l_S) \to N_{S/\hat{X}} \to \cO_S(\Delta_S) \to 0.
\end{equation}

The curve \(\gamma_0 \subset \hat{B}\) also lies in \(S\). It maps isomorphically to \(L\) and  \(\xi \gamma_0 = 0\). We denote
\[v_0 = \gamma_0 \cap \Delta_B, \quad w_0 = \gamma_0 \cap \Delta_S.\]
These are distinct points over \(v\) and \(w\), respectively.

We summarize the geometry in the following diagram.
\[\begin{tikzcd}
	S & {\hat{X}} & {\hat{B}} \\
	A & X & B
	\arrow[hook, from=1-1, to=1-2]
	\arrow["\rho"', from=1-1, to=2-1]
	\arrow["\pi", from=1-2, to=2-2]
	\arrow[hook', from=1-3, to=1-2]
	\arrow["{\rho_B}", from=1-3, to=2-3]
	\arrow[hook, from=2-1, to=2-2]
	\arrow[hook', from=2-3, to=2-2]
\end{tikzcd}\]
The two surfaces \(S\) and \(\hat{B}\) share \(\gamma_0 \cong L\).

\begin{lemma}
\label{lem:c5-ruling}
    The surfaces \(S\) and \(\hat B\) have independent normal directions along their common ruling \(\gamma_0\). In particular,
    \[N_{\gamma_0/\hat{X}}\cong\cO^{\oplus 2} \oplus \cO(-2)\]
    and
    \[N_{S/\hat{X}}|_{\gamma_0}\cong\cO \oplus \cO(-2).\]
    The image of \(H^0(N_{S/\hat{X}}|_{\gamma_0}) \to H^0(\cO_{\gamma_0}(F))\) is the line of sections vanishing at \(v_0\).
\end{lemma}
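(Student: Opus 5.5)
The plan is to reduce everything to the normal bundle of \(\gamma_0\) in \(\hat X\), built from the two rulings \(\gamma_0\subset S\) and \(\gamma_0\subset\hat B\). The first step is \emph{independence}: along \(\gamma_0\) one should have \(T_S\cap T_{\hat B}=T_{\gamma_0}\). Equivalently, the natural map
\[
N_{\gamma_0/S}\oplus N_{\gamma_0/\hat B}\longrightarrow N_{\gamma_0/\hat X}
\]
should be injective with locally free cokernel. I would check this chart by chart from the explicit blow-up equations. In the \(b\)-chart \eqref{c5:b-chart}, \(\hat B=(e,b-at)\), \(F=(b)\) and \(\gamma_0=\hat B\cap(t=0)\) is parametrized by \(a\). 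The surface \(S\subset F\) is cut out by the line subsheaf \(\cO_A(3)\to U\), so it is described by the fibre coordinate of \(F=\PP_A(U)\) over \(A=(b,c)\). I would write down the tangent planes of \(S\) and \(\hat B\) along \(\gamma_0\) and check that their intersection is one-dimensional at every point. Near \(v_0\) I would use the \(c\)-chart in addition. Near \(w_0\) I would use the local description of \(U\) through \(\cI_{w/A}(1)\), since \(S\) is the blow-up of \(A\) at \(w\). I expect this local verification at the special points \(v_0\) and \(w_0\) to be the main obstacle. Away from them both surfaces are smooth graphs over \(L\) and independence is immediate.

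Next I would compute the degrees. Since \(\xi|_S=l_S\) and \(\xi\cdot\gamma_0=0\), the curve \(\gamma_0\) is a ruling of \(S\cong\bbF_1\); it is also a ruling of \(\hat B\cong\bbF_2\). Hence \(N_{\gamma_0/S}\cong N_{\gamma_0/\hat B}\cong\cO\). For the determinant, use \(K_{\hat X}=\pi^*K_X+F\) and \(F|_{\hat B}=\Delta_B+\gamma_0\) from \eqref{eq:c5-blow-up}. These give \(F\cdot\gamma_0=1\), hence \(K_{\hat X}\cdot\gamma_0=-1+1=0\) and \(\deg N_{\gamma_0/\hat X}=-2\). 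Independence then yields
\[
0\to\cO^{\oplus2}\to N_{\gamma_0/\hat X}\to\cO(-2)\to0,
\]
which splits because \(H^1(\PP^1,\cO(2))=0\). Dividing by \(N_{\gamma_0/S}\cong\cO\), which is a direct summand, gives \(N_{S/\hat X}|_{\gamma_0}\cong\cO\oplus\cO(-2)\). As a consistency check, I would compare this with \eqref{eq:c5-normal-seq} restricted to \(\gamma_0\): there \(\Delta_S\cdot\gamma_0=1\), so that sequence restricts to
\[
0\to\cO(-3)\to N_{S/\hat X}|_{\gamma_0}\to\cO_{\gamma_0}(F)\cong\cO(1)\to0,
\]
whose total degree is again \(-2\).

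Finally, \(H^0(N_{S/\hat X}|_{\gamma_0})\) is one-dimensional and spanned by the image of \(N_{\gamma_0/\hat B}\cong\cO\), the direction of \(\hat B\). Its image in \(H^0(\cO_{\gamma_0}(F))\), which comes from the quotient in \eqref{eq:c5-normal-seq} because \(F|_S=\Delta_S\), is the normal derivative of the equation of \(F\) along \(\hat B\) in the direction transverse to \(\gamma_0\). In the \(b\)-chart, \(F|_{\hat B}\) is given by \(b=at\), and differentiating in \(t\) gives \(a\). This vanishes exactly on \(\Delta_B=(a=0)\), that is, at \(v_0\), and is nonzero elsewhere on \(\gamma_0\). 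So the image is the line of sections of \(\cO_{\gamma_0}(F)\cong\cO(1)\) vanishing at \(v_0\); it is nonzero because \(a\not\equiv0\).
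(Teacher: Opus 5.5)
There is a genuine gap in the independence step. Your outline otherwise matches the paper's proof: the ruling normals \(N_{\gamma_0/S}\cong N_{\gamma_0/\hat B}\cong\cO\) give a trivial rank-two subbundle, \(K_{\hat X}\cdot\gamma_0=(\pi^*K_X+F)\cdot\gamma_0=0\) makes the quotient \(\cO(-2)\), and the extension splits. You then divide by \(N_{\gamma_0/S}\) and read off the image in \(H^0(\cO_{\gamma_0}(F))\) from \(b=at\) on \(\hat B\). Those steps are fine. The problem is that independence of the two normal directions is the real content of the lemma, and you only announce a chart-by-chart check of tangent planes. Your justification of the generic case, that both surfaces are ``smooth graphs over \(L\)'', is not an argument. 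You also treat \(w_0\) as a second hard point, which it is not.

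Away from \(v_0\), independence follows from facts you already have. Since \(S\subset F\), the \(S\)-direction maps to zero in \(N_{F/\hat X}|_{\gamma_0}=\cO_{\gamma_0}(F)\). By your own last paragraph, the \(\hat B\)-direction maps to the section \(a\), which vanishes only at \(v_0\). This covers \(w_0\) as well: there \(S\) contains the vertical fibre direction, but \(\hat B\) is transverse to \(F\).

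The one point that needs an idea is \(v_0\). There the transverse tangent of \(\hat B\) is \(\partial_t+a\partial_b=\partial_t\), which lies in \(\ker d\pi\). So independence at \(v_0\) is equivalent to \(T_{v_0}S\) containing no \(\pi\)-vertical vector. The equation of \(S\) near \(v_0\) is not explicit, so this cannot come from writing coordinates. It comes from \(w\neq v\): the map \(S\cong\operatorname{Bl}_wA\to A\) is an isomorphism near \(v\), so a direction in \(S\) transverse to \(\gamma_0\) has nonzero \(\partial_e\)-component. This is exactly the paper's argument, and the hypothesis is essential. If \(w=v\), then \(S\) would contain the fibre of \(F\) over \(v\), and \(S\) and \(\hat B\) would both have tangent plane spanned by \(\partial_a,\partial_t\) at \(v_0\). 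The \(c\)-chart plays no role, since \(\gamma_0=\{b=t=e=0\}\) lies in the \(b\)-chart near \(v\).
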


\begin{proof}
    Each ruling normal \(N_{\gamma_0/S}\) and \(N_{\gamma_0/\hat B}\) is \(\cO\). The first direction maps to zero in \(\cO_{\gamma_0}(F)\), since \(S\subset F\), while the second has a simple zero at \(v_0\). Thus they are independent away from \(v_0\). At \(v_0\), use the coordinates \((a,b,t,e)\) in \eqref{c5:b-chart}. The curve is \((b,t,e)=(0,0,0)\), and the direction in \(\hat B=(e,b-at)\) transverse to it is \(\partial_t+a\partial_b\), which becomes \(\partial_t\) at \(a=0\) and is killed by \(d\pi\). On the other hand, the morphism \(S\cong\operatorname{Bl}_w A\to A\) is an isomorphism near \(v\), since \(w\neq v\). A direction in \(S\) transverse to \(\gamma_0\) therefore has nonzero \(\partial_e\)-component, as \(L=(e=0)\) on \(A\). These directions remain independent modulo \(T_{v_0}\gamma_0\).

    These two directions form a subbundle \(\cO^{\oplus 2} \subset N_{\gamma_0/\hat{X}}\). Now
    \[K_{\hat{X}} \gamma_0 = (\pi^* K_X + F) \gamma_0 = 0.\]
    Adjunction gives normal degree \(-2\), so the quotient is \(\cO(-2)\). The extension splits because \(H^1(\cO(2))=0\). The final assertion follows by quotienting by \(N_{\gamma_0/S}\).
\end{proof}

\begin{lemma}
\label{lem:c5-connect}
    Applying \(Rq_*\) to \eqref{eq:c5-normal-seq} and taking cohomology sheaves gives a connecting map
    \[\delta: \cO \oplus \cO(-1) \to \cO(-1) \oplus \cO, \delta = \begin{pmatrix}
        0 & a \\
        a & b \\
    \end{pmatrix}\]
    where \(a \in \C\) and \(b \in H^0(\PP^1, \cO(1))\).
\end{lemma}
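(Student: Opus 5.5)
The plan is to compute both pushforwards explicitly on \(S\cong\bbF_1\) and then read off the shape of \(\delta\) from an extension class together with relative duality. Write \(\bbF_1=\PP_{\PP^1}(\cO\oplus\cO(1))\), with the relatively tautological class \(\Delta_S+l_S\) and fiber degrees \(\Delta_S\cdot l_S=1\). Then
\[
q_*\cO_S(\Delta_S)\cong\cO\oplus\cO(-1),\qquad R^1q_*\cO_S(\Delta_S)=0,
\]
\[
q_*\cO_S(\Delta_S+l_S)\cong\cO(1)\oplus\cO.
\]
The subsheaf \(\cO_S(-3\Delta_S-2l_S)\) has relative degree \(-3\), so its \(q_*\) vanishes. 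For its \(R^1q_*\), use \(K_S=-2\Delta_S-3l_S\) and \(q^*K_{\PP^1}=-2l_S\), which give \(\omega_q\cong\cO_S(-2\Delta_S-l_S)\). Relative duality then gives
\[
R^1q_*\cO_S(-3\Delta_S-2l_S)\cong\bigl(q_*\cO_S(\Delta_S+l_S)\bigr)^\vee\cong\cO(-1)\oplus\cO.
\]
Applying \(Rq_*\) to \eqref{eq:c5-normal-seq} therefore produces a connecting map \(\delta:\cO\oplus\cO(-1)\to\cO(-1)\oplus\cO\).

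Next I constrain the entries by degree. The component \(\cO\to\cO(-1)\) is zero. The components \(\cO\to\cO\) and \(\cO(-1)\to\cO(-1)\) are constants, and the component \(\cO(-1)\to\cO\) is a section \(b\in H^0(\cO(1))\). The real content of the lemma is that the two constant entries coincide.

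To prove this, I will describe \(\delta\) as cup product with the extension class
\[
e\in\Ext^1\bigl(\cO_S(\Delta_S),\cO_S(-3\Delta_S-2l_S)\bigr)=H^1\bigl(\cO_S(-4\Delta_S-2l_S)\bigr).
\]
Under relative duality, \(\delta\) corresponds to the bilinear pairing
\[
q_*\cO_S(\Delta_S)\otimes q_*\cO_S(\Delta_S+l_S)\to q_*\cO_S(2\Delta_S+l_S)\xrightarrow{\ \langle e,-\rangle\ }\cO .
\]
Here the first arrow is multiplication of relative sections. The second arrow is pairing with the image of \(e\) in
\[
R^1q_*\cO_S(-4\Delta_S-2l_S)\cong\bigl(q_*\cO_S(2\Delta_S+l_S)\bigr)^\vee,
\]
which is dual because \(\omega_q(4\Delta_S+2l_S)=\cO_S(2\Delta_S+l_S)\). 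Since \(q_*\cO_S(\Delta_S+l_S)=q_*\cO_S(\Delta_S)\otimes\cO(1)\) and multiplication is commutative, the pairing is a symmetric form \(\Sym^2(q_*\cO_S(\Delta_S))\to\cO(-1)\). In fiber coordinates \(x_0,x_1\) of weights \(\cO,\cO(-1)\), the matrix of \(\delta\) is the Gram matrix of this form, so it is symmetric. The \((x_0,x_0)\) entry lies in \(H^0(\cO(-1))=0\), the off-diagonal entries are the same constant \(a\), and the \((x_1,x_1)\) entry is a linear form \(b\). This is exactly the displayed matrix.

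The main obstacle I expect is the bookkeeping in the previous step: checking that the connecting homomorphism of \(Rq_*\) really agrees, up to sign, with the duality-transported cup product, and that the identification of \(q_*\cO_S(\Delta_S+l_S)\) with a twist of \(q_*\cO_S(\Delta_S)\) is compatible with the chosen splittings. I would settle this by a direct Čech computation on the two standard charts of \(\bbF_1\) over \(\PP^1\), writing \(e\) as a Laurent cocycle in the fiber coordinate and reading off the four entries.
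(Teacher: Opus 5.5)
Your proposal is correct and matches the paper's proof. Both arguments compute \(q_*\cO_S(\Delta_S)\) and \(R^1q_*\cO_S(-3\Delta_S-2l_S)\) using \(\omega_q\cong\cO_S(-2\Delta_S-l_S)\) and relative duality, place the extension class in \(H^1(S,\cO_S(-4\Delta_S-2l_S))\), and derive the symmetry of \(\delta\) from the factorization of the duality pairing through \(q_*\cO_S(2\Delta_S)\otimes\cO(1)\cong\Sym^2(q_*\cO_S(\Delta_S))\otimes\cO(1)\); degree considerations then give the displayed matrix. Your version spells out the associativity \((e\cup s)\cup t=e\cup(st)\) and the degree bookkeeping for each entry, whereas the paper simply asserts the symmetric-form identification.
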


\begin{proof}
    Since \(\omega_q\cong\cO_S(-2\Delta_S-l_S)\), the projective bundle formula and relative duality give
    \[\mathcal E\coloneq q_*\cO_S(\Delta_S)\cong\cO \oplus \cO(-1)\]
    \[R^1q_*\cO_S(-3\Delta_S-2l_S)\cong (\mathcal E)^\vee \otimes \cO(-1)\]
    and the other images vanish. The extension class belongs to
    \[H^1(S,\cO_S(-4\Delta_S-2l_S))\cong H^0(\PP^1, \cO(-1) \oplus \cO \oplus \cO(1)).\]
    Relative Serre duality identifies its cup-product map with a symmetric form \(\mathcal E\to\mathcal E^\vee\otimes\cO(-1)\). Symmetry follows from \(\Sym^2\mathcal E\cong q_*\cO_S(2\Delta_S)\), which gives the displayed matrix form.
\end{proof}

\begin{proposition}
    The generic splitting type on the plane is nonnegative.
\end{proposition}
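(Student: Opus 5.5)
We argue by contradiction: assume the generic splitting type on \(A\) is \((-1,2)\), so that all the preceding constructions apply (the point \(w\), the surface \(S\cong\operatorname{Bl}_wA\subset F\), the sequence \eqref{eq:c5-normal-seq} and the connecting map \(\delta\) of \Cref{lem:c5-connect}). The aim is the same as in the quadric-slice case and in \Cref{lem:c2-pos}. We want to produce a nonconstant deformation of a smooth surface lying over \(P\) in \(\hat X\), and then use isolation of the surface fiber to reach a contradiction. The natural candidate is \(S\) itself, so everything reduces to computing \(H^0\) and \(H^1\) of \(N_{S/\hat X}\) through \(Rq_*\), using the relative computation in \Cref{lem:c5-connect}.

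First I would pin down the constant \(a\) in \(\delta\). Since \(\xi|_S=l_S\) and \(\xi\cdot\gamma_0=0\), the curve \(\gamma_0\) is a ruling of \(S\); put \(x_0=q(\gamma_0)\). Restricting \eqref{eq:c5-normal-seq} to a ruling gives \(0\to\cO(-3)\to N|_{l}\to\cO(1)\to0\). The fiber of \(\delta\) at a point \(x\) is the coboundary \(H^0(\cO(1))\to H^1(\cO(-3))\), and its kernel has dimension \(h^0(N_{S/\hat X}|_{l})\). By \Cref{lem:c5-ruling}, \(N_{S/\hat X}|_{\gamma_0}\cong\cO\oplus\cO(-2)\), so this kernel is one-dimensional at \(x_0\). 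Hence \(\det\delta=-a^2\) vanishes at \(x_0\). Because \(a\) is a constant, this forces \(a=0\), and then \(\delta=\operatorname{diag}(0,b)\) in the sense of \Cref{lem:c5-connect}.

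With \(a=0\), we have \(q_*N_{S/\hat X}=\ker\delta\supseteq\cO\) and \(R^1q_*N_{S/\hat X}=\operatorname{coker}\delta\). The Leray spectral sequence then computes \(h^0(N_{S/\hat X})\geq1\) and \(h^1(N_{S/\hat X})=h^1(\ker\delta)+h^0(\operatorname{coker}\delta)\). If \(b\neq0\) this gives \((h^0,h^1)=(1,1)\); if \(b=0\) it gives \((2,2)\). So the Euler characteristic is zero, and the tangent–obstruction estimate alone does not suffice. The distinguished section \(s\) coming from the summand \(\cO\subset\ker\delta\) has an important property. On \(\gamma_0\), the last assertion of \Cref{lem:c5-ruling} shows that its image in \(H^0(\cO_{\gamma_0}(F))\) is the nonzero section vanishing exactly at \(v_0\). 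Thus \(s\) is a first-order deformation of \(S\) that pushes it out of \(F\), in the direction of \(\hat B\) along \(\gamma_0\).

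The main obstacle is to show that \(s\) is unobstructed, i.e.\ that it integrates to an actual one-parameter family. First I would compute the primary obstruction \(s\cup s\in H^1(N_{S/\hat X})\) by working relatively over \(\PP^1\). The section \(s\) lies in the trivial summand and \(\delta\) is diagonal with first entry zero, so I expect the obstruction to land in the part of \(R^1q_*\) paired with that summand. There it should vanish by the symmetry of the form in \Cref{lem:c5-connect}. If that is inconclusive, I would try to realize the family geometrically. One option is to move the point \(w\) along \(L\) and use the resulting surfaces \(\operatorname{Bl}_{w'}A\). Another, as a fallback, is to transfer the computation to \(E\) itself: use \(N_{A/X}\supset\cO_A\) together with the description of \(N_B\) to estimate \(h^0(E,N)-h^1(E,N)>0\), which contradicts \Cref{lem:deform}. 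Once a nonconstant family exists, its members are proper irreducible surfaces not contained in \(F\). They map to points of the affine base and must lie over \(P\), whose fiber has support \(F\cup\hat B\). Hence every member equals \(S\), which is the desired contradiction.
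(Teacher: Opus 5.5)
There is a genuine gap. Your first step is exactly the paper's: compare the fiber of \(\delta\) at \(q(\gamma_0)\) with \(N_{S/\hat X}|_{\gamma_0}\cong\cO\oplus\cO(-2)\) and conclude \(a=0\). After that, you rest the whole contradiction on integrating the first-order deformation \(s\in H^0(N_{S/\hat X})\) to an actual nonconstant family, and this is never established. You observe yourself that the Euler characteristic vanishes. In fact \((h^0,h^1)=(1,1)\) in both cases \(b\neq0\) and \(b=0\), since \(h^0(\cO\oplus\cO(-1))=h^0(\cO(-1)\oplus\cO)=1\), so your value \((2,2)\) is a slip. The tangent--obstruction bound therefore gives nothing. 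The vanishing of \(s\cup s\) ``by symmetry of the form'' is only expected, not argued. The fallbacks are not carried out either. Moving \(w\) along \(L\) does not obviously give subvarieties of \(\hat X\): \(w\) is the zero of the unique (up to scalar) map \(\cO_A(3)\to U\), since \(h^0(U(-3))=1\). As written, the proposal does not prove the statement.

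The missing observation is that you already hold a contradiction and need no deformation theory. Since \(H^0(S,\cO_S(\Delta_S))=H^0(\PP^1,\mathcal E)\) is one-dimensional, the generator of the summand \(\cO\subset\mathcal E\) is the canonical section of \(\cO_S(\Delta_S)\). The image of \(s\) in \(\cO_S(\Delta_S)=\cO_S(F)\) is therefore that canonical section. Its restriction to the ruling \(\gamma_0\) is a nonzero section of \(\cO_{\gamma_0}(F)\cong\cO_{\PP^1}(1)\), vanishing exactly at \(w_0=\gamma_0\cap\Delta_S\). On the other hand, the last assertion of \Cref{lem:c5-ruling} says every nonzero image of \(H^0(N_{S/\hat X}|_{\gamma_0})\) in \(H^0(\cO_{\gamma_0}(F))\) vanishes at \(v_0\). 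Hence \(w_0=v_0\). Since \(\gamma_0\to L\) is an isomorphism, this contradicts \(w\neq v\). This is the paper's argument. Your sentence claiming that the image of \(s\) ``vanishes exactly at \(v_0\)'' is precisely where you should have compared it with the known zero \(w_0\) of \(s_{\Delta_S}|_{\gamma_0}\).
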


\begin{proof}
    Let \(t_0 \in \PP^1\) correspond to \(\gamma_0\). Restricting \eqref{eq:c5-normal-seq} gives
    \[0 \to \cO_{\gamma_0}(-3) \to \cO_{\gamma_0} \oplus \cO_{\gamma_0}(-2) \to \cO_{\gamma_0}(1) \to 0.\]
    The fiber connecting map \(H^0(\cO(1))\to H^1(\cO(-3))\) has a one-dimensional kernel. Its determinant in \Cref{lem:c5-connect} is \(-a^2\), so \(a=0\).

    Therefore the first summand \(\cO\subset\mathcal E\) lies in the kernel globally. The canonical section of \(\cO_S(\Delta_S)\) lifts to \(N_{S/\hat X}\). On \(\gamma_0\), its image in \(H^0(\cO_{\gamma_0}(F))\) is nonzero and vanishes at \(w_0\). By \Cref{lem:c5-ruling}, every nonzero image vanishes at \(v_0\). Hence \(w_0=v_0\), contradicting \(w\neq v\).
\end{proof}

Therefore only the splitting type \((0,1)\) occurs on a general slice of either component. Each component bundle is globally generated, and sections can be glued by matching their values at the node. Thus \(M|_C\) is globally generated and nef.

The slice contracts this reduced two-component curve crepantly in a smooth threefold. Its blow-up has an ordinary double point over the node and is therefore klt. \Cref{lem:aw-conormal-lifting} and \cite[Proposition 5.6.3]{AW1998} give the surjection \(\fm_{z_P}\to H^0(C,M|_C)\). This verifies \Cref{ass:A6}.

We now verify \Cref{ass:A7}. On \(B\), restriction to a moving general hyperplane has splitting type \((-2,-1)\), so any section of \(M(-H)\) restricts to zero on \(B\). Its restriction to \(A\) therefore vanishes on \(L\). Mapping it into \(Q_A\) gives a section of \(Q_A(-2)\), whose splitting type on a general line is also \((-2,-1)\). Hence this section is zero. The kernel of \(M\to M_A\oplus M_B\) is supported at \(v\), and \Cref{lem:c5-nzd} excludes such a finite-length subsheaf. Thus \(H^0(E,M(-H))=0\), and \Cref{lem:fb} gives \(f^*b\in\cI^2\).

We next verify \Cref{ass:A8} for \(\tilde T=\operatorname{Bl}_{\hat B}(\operatorname{Bl}_A X)\). Here \(A\) is smooth and the globally defined strict transform \(\hat B\cong\bbF_2\) is smooth. By \eqref{eq:c5-blow-up}, the second blow-up makes the pullback of \(\cI\) invertible. Thus the universal property of \(T=\operatorname{Bl}_E X\) gives a global projective morphism \(\mu:\tilde T\to T\), whose local descriptions agree on overlaps. The local incidence charts show that \(\tilde T\) is smooth and that \(\mu\) has no exceptional divisor. If \(F_A,F_B\) are the exceptional divisors over \(X\), then
\[\mu^* D = F_A + F_B,\quad K_{\tilde{T}} = \mu^* K_T.\]

The singular section of \(T\) has tautological degree two, so a degree-\(-1\) quotient curve is not that section and meets the singular locus only finitely. Its lift extends by properness and is unique by separatedness. It is embedded because its composite into \(T\) is embedded. This completes the verification.

\begin{proposition}
\label{prop:c5-ruling}
    Every ruling restriction of \(V_A\), \(V_B\) is trivial. Consequently,
    \[V_i\cong q_i^*(\cO \oplus \cO(1)),\quad
    M_A\cong\cO_A \oplus \cI_{v/A}(1),\quad
    M_B\cong\cO_B \oplus \cO_B(1),\quad
    Q_A\cong\cO_A \oplus \cO_A(1).\]
\end{proposition}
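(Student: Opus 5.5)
The plan follows the C3 positivity argument, with \Cref{prop:non-neg} as the main input. All hypotheses of that proposition are now in place in the C5 setting: \Cref{ass:A2,ass:A3} and \Cref{ass:A5}--\Cref{ass:A8} were verified above. So no $H$-degree-one line $l\subset E$ carries a surjection $M|_l\twoheadrightarrow\cO_l(-1)$.

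First, rule out a nontrivial ruling splitting. The rulings of $S_A=\bbF_1$ map to lines of $A$ through $v$, and the rulings of $S_B=\bbF_2$ map to rulings of the quadric cone $B$. Both are $H$-degree-one lines of $E$. Suppose $V_i|_{l_i}\cong\cO(-1)\oplus\cO(1)$. By \eqref{eq:c5-vm}, $(M|_{\rho_i(l_i)})^{\mathrm{tf}}\cong V_i|_{l_i}$, so $M|_{\rho_i(l_i)}$ surjects onto $\cO(-1)$, contradicting \Cref{prop:non-neg}. By the preceding splitting lemma, every ruling restriction is therefore $\cO^{\oplus2}$. Then $q_i^*q_{i*}V_i\to V_i$ is an isomorphism on every fiber, hence an isomorphism. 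Restricting to $\Delta_i$, which is a section of $q_i$, gives $q_{i*}V_i\cong V_i|_{\Delta_i}\cong\cO\oplus\cO(1)$. This proves $V_i\cong q_i^*(\cO\oplus\cO(1))$.

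Next, push forward to recover $M_i$ and $Q_A$. On $A$, $q_A^*\cO(1)=\rho_A^*\cO_A(1)\otimes\cO(-\Delta_A)$. Hence $\rho_{A*}V_A\cong\cO_A\oplus\cO_A(1)$, which is reflexive, and this identifies $Q_A=M_A^{\vee\vee}\cong\cO_A\oplus\cO_A(1)$. We know $M_A\subset Q_A$ with $Q_A/M_A\cong\C_v$, and locally $M_A\cong\cO_A\oplus\cI_{v/A}$, with free generator $be$. The defect lies in the rank-one summand not generated by $be$. To match summands globally, observe that $be$ is a global section of $M_A$ generating the $\cO_A$ factor, since it comes from the global free summand. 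The quotient $M_A/\cO_A$ is then a rank-one torsion-free sheaf with reflexive hull $\cO_A(1)$ and colength one at $v$, namely $\cI_{v/A}(1)$. The extension splits because $\Ext^1(\cI_{v/A}(1),\cO_A)\cong H^1(\cI_{v/A}(-2))^\vee$-type groups vanish; I would check this with the sequence $0\to\cI_v\to\cO\to\C_v\to0$. On $B$, $q_B^*\cO(1)$ pulls back from the weight-one class, and $\rho_{B*}$ gives $\cO_B\oplus\cO_B(1)$, where $\cO_B(1)$ is the reflexive weight-one sheaf on $S_2$. This agrees with the local form $M_B\cong\cO_B\oplus M_1$, and since $M_B$ is already reflexive we get $M_B\cong\cO_B\oplus\cO_B(1)$.

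The main obstacle is ensuring that the pushforward identifications match the actual $M_i$ rather than only their reflexive hulls, in particular that the colength-one defect on $A$ sits in the $\cO_A(1)$ factor. For this I would use the local computation $Q_A/M_A\cong\C_v$ together with the fact that the global section $be$ spans the trivial factor without vanishing at $v$. If needed, an Ext-vanishing check settles the splitting.
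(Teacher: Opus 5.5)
Your first half is correct and matches the paper. \Cref{prop:non-neg} with \eqref{eq:c5-vm} excludes the ruling splitting \((-1,1)\), so \(V_i\) descends along \(q_i\), and restriction to the section \(\Delta_i\) gives \(V_i\cong q_i^*(\cO\oplus\cO(1))\). Your treatment of \(B\) is also fine: \(\rho_{B*}\cO_{S_B}(l_B)\cong\cO_B(1)\) is reflexive and agrees with the reflexive sheaf \(M_B\) away from \(v\).

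The gap is on \(A\).

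First, the pushforward is miscomputed. Since \(\rho_{A*}\cO_{S_A}(-\Delta_A)=\cI_{v/A}\), one has \(\rho_{A*}\cO_{S_A}(l_A)\cong\cI_{v/A}(1)\). Hence \(\rho_{A*}V_A\cong\cO_A\oplus\cI_{v/A}(1)\), which is not reflexive. Your formula for \(Q_A\) survives after taking double duals, but the workaround you then use for \(M_A\) does not.

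Second, that workaround has two problems. The element \(be\) is only a local generator at \(v\), so calling it a global section that spans a global \(\cO_A\) factor assumes the conclusion. More seriously, the Ext group you want to vanish does not vanish. The sequence \(0\to\cI_{v/A}(-2)\to\cO_A(-2)\to\C_v\to0\) gives \(\Ext^1(\cI_{v/A}(1),\cO_A)\cong H^1(\cI_{v/A}(-2))^\vee\cong\C\), and the nonsplit extension is \(T_{\PP^2}(-1)\).

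In fact, knowing \(Q_A\cong\cO_A\oplus\cO_A(1)\) with a colength-one defect at \(v\) is compatible with both \(M_A\cong\cO_A\oplus\cI_{v/A}(1)\) and \(M_A\cong\cI_{v/A}\oplus\cO_A(1)\). So the data you use cannot decide where the defect sits.

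The missing input is the local identity \(\rho_{A*}V_A\cong M_A\), which is what the paper uses. It holds because \((\rho_A^*\cI_{v/A})^{\mathrm{tf}}=\cO_{S_A}(-\Delta_A)\) pushes forward to \(\cI_{v/A}\). Combined with the correct pushforward, it gives \(M_A\cong\cO_A\oplus\cI_{v/A}(1)\) at once, and then \(Q_A=M_A^{\vee\vee}\cong\cO_A\oplus\cO_A(1)\). Equivalently, the alternative \(M_A\cong\cI_{v/A}\oplus\cO_A(1)\) would give \(V_A\cong\cO_{S_A}(-\Delta_A)\oplus\cO_{S_A}(\Delta_A+l_A)\). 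Its ruling splitting is \((-1,1)\), which you have already excluded.
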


\begin{proof}
    \Cref{prop:non-neg} excludes every ruling splitting type \((-1,1)\). The bundles therefore descend along their ruling maps, and restriction to \(\Delta_i\) shows that each descended bundle is \(\cO\oplus\cO(1)\).

    Finally, \(\rho_{i*}V_i\cong M_i\). This follows from the local description. In particular, we have
    \[\rho_{A*}\cO_{S_A}(l_A)\cong\cI_{v/A}(1),\quad
    \rho_{B*}\cO_{S_B}(l_B)\cong\cO_B(1).\]
    These identities give all the asserted formulas.
\end{proof}

\begin{lemma}
\label{lem:c5-van}
    Let \(M_L^{\mathrm{tf}}=(M|_L)^{\mathrm{tf}}\). There is an exact sequence
    \[0 \to N \to Q_A^{\vee} \oplus M_B^{\vee} \to (M_L^{\mathrm{tf}})^\vee \to 0.\]
    The restrictions of \(V_A\) and \(V_B\) to the strict transforms of \(L\) identify with \(M_L^{\mathrm{tf}}\), and \(R\Gamma(E,N)=0\).
\end{lemma}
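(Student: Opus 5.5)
The plan is to dualize the natural gluing sequence for the conormal sheaf on the reducible surface \(E=A\cup_L B\) and then compute cohomology component by component, using \Cref{prop:c5-ruling}.

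\textbf{The gluing sequence.} Start from the Mayer--Vietoris type sequence
\[
0\to M\to M_A\oplus M_B\to M|_L\to 0.
\]
This sequence is exact because \(\cO_E\to\cO_A\oplus\cO_B\to\cO_L\to0\) is exact with injective first map, since \(\cI_A\cap\cI_B=\cI\). One has to check exactness after tensoring with \(M\). Away from \(v\) the embedding is regular and \(M\) is locally free, so this is clear. At \(v\) the kernel of \(M\to M_A\oplus M_B\) would be a finite-length subsheaf supported at \(v\), and \Cref{lem:c5-nzd} excludes it.

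Next, replace \(M_A\) by \(Q_A\) and \(M|_L\) by \(M_L^{\mathrm{tf}}\). The defect \(Q_A/M_A\cong\C_v\) should match the torsion of \(M|_L\) at \(v\); this can be checked in the local coordinates \((be,ce,b^2-ac)\). Applying \(\cHom_E(-,\cO_E)\) then gives
\[
0\to N\to Q_A^\vee\oplus M_B^\vee\to(M_L^{\mathrm{tf}})^\vee\to\cExt^1_E(\ldots).
\]
Surjectivity on the right is the point I regard as the \textbf{main obstacle}. I would check it locally at \(v\), where everything is explicit. Away from \(v\), all terms are vector bundles on \(A\), \(B\), \(L\), and the dual of the gluing sequence of locally free sheaves along a Cartier curve on each component is exact.

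\textbf{Identification along \(L\).} The strict transform of \(L\) in \(S_A\) is a line not meeting \(\Delta_A\). In \(S_B\) it is a ruling \(l_B\). Restricting the surjections \(\rho_i^*M_i\twoheadrightarrow V_i\) and arguing as in \eqref{eq:c5-vm} gives \(V_i|_{L}\cong M_L^{\mathrm{tf}}\), a rank-two bundle on \(L\cong\PP^1\). By \Cref{prop:c5-ruling}, \(V_B|_{l_B}\cong\cO^{\oplus2}\). From the side of \(A\), \(V_A=q_A^*(\cO\oplus\cO(1))\) restricted to a line section of degree one gives \(\cO\oplus\cO(1)\). This would be inconsistent, which suggests that \(M_L^{\mathrm{tf}}\) should be computed via \(Q_A|_L\cong\cO\oplus\cO(1)\) modulo the torsion at \(v\). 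Hence I expect \((M_L^{\mathrm{tf}})\cong\cO_L\oplus\cO_L\) or \(\cO\oplus\cO(1)\), depending on this local bookkeeping. This is exactly what the local chart at \(v\) must settle.

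\textbf{Cohomology.} The remaining step is to compute \(R\Gamma\) of the three terms and show that
\[
R\Gamma(Q_A^\vee\oplus M_B^\vee)\to R\Gamma\bigl((M_L^{\mathrm{tf}})^\vee\bigr)
\]
is a quasi-isomorphism. On the \(A\) side,
\[
Q_A^\vee=\cO_A\oplus\cO_A(-1),\qquad R\Gamma=\C.
\]
On the \(B\) side, \(M_B^\vee=\cO_B\oplus\cO_B(-1)\). Here \(R\Gamma(\cO_B)=\C\), and \(\cO_B(-1)\), the weight-one reflexive sheaf on \(\PP(1,1,2)\), is acyclic. So this side also contributes \(\C\). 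With \((M_L^{\mathrm{tf}})^\vee\cong\cO_L\oplus\cO_L(-1)\), one gets \(R\Gamma=\C\).

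The map \(\C^2\to\C\) is the difference of restrictions of the constant sections, hence surjective with one-dimensional kernel. This would leave \(H^0(N)=\C\), so the computation needs adjusting: the kernel is killed by the requirement that the two constant summands glue compatibly. Indeed, the trivial summands \(\cO_A\) generated by \(be\) and \(\cO_B\) generated by \(b^2-ac\) restrict on \(L\) to different summands of \(M_L^{\mathrm{tf}}\). So the map on \(H^0\) is an isomorphism \(\C^2\to H^0((M_L^{\mathrm{tf}})^\vee)\), which forces \((M_L^{\mathrm{tf}})^\vee\cong\cO_L^{\oplus2}\), i.e. \(M_L^{\mathrm{tf}}\) trivial. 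With that, the long exact sequence gives \(R\Gamma(E,N)=0\).

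Verifying which summands match along \(L\), using the local generators \(be\) and \(b^2-ac\), is the key computation.
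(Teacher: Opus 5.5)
Your overall shape (a gluing sequence on \(E=A\cup_L B\), then acyclicity of the negative summands and an isomorphism on \(H^0\)) is the right one. However, two steps fail as written. First, the four-term sequence cannot come from applying \(\cHom_E(-,\cO_E)\) to \(0\to M\to Q_A\oplus M_B\to M_L^{\mathrm{tf}}\to0\). That functor is contravariant, so it puts \(N\) at the right end. Its terms are also not the ones in the statement. Since \(\cHom_E(\cO_A,\cO_E)\cong\cI_{B/E}\cong\cO_A(-1)\), one gets \(\cHom_E(Q_A,\cO_E)\cong Q_A^\vee\otimes\cO_A(-1)\). Moreover, \(\cHom_E(M_L^{\mathrm{tf}},\cO_E)=0\) because \(\cO_E\) is Cohen--Macaulay of dimension two. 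The paper instead applies the covariant functor \(\cHom_E(M,-)\) to the normalization sequence \(0\to\cO_E\to\cO_A\oplus\cO_B\to\cO_L\to0\). Left exactness gives \(0\to N\to\cHom_A(M_A,\cO_A)\oplus\cHom_B(M_B,\cO_B)\to\cHom_L(M|_L,\cO_L)\). These terms are exactly \(Q_A^\vee\), \(M_B^\vee\), \((M_L^{\mathrm{tf}})^\vee\), since a dual only sees the reflexive hull, respectively the torsion-free quotient. Only surjectivity at \(v\) then remains, and it comes from the independent free generators \(be\) on \(A\) and \(b^2-ac\) on \(B\). Your detour through \(M\to M_A\oplus M_B\to M|_L\), and the matching of \(Q_A/M_A\) with the torsion of \(M|_L\), is not needed.

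Second, and more seriously, your identification along \(L\) rests on a geometric error. The line \(L\) passes through \(v\), and \(S_A=\operatorname{Bl}_vA\). So the strict transform of \(L\) in \(S_A\cong\bbF_1\) meets \(\Delta_A\) and is a ruling \(l_A\), not a section of degree one. Hence \(V_A|_{l_A}\cong\cO^{\oplus2}\) by \Cref{prop:c5-ruling}, consistent with \(V_B|_{l_B}\cong\cO^{\oplus2}\), and \eqref{eq:c5-vm} gives \(M_L^{\mathrm{tf}}\cong\cO_L^{\oplus2}\) directly. There is no inconsistency to resolve and no ``local bookkeeping'' needed. Because you missed this, your cohomology step runs backwards. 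After finding \(H^0(N)=\C\) with the wrong splitting, you assert that the \(H^0\)-map is an isomorphism and use this to force \(M_L^{\mathrm{tf}}\) to be trivial. That assumes what is to be proved. At best, independence at \(v\) of the two restricted sections would give \(h^0\bigl((M_L^{\mathrm{tf}})^\vee\bigr)\geq2\), but you neither verify that independence nor start from a correct list of candidates. With triviality established from the rulings, the argument closes as in the paper. The middle term \(Q_A^\vee\oplus M_B^\vee\) has \(R\Gamma\cong\C^2\), since \(\cO_A(-1)\) and \(\cO_B(-1)\) are acyclic. The right term \(\cO_L^{\oplus2}\) has \(R\Gamma\cong\C^2\). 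The map on \(H^0\) is an isomorphism because the duals of \(be\) and \(b^2-ac\) take independent values at \(v\). Hence \(R\Gamma(E,N)=0\).
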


\begin{proof}
    We apply \(\cHom_E(M, -)\) to the normalization sequence
    \[0 \to \cO_E \to \cO_A \oplus \cO_B \to \cO_L \to 0.\]
    At the vertex \(v\), surjectivity follows from the independent free generators \(be\) on \(A\) and \(b^2-ac\) on \(B\).

    Equation \eqref{eq:c5-vm} identifies \(M_L^{\mathrm{tf}}\) with the restrictions of the resolved bundles to the strict transforms of \(L\), which are rulings. Thus \Cref{prop:c5-ruling} gives \(M_L^{\mathrm{tf}}\cong\cO_L^{\oplus2}\).

    Thus the exact sequence becomes
    \[0 \to N \to (\cO_A \oplus \cO_A(-1)) \oplus (\cO_B \oplus \cO_B(-1)) \to \cO_L^{\oplus 2} \to 0.\]
    The negative summands \(\cO_A(-1)\) and \(\cO_B(-1)\) are acyclic, so the middle term has only \(H^0=\C^2\). Its map to \(H^0(\cO_L^{\oplus2})=\C^2\) is an isomorphism because the two generators are independent at \(v\). The cohomology sequence gives \(R\Gamma(E,N)=0\).
\end{proof}

\begin{theorem}
    \label{thm:c5}
    In the C5 case, with
    \[E = A \cup_L B,\quad A \cong \PP^2, \quad B \cong \PP(1,1,2),\quad
    \cO_A(H)\cong\cO_A(1),\quad\cO_B(H)\cong\cO_B(2),\]
    the sheaf \(\cO_E\) is exceptional.
\end{theorem}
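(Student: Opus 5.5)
The plan is to apply \Cref{lem:c-exc} directly. That lemma needs four inputs: the embedding is Cohen--Macaulay of codimension two, \(R\Gamma(E,\cO_E)\cong\C\), \(R\Gamma(E,\cG)=0\) with \(\cG=\omega_E\otimes\omega_X^{-1}|_E\), and \(R\Gamma(E,N)=0\).

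The first three inputs come from \Cref{ass:A1}. In the C5 case the local Hilbert--Burch presentation at \(v\), together with regularity of the embedding away from \(v\), shows that \(E\subset X\) is Cohen--Macaulay of codimension two. The polarized surface \((E,H|_E)\) is a degree-three surface in \(\PP^4\) with the resolution stated in \Cref{ass:A1}(ii). One checks the degree as \(H^2\cdot A+H^2\cdot B=1+2=3\). The embedding into \(\PP^4\) is given by \(H^0(\cO_E(H))\), and the global resolution is obtained by globalizing the displayed \(3\times2\) matrix. Under \Cref{ass:A1} one has \(\cG=\omega_E(H)\), and the vanishings \eqref{eq:vanishing} give both \(R\Gamma(E,\cO_E)\cong\C\) and \(R\Gamma(E,\omega_E(H))=0\).

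It then remains to supply \(R\Gamma(E,N)=0\), which is exactly the conclusion of \Cref{lem:c5-van}. That lemma in turn rests on \Cref{prop:c5-ruling}, which excludes the negative splitting types on both components by \Cref{prop:non-neg} once \Cref{ass:A2}--\Cref{ass:A8} are verified. Those verifications have all been carried out above, including the exclusion of the plane negative type through the \(w_0=v_0\) contradiction. With all inputs in place, \Cref{lem:c-exc} gives \(\Ext^{j+1}_X(\cO_E,\cO_E)\cong H^j(E,N)=0\) for \(0\le j\le 2\), together with \(\Hom(\cO_E,\cO_E)=\C\). Hence \(\cO_E\) is exceptional.

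The main obstacle is certifying that \(E\) really falls under \Cref{ass:A1}(ii), that is, that the local Hilbert--Burch description globalizes. To handle it, I will compute \(\chi(\cO_E(t))\) from the normalization sequence \(0\to\cO_E\to\cO_A\oplus\cO_B\to\cO_L\to0\) and match it with the Hilbert polynomial of a cubic scroll in \(\PP^4\). If this globalization proves awkward, the fallback is to verify \(R\Gamma(\omega_E(H))=0\) directly. By duality this is equivalent to \(R\Gamma(\cO_E(-H))=0\), which follows from the normalization sequence because \(\cO_A(-1)\), \(\cO_B(-2)\) and \(\cO_L(-1)\) are all acyclic.
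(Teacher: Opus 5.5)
Your proposal is correct and matches the paper's proof, which applies \Cref{lem:c-exc} with $R\Gamma(E,N)=0$ from \Cref{lem:c5-van}; the remaining hypotheses were already recorded under \Cref{ass:A1} via \eqref{eq:vanishing}. Your fallback, Serre duality on the Cohen--Macaulay surface $E$ together with the normalization sequence (which also gives $R\Gamma(E,\cO_E)\cong\C$), is the more robust way to obtain those hypotheses, since matching Hilbert polynomials alone would not produce the global resolution in \Cref{ass:A1}(ii).
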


\begin{proof}
    This is a direct consequence of \Cref{lem:c5-van,lem:c-exc}.
\end{proof}

\section{Proof of the conjecture in dimension four}
\label{sec:fourfold}

In this section, we treat the fourfold case using the classification
of extremal contractions and the results of
Section~\ref{sec:refinement}. We construct nonzero proper admissible
subcategories for the birational contractions, including those with
singular target, and then complete the argument for contractions
of fiber type.

\begin{lemma}
    Let \(f:X\to Y\) be an elementary contraction of fiber type from a smooth fourfold, with \(\dim Y\leq2\). Then \(Y\) is smooth.
\end{lemma}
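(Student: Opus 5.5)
Since \(Y\) is normal, the plan is to split according to \(\dim Y\) and use only standard facts about the target of a \(K_X\)-negative extremal contraction of fiber type. If \(\dim Y=0\), then \(Y\) is a point and there is nothing to prove. If \(\dim Y=1\), then \(Y\) is a normal curve, and a normal curve is smooth.

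The real case is \(\dim Y=2\). Here \(f\) is a conic-bundle-type contraction: \(f\) is flat over the locus where it is equidimensional, and a general fiber is a smooth surface \(F\) with \(-K_F\) ample, that is, a del Pezzo surface. I would invoke the known structure theory. By \cite[Theorem 4.1]{AW1997} (equivalently, Andreatta--Wi\'sniewski's results on good contractions with \(\dim X-\dim Y=2\) and their slicing argument), the target of an elementary good contraction with relative dimension two onto a surface is smooth provided every fiber is two-dimensional. That proviso is automatic here, since a three-dimensional fiber cannot occur over a surface: the fiber dimension is at most \(\dim X-1\), and the equality case is excluded for a fiber-type contraction onto a surface once the fiber is a divisor.

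To make this concrete I would argue via slicing, in the style of \Cref{lem:aw-slice-vanishing}. Take a general member \(D\in|-K_X|\), which is relatively free by the relative base-point-free theorem. Then \(D\to Y\) is a contraction of relative dimension one from a variety that is smooth near a given fiber. Its fibers are conics, and a conic-bundle target is smooth by the surface/threefold conic bundle theory of Ando and Mori. Any nonnormal or non-smooth point of \(Y\) would persist on this slice. Together with \(Rf_*\cO_X=\cO_Y\) and the rationality of the singularities of \(Y\), this forces \(Y\) to be smooth.

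The main obstacle is isolated fibers of dimension three over a point of the surface \(Y\). They are not excluded by dimension counting alone, so the argument for this case must rely on the cited Andreatta--Wi\'sniewski classification, or on Kachi's result for contractions of fourfolds onto surfaces. I would first try to quote that classification directly, and fall back on the slicing argument only if the citation fails to cover the case.
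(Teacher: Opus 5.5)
Your cases \(\dim Y\le1\) are fine, but the case \(\dim Y=2\) has a genuine gap. On three-dimensional fibers you are inconsistent: you first say they cannot occur, without real justification, and at the end call them the main obstacle and say they are not excluded.

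They are excluded, though by \(\rho(X/Y)=1\) rather than by dimension counting. Suppose a prime divisor \(D\subset X\) had \(f(D)=\{P\}\). Then \(D\cdot C=0\) for a curve \(C\) in a general fiber, so \(D\) is trivial on the whole extremal ray, and \(\cO_X(D)\cong f^*L\) for a line bundle \(L\) on \(Y\) by \cite[Theorem 3.7(4)]{KM1998}. Since \(f_*\cO_X=\cO_Y\), the defining section of \(D\) is \(f^*t\) with \(t\in H^0(Y,L)\). Hence \(D=f^{-1}(\operatorname{div}t)\) set-theoretically. But \(\operatorname{div}t\) is empty or a curve, which contradicts \(f(D)=\{P\}\). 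So \(f\) is equidimensional. This alone does not give smoothness, since equidimensionality yields flatness only where \(Y\) is already regular.

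Your fallback slicing argument does not close the gap.
\begin{enumerate}
\item A general \(D\in|-K_X|\) meets a general fiber, which is a del Pezzo surface, in an anticanonical curve of arithmetic genus one, not a conic.
\item In fact \(K_D=(K_X+D)|_D\sim0\), so \(D\to Y\) is an elliptic fibration. It is neither \(K\)-negative nor elementary, and the conic bundle results of Ando and Mori do not apply to it.
\item The base-point-free theorem gives relative freeness only of multiples of \(-K_X\), not of \(-K_X\) itself.
\item \(Rf_*\cO_X\cong\cO_Y\) and rationality of the singularities of \(Y\) cannot finish the job either. The \(E_8\) surface singularity is rational and even factorial, so these properties do not force smoothness.
\end{enumerate}
The missing ingredient is therefore exactly the Andreatta--Wi\'sniewski smoothness result. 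The paper takes the entire statement from \cite[Proposition 1.2, Corollary 1.4]{AW1997}. Your argument reduces to that same citation only if the result you quote as \cite[Theorem 4.1]{AW1997} really says what you claim. You should cite the precise statement, with its fiber-dimension hypothesis verified as above, rather than prop it up with the slicing argument.
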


\begin{proof}
    The smoothness of \(Y\) follows from \cite[Proposition 1.2, Corollary 1.4]{AW1997}.
\end{proof}

It remains to consider contractions onto a threefold. The equidimensional case has smooth target and is again covered by \Cref{lem:smooth}. The non-equidimensional case, with isolated surface fibers, is covered by the analysis in \Cref{sec:refinement}, based on  \cite[Theorems 0.5--0.8]{Kachi1997}. We first dispose of the birational contractions. We label such a contraction by
\[(\dim\operatorname{Exc}(f),\dim f(\operatorname{Exc}(f))).\]

\subsection{\texorpdfstring{$(2, 0)$}{(2, 0)} case}
By \cite[Theorem 1.1]{Kawamata1989}, if \(f\) is a small extremal contraction from a smooth fourfold, then its exceptional locus is a disjoint union of copies of \(\PP^2\), and for every component \(E\),
\[N_{E/X} \cong \cO_{\PP^2}(-1) \oplus \cO_{\PP^2}(-1).\] In this case \(\cO_E\) is exceptional. Indeed, the Koszul calculation in \Cref{prop:koszul-background} gives
\[\cExt_X^q(\cO_E, \cO_E) \cong \wedge^q N_{E/X}, \quad q = 0,1,2\]
and these sheaves vanish for \(q>2\). The local-to-global spectral sequence is
\[E^{p,q}_2 = H^p(E, \wedge^q N_{E/X}) \Rightarrow \Ext^{p + q}_X(\cO_E, \cO_E)\]
The only nonzero cohomology term is \(H^0(E,\cO_E)=\C\), so \(R\Hom_X(\cO_E,\cO_E)=\C\). By \cite[Theorem 2.3]{AW1997}, we have
\[\dim\operatorname{Exc}(f)+\dim F\geq4+\ell-1\geq4,\]
where \(F\) is a nontrivial fiber component and \(\ell\geq1\) is the minimum anticanonical degree of a rational curve in \(F\). For types \((2,1)\) and \((1,0)\), the left-hand side is respectively \(3\) and \(2\) over a general point of the image of the exceptional locus, so these types are excluded.

\subsection{\texorpdfstring{$(3, 0)$}{(3, 0)} case}

This follows from \Cref{lem:contra-point}.

\subsection{\texorpdfstring{$(3, 1)$}{(3, 1)} case}

By \cite[Main Theorem and Theorem 1.1]{Takagi1999}, the image of the exceptional divisor is a smooth curve \(C\), and the induced morphism \(g:E\to C\) is a \(\PP^2\)-bundle or a quadric bundle without multiple fibers. We have a commutative diagram

\[\begin{tikzcd}
	E & X \\
	C & Y
	\arrow["i", from=1-1, to=1-2]
	\arrow["g"', from=1-1, to=2-1]
	\arrow["f", from=1-2, to=2-2]
	\arrow["j"', from=2-1, to=2-2]
\end{tikzcd}\]

\begin{lemma}
    \(Ri_*Lg^*\Db(C)\) is an admissible subcategory of \(\Db(X)\).
\end{lemma}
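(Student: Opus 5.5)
The plan is to show that the functor \(\Phi=Ri_*\circ Lg^*:\Db(C)\to\Db(X)\) is fully faithful. Once that holds, admissibility is automatic: \(\Phi\) has right adjoint \(Rg_*\circ i^!\) and left adjoint \(Rg_*(Li^*(-)\otimes\omega_g)[\dim E-\dim C]\) by Grothendieck duality. Because \(X\), \(E\) and \(C\) are projective and \(i\) is a regular embedding of a Cartier divisor, these adjoints preserve bounded coherent complexes. The essential image of a fully faithful functor with both adjoints is admissible.

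For full faithfulness, compute \(R\Hom_X(Ri_*Lg^*F,Ri_*Lg^*G)\) via the adjunction \(Li^*\dashv Ri_*\). For a Cartier divisor there is a triangle
\[
Lg^*F\otimes\cO_E(-E)[1]\to Li^*Ri_*Lg^*F\to Lg^*F,
\]
using \(\cO_E(-E)=N^\vee_{E/X}\). Applying \(R\Hom_E(-,Lg^*G)\) and then \(Lg^*\dashv Rg_*\) together with the projection formula, full faithfulness reduces to two vanishing statements:
\[
Rg_*\cO_E\cong\cO_C,\qquad Rg_*\cO_E(E)=0.
\]
The first gives \(R\Hom_E(Lg^*F,Lg^*G)\cong R\Hom_C(F,G)\). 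The second kills the extra term \(R\Hom_E(Lg^*F\otimes\cO_E(-E)[1],Lg^*G)\cong R\Hom_C(F,G\otimes Rg_*\cO_E(E))[-1]\).

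Both vanishings are checked fiberwise, using flatness of \(g\) and cohomology and base change. The map \(g\) is flat because \(E\) is Cohen--Macaulay (a Cartier divisor in smooth \(X\)), \(C\) is a smooth curve, and all fibers are two-dimensional. A fiber \(G_c\) is either \(\PP^2\) or a quadric surface in \(\PP^3\), reduced by Takagi's classification since there are no multiple fibers. For the first vanishing, any such fiber satisfies \(H^q(G_c,\cO)=0\) for \(q>0\) and \(H^0=\C\), so \(Rg_*\cO_E\cong\cO_C\) because the fibers are connected and \(C\) is normal. For the second, we need the degree of \(\cO_E(E)\) on fibers. Since \(-E\) is \(f\)-ample and \(-K_X\) is \(f\)-ample, comparing with adjunction \(\omega_{G_c}=(K_X+E)|_{G_c}\) shows that \(\cO_E(E)|_{G_c}=\cO(-1)\) in both cases. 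For \(\PP^2\) this follows from \(K_X\cdot l=-2\) or \(-1\) and \(\omega_{\PP^2}=\cO(-3)\); for the quadric, \(\omega_Q=\cO(-2)\). Equivalently, one can invoke the blow-up description of Takagi, where the length of a contracted line has \(E\cdot l=-1\). Then \(H^q(G_c,\cO(-1))=0\) for all \(q\), on the plane and on any quadric (smooth, cone, or a reduced pair of planes), by the sequence \(0\to\cO_{\PP^3}(-3)\to\cO_{\PP^3}(-1)\to\cO_Q(-1)\to0\). Base change then gives \(Rg_*\cO_E(E)=0\).

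The main obstacle I expect is pinning down \(\cO_E(E)|_{G_c}\cong\cO(-1)\) uniformly, including on singular quadric fibers. Here I would rely on Takagi's explicit description and on \(\rho(X/Y)=1\): it forces \(E\) and \(K_X\) to be proportional on contracted curves, and the constant is fixed by adjunction on a general fiber. By flatness, the line bundle \(\cO_E(E)\) has the same degree on all fibers. Since the fibers are reduced hyperplane-section surfaces with Picard degree determined by the polarization, the identification extends to every fiber.
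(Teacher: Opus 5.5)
Your argument is essentially the paper's: you prove full faithfulness of \(Ri_*Lg^*\) via the Cartier self-intersection triangle, reducing it to \(Rg_*\cO_E\cong\cO_C\) and \(Rg_*\cO_E(E)=0\), which you check fiberwise by adjunction on Takagi's fibers plus base change, and then admissibility follows from the duality adjoints. One slip: on a plane fiber with \(K_X\cdot l=-1\), adjunction gives \(\cO_E(E)|_{G_c}\cong\cO_{\PP^2}(-2)\), not \(\cO(-1)\) (the paper lists \(\cO(-2)\) or \(\cO(-1)\)), so your ``uniform \(\cO(-1)\)'' claim is false in that case, but \(\cO_{\PP^2}(-2)\) is also acyclic and the vanishing \(Rg_*\cO_E(E)=0\) goes through unchanged.
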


\begin{proof}
    By \cite[Main Theorem]{Takagi1999}, \(g:E\to C\) is either a \(\PP^2\)-bundle or a quadric bundle. In both cases, we have
    \(Rg_*\cO_E\cong\cO_C.\)
    Also, we have
    \(Rg_*\cO_E(E) = 0\),
    which follows from the adjunction formula
    \(K_F = (K_X + E)|_F\)
    and the polarized fibers listed in \cite[Theorem 1.1]{Takagi1999}. Indeed, on a plane fiber, \(\cO_E(E)\) restricts to \(\cO_{\PP^2}(-2)\) or \(\cO_{\PP^2}(-1)\), while on a quadric fiber \(Q\) it restricts to \(\cO_Q(-1)\). These line bundles are acyclic. For any quadric, this follows from
    \[0\to\cO_{\PP^3}(-3)\to\cO_{\PP^3}(-1)\to\cO_Q(-1)\to0.\]
    The description of bundles by \cite{Takagi1999} extends these calculations to every fiber, so cohomology and base change gives the asserted vanishing.

    Let \(\Phi\coloneq Ri_*Lg^*\). For \(F,G\in \Db(C)\), the Cartier self-intersection filtration has associated terms
    \[R\Hom(Lg^*F,Lg^*G),\qquad R\Hom(Lg^*F,Lg^*G\otimes\cO_E(E))[-1].\]
    The second term vanishes, while the projection formula identifies the first with \(R\Hom(F,G)\).
    Therefore \(\Phi\) is fully faithful. Since \(C\) and \(X\) are smooth and projective, the functor has both adjoints and its image is admissible.
\end{proof}

\subsection{\texorpdfstring{$(3,2)$}{(3,2)} case}
We now consider the \((3,2)\)-case. Andreatta and Wi\'sniewski describe its isolated two-dimensional fibers in \cite{AW1998}. The equidimensional case and the divisorial models with smooth target are covered by \Cref{lem:smooth}. At a singular target point, the special fiber is \(F=\PP^2\) with
\[N^\vee_{F/X}=\bigl(T_{\PP^2}(-1)\oplus\cO_{\PP^2}(1)\bigr)/\cO_{\PP^2},\]
and its image is an ordinary double point of \(Y\). We construct the required admissible subcategory near each such point and then use descent for semiorthogonal decompositions to glue the local subcategories. This also allows several singular points of this type to occur on the same target.

Let \(F\) be such a special fiber. After shrinking \(Y\), assume that it is the only special fiber in the chosen neighborhood. Let \(P=f(F)\) be the ordinary double point. In this case \(S=f(E)\) is smooth. Consider
\[\begin{tikzcd}
	E & X \\
	S & Y
	\arrow["i", from=1-1, to=1-2]
	\arrow["g"', from=1-1, to=2-1]
	\arrow["f", from=1-2, to=2-2]
	\arrow["j"', from=2-1, to=2-2]
\end{tikzcd}\]

\begin{lemma}
The image of \(\Phi=Ri_*(Lg^*(-)\otimes\cO_E(E))\) is an admissible subcategory of \(\Db(X)\).
\end{lemma}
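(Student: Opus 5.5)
The strategy follows the $(3,1)$ case. I would first show that $\Phi=Ri_*(Lg^*(-)\otimes\cO_E(E))$ is fully faithful on $\Db(S)$. Admissibility then follows because $S$ and $X$ are smooth and projective. Indeed, $\Phi$ is a Fourier--Mukai functor with kernel supported on the graph-type subscheme $E\subset S\times X$. Its right adjoint is $Lg^*$-adjoint composed with $Li^*$, namely $Rg_*(Li^*(-)\otimes\cO_E(-E))$ up to the appropriate dualizing twist. Its left adjoint comes from Serre duality on $X$ and $S$. Hence it suffices to prove
\[
R\Hom_X(\Phi F,\Phi G)\cong R\Hom_S(F,G).
\]
The image is nonzero, and it is proper because, for instance, $Lf^*$-type objects such as $\cO_X$ are not in it. Note that $\Phi(\cO_S)$ is supported on $E$.

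For full faithfulness I would use the Cartier self-intersection triangle for the divisor $E$:
\[
Li^*Ri_*\cG\to\cG\quad\text{with cone}\quad\cG\otimes\cO_E(-E)[2].
\]
This gives $R\Hom_X(\Phi F,\Phi G)$ as an extension of two terms. The first is $R\Hom_E(Lg^*F,Lg^*G)$, the twists by $\cO_E(E)$ cancelling. The second is $R\Hom_E(Lg^*F,Lg^*G\otimes\cO_E(-E))[-1]$. By adjunction and the projection formula for the morphism $g$, both reduce to $Rg_*\cO_E$ and $Rg_*\cO_E(-E)$. So I need to show
\[
Rg_*\cO_E\cong\cO_S,\qquad Rg_*\cO_E(-E)=0.
\]
For the first, $g$ has connected fibers. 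The general fibers are conics, hence rational, and the special fiber $F\cong\PP^2$ has $R\Gamma(\cO_F)=\C$. Since $E$ is a normal Cohen--Macaulay divisor and $-K_X$ is relatively ample, relative Kawamata--Viehweg vanishing (or Grauert--Riemenschneider applied to $E\to S$) gives $R^qg_*\cO_E=0$ for $q>0$ and $g_*\cO_E=\cO_S$.

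For the vanishing of $Rg_*\cO_E(-E)$, I would argue fiberwise. The divisor $-E$ is $f$-ample, by the negativity lemma applied to the elementary contraction. On a general fiber, a smooth conic $\ell$ with $E\cdot\ell=-2$ (or a line pair, each with $E\cdot\ell=-1$), the restriction of $\cO_E(-E)$ has degree $2$ and is positive. That gives $H^1=0$ but not $H^0=0$, so this would fail. The sign must therefore be fixed by the correct convention. With $\Phi$ twisting by $\cO_E(E)$, the relevant term is actually $Rg_*\cO_E(E)$, as in the $(3,1)$ case, up to the orientation of the triangle. I would use that version: $\cO_E(E)$ has degree $-2$ on a conic fiber and degree $-1$ on the lines of a line pair, and these are acyclic.

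On the special fiber $F\cong\PP^2$, the given conormal bundle $N^\vee_{F/X}$ has $c_1=\cO(1)$. Computing $E|_F$ through $N_{F/E}$ and $K_F=(K_X+E)|_F-\ldots$, I expect $\cO_F(E)\cong\cO_{\PP^2}(-1)$, which is acyclic. Since $g$ is flat over $S$ (equidimensional fibers and a Cohen--Macaulay source over a smooth base), cohomology and base change then gives $Rg_*\cO_E(E)=0$.

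The main obstacle is the behaviour at the special fiber. First, the fibers of $g$ jump in dimension there: over $P$, $g^{-1}(P)$ contains the whole surface $F$, while the general fibers are curves. Flatness of $g$ therefore fails, and base change is not available. So I would instead prove the vanishing with the theorem on formal functions along $F$. This means computing $H^q$ on the thickenings of $F$ inside $E$, using the filtration by $\Sym^k$ of the conormal bundle of $F$ in $E$, twisted by $\cO_F(E)$. The conormal description $N^\vee_{F/X}$ should make these graded pieces negative, hence acyclic. Second, whether $E$ is normal near $F$ must be checked. If this local analysis fails, the fallback is the paper's announced route: build a local categorical resolution near $P$ and glue via descent \cite{Kuz2008,AE2021}.
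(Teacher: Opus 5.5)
You correctly reduce full faithfulness, via the Cartier self-intersection triangle and after your own sign correction, to \(Rg_*\cO_E\cong\cO_S\) and \(Rg_*\cO_E(E)=0\). This is the same reduction the paper makes. The gap is the second vanishing at the special fiber. You rightly note that base change fails there, but your replacement is only a hope, and it rests on a false picture of \(F\subset E\).

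Since \(H^0(N^\vee_{F/X}(-1))=\C\), the map \(\cO_F(-E)\cong\cO_F(1)\to N^\vee_{F/X}=(T_{\PP^2}(-1)\oplus\cO_{\PP^2}(1))/\cO_{\PP^2}\) is, up to scalar, the inclusion of the \(\cO(1)\)-summand. (If it were zero, \(E\) would be singular along all of \(F\).) This inclusion drops rank exactly at the point \(x_0\) where the \(T_{\PP^2}(-1)\)-component of the section \(\cO\to T_{\PP^2}(-1)\oplus\cO(1)\) vanishes. Hence \(E\) is singular at \(x_0\in F\). For example, blowing up a fourfold node along a plane through it, \(E\) acquires a threefold node there. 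Consequently \(F\) is not Cartier in \(E\), and \(\cI_{F/E}/\cI_{F/E}^2\cong\cI_{x_0/F}(1)\) is not a line bundle.

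So there is no \(\Sym^k\)-filtration by a conormal line bundle. After twisting by \(\cO_F(E)\cong\cO_F(-1)\), the graded pieces in that model are \(\cI_{x_0/F}^k(k-1)\), which are not negative. They happen to be acyclic. However, showing that the graded pieces have this form, torsion-free at \(x_0\), needs local information about \(E\) that you have not derived. Your fallback is circular, since the local categorical resolution \(\cD={}^\perp\im\Phi\) is built from this very lemma.

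The missing idea is to compute on \(X\) rather than fiber by fiber, exactly as in \Cref{lem:contra-point}.
\begin{itemize}
\item Since \(Y\) is Gorenstein and \(E\) has discrepancy \(1\), one has \(\omega_X\cong f^*\omega_Y\otimes\cO_X(E)\).
\item Grauert--Riemenschneider vanishing and rationality of the node give \(Rf_*\omega_X\cong\omega_Y\), so the projection formula yields \(Rf_*\cO_X(E)\cong\cO_Y\).
\item Apply \(Rf_*\) to \(0\to\cO_X\to\cO_X(E)\to\cO_E(E)\to0\) and use \(Rf_*\cO_X\cong\cO_Y\). This gives \(Rj_*Rg_*\cO_E(E)=0\), hence \(Rg_*\cO_E(E)=0\), with no analysis of the special fiber.
\end{itemize}
Likewise, \(Rg_*\cO_E\cong\cO_S\) follows from the sequence \(0\to\cO_X(-E)\to\cO_X\to\cO_E\to0\), relative Kawamata--Viehweg vanishing (as \(-E-K_X\) is \(f\)-ample), and normality of \(S\). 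No normality of \(E\) is needed.

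Two smaller corrections:
\begin{itemize}
\item \(c_1(N^\vee_{F/X})=2\), not \(1\). The clean way to get \(\cO_F(E)\cong\cO_F(-1)\) is to restrict \(K_X=f^*K_Y+E\) to \(F\).
\item In this lemma \(Y\) has been shrunk, so \(X\) and \(S\) are not projective. The adjoints should come from Grothendieck duality for the proper maps \(i\) and \(g\), with \(g^!\cO_S\) a shifted line bundle because \(E\) is Gorenstein, rather than from Serre duality.
\end{itemize}
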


\begin{proof}
The geometric model gives \(Rg_*\cO_E\cong\cO_S\). Moreover,
\[\omega_X\cong f^*\omega_Y\otimes\cO_X(E).\]
The target is Gorenstein and has rational singularities. Hence Grauert--Riemenschneider vanishing applies. Grothendieck duality and the projection formula then give
\[Rf_*\cO_X(E)\cong\cO_Y.\]
Together with \(Rf_*\cO_X\cong\cO_Y\), the exact sequence
\[0 \to \cO_X \to \cO_X(E) \to \cO_E(E) \to 0\]
gives \(Rg_*\cO_E(E)=0\). The Cartier self-intersection filtration and the projection formula now show that \(\Phi\) is fully faithful. Since \(S\) and \(X\) are smooth, \(i\) is a Cartier immersion, and \(g\) is proper, the adjoints constructed by duality preserve perfect complexes; hence the image is admissible.
\end{proof}

Let \(\cA=\im\Phi\) and \(\cD={}^\perp\cA\). Since \(Rf_*\Phi=0\), the usual pullback--pushforward adjunction shows that \(Lf^*\Perf(Y)\subseteq\cD\). For \(G\in\Perf(Y)\), we have \(f^!G\cong Lf^*G\otimes\cO_X(E)\). Tensoring the triangle
\[\cO_X \to \cO_X(E) \to \cO_E(E) \to,\]
we obtain a triangle
\[Lf^*G \to f^!G \to \Phi(Lj^*G)\]
whose last term belongs to \(\cA\). For \(\cF\in\cD\), applying \(R\Hom_X(\cF,-)\) to this triangle and using Grothendieck duality gives the second of the following adjunction identities \(R\Hom_X(\cF,Lf^*G)\cong R\Hom_Y(Rf_*\cF,G)\).
Here the identities are asserted for \(G\in\Perf(Y)\). Moreover, \(Rf_*Lf^*G\cong G\) by the projection formula. These show that \(\cD\) is a weakly crepant local categorical resolution \cite{Kuz2008}.

The local decomposition is also linear over \(\Perf(Y)\). Indeed, for \(G\in\Perf(Y)\) and \(F_S\in\Perf(S)\), the projection formula gives
\[Lf^*G\otimes\Phi(F_S)\cong\Phi(Lj^*G\otimes F_S).\]
Thus \(\cA\) is stable under tensoring by pullbacks of perfect complexes on \(Y\). Since such complexes are dualizable, \(\cD={}^\perp\cA\) is stable under the same action, and the adjoints to the inclusions are linear over \(\Perf(Y)\). Consequently, \(\Perf(X)=\langle\cA,\cD\rangle\) is a \(\Perf(Y)\)-linear semiorthogonal decomposition.

Choose an affine cover \(\{Y_i\}_{i\in I}\) such that each \(Y_i\) is smooth or is a neighborhood of a single ordinary double point as above, containing no other two-dimensional fiber. Arrange that overlaps between distinct charts lie in the smooth locus. Write \(X_i=f^{-1}(Y_i)\) and \(f_i=f|_{X_i}\). On a singular chart put \(S_i=f_i(E\cap X_i)\), denote the corresponding functor above by \(\Phi_i\), and take \(\cD_i={}^\perp\im\Phi_i\). On a smooth chart take \(\cD_i=Lf_i^*\Perf(Y_i)\). Both preserve perfect complexes because \(f_i\) is proper and \(Y_i\) is smooth. Both choices therefore give \(\Perf(Y_i)\)-linear admissible decompositions of \(\Perf(X_i)\).

On an overlap \(U=Y_i\cap Y_j\) with \(Y_i\) a singular chart, every nontrivial fiber of \(f_U:X_U=f^{-1}(U)\to U\) is one-dimensional. Thus \(f_U\) is the blow-up of the smooth center \(S_U=S_i\cap U\). Orlov's formula \cite{Orlov1992} gives
\[\Perf(X_U)=\big\langle\Phi_U\Perf(S_U),Lf_U^*\Perf(U)\big\rangle,\]
where \(\Phi_U\) is the restriction of \(\Phi_i\). Hence the component from a singular chart restricts to \(Lf_U^*\Perf(U)\). On overlaps of smooth charts, the pullback descriptions agree directly. These equalities of full subcategories of \(\Perf(X_U)\) respect further restriction and hence satisfy the cocycle condition.

Applying \cite[Corollary 3.17]{AE2021} to the natural enhancements of these compatible linear decompositions produces an admissible subcategory \(\cD\subset\Perf(X)=\Db(X)\) with the specified local restrictions. It contains \(\cO_X\), while its right orthogonal restricts on any singular chart to the nonzero category \(\Phi_i\Perf(S_i)\). Thus both components are nonzero, and the decomposition is nontrivial.

\subsection*{Completion of the proof}

We can now complete the fourfold argument. If the contraction is birational, the cases treated at the beginning of this section give a nonzero proper admissible subcategory. Suppose it is of fiber type. The cases with smooth target are covered by \Cref{lem:smooth}. A B0 fiber carries an exceptional plane, while a C3 or C5 fiber carries the exceptional object constructed above. If none of these fibers occurs, the classification leaves only the cases already known to have smooth target and the C2 fibers, and the C2 theorem proves smoothness at every remaining special point. Thus the target is smooth everywhere, and \Cref{lem:smooth} applies. This proves the Kawatani--Okawa conjecture in dimension four.

\begin{appendices}
\appendix
\section{An example of a C5 fiber}
\label{app:c5}

Neither \cite{Kachi1997} nor \cite{AW1998} gives an explicit construction of a C5 fiber. We record one here.

Put \(Y=\bbA_{u,v,w}^3\), \(P=\PP^4_{[a:b:c:d:e]}\times Y\), and \(Q=c^2+e^2\).
Let
  \[M =
    \begin{pmatrix}
      0 & Q & a & b & ue \\
      -Q & 0 & b & c & d \\
      -a & -b & 0 & u & v \\
      -b & -c & -u & 0 & w \\
      -ue & -d & -v & -w & 0 \\
    \end{pmatrix},
  \]
and \(X = V(\mathrm{Pf}_{4}(M)) \subset P\), whose ideal is generated by five principal \(4 \times 4\) Pfaffians, \(l_{1}, -l_{2}, q_{1}, -q_{2}, q_{3}\), where
\[l_1 = du + bw - cv,\]
\[l_2 = aw - bv + u^2e,\]
\[q_1 = wQ - bd + uec,\]
\[q_2 = vQ - ad + ueb,\]
\[q_3 = uQ - ac + b^2.\]

Let \(f: X \to Y\) be the projection. Then
\begin{proposition}
    The variety \(X\) is a smooth irreducible fourfold, and \(f\) is a projective \(K_X\)-negative elementary contraction with
    \[\rho(X/Y) = 1, -K_X \cong \cO_X(1).\]
    Its fiber over the origin is the reduced union
    \[f^{-1}(0) = \PP^2 \cup_L S_2\]
    of type C5. Every other fiber is a plane conic and the general fiber is smooth.
\end{proposition}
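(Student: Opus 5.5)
The proof splits into four parts: compute the Pfaffian equations fiberwise, verify smoothness and dimension by the Jacobian criterion, obtain the canonical class from the Buchsbaum--Eisenbud resolution, and prove \(\rho(X/Y)=1\) by a monodromy argument on reducible conics.

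\emph{Grading and canonical class.} I would view \(M\) as a skew map \(\mathcal F^\vee(-t)\to\mathcal F\) of graded bundles on \(P=\PP^4\times Y\), with
\[
\mathcal F=\cO(1)^{\oplus2}\oplus\cO^{\oplus3}.
\]
The first two rows and columns have entries of degree \(2\) (the entry \(Q\)) or \(1\) (the entries \(a,b,c,d,ue\)). The lower \(3\times3\) block has entries \(u,v,w\) of degree \(0\). With this grading the five Pfaffians \(l_1,l_2,q_1,q_2,q_3\) are homogeneous of degrees \(1,1,2,2,2\), matching the display.

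\emph{Fiberwise equations and dimension.} Over the origin, \(l_1=l_2=0\) identically, and the remaining Pfaffians reduce to
\[
q_1=-bd,\qquad q_2=-ad,\qquad q_3=b^2-ac .
\]
The ideal \((bd,ad,b^2-ac)\) is the intersection of \((a,b)\) and \((d,b^2-ac)\). Hence \(f^{-1}(0)\) is the reduced union of the plane \(\{a=b=0\}\cong\PP^2\) and the quadric cone \(\{d=0,\ b^2=ac\}\cong S_2\). They meet in the line \(a=b=d=0\), which passes through the vertex \([0:0:0:0:1]\) of the cone; this is exactly the C5 configuration. The degree of \(H\) on each component follows from \(\cO(1)\).

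For \(y\neq0\), the linear forms \(l_1,l_2\) are independent. I would check this case by case: if \(u\neq0\), the coefficients of \(d\) and \(e\) already give independence; if \(u=0\), one needs \((v,w)\neq0\). So they cut out a plane, and the three quadrics restrict there to define a conic. This I would verify by solving \(l_1=l_2=0\) for two coordinates and observing that the \(q_i\) become proportional modulo \(l_1,l_2\). In particular every fiber has dimension at most two, and dimension two only over \(0\). Hence \(X\) has pure codimension three. By Buchsbaum--Eisenbud it is Gorenstein, with a self-dual resolution
\[
0\to\cO(-s)\to\textstyle\bigoplus\cO(\ast)\to\bigoplus\cO(\ast)\to\cO\to\cO_X\to0 .
\]
Reading off the last twist \(s\) and using \(\omega_{P}=\cO(-5)\), I expect \(\omega_X\cong\cO_X(-5+s)=\cO_X(-1)\). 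I would confirm this against the fiber: general fibers are conics in a plane, and adjunction requires \(-K_X\cdot(\text{line})=1\).

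\emph{Smoothness, which I expect to be the main obstacle.} I would apply the Jacobian criterion in two regions. Away from \(f^{-1}(0)\), I would argue generically. Along \(f^{-1}(0)\), where the \(u,v,w\)-derivatives carry the information, I would compute explicitly, chart by chart, checking that the \(5\times7\) Jacobian has rank \(3\). The delicate points are the vertex \(e=1\) and the common line \(L\). There, the partials \(\partial_u,\partial_v,\partial_w\) of \(l_1,l_2\) and of \(q_1,q_2,q_3\) must supply the missing rank, and the term \(ue\) is designed precisely for this. The choice \(Q=c^2+e^2\) should make the discriminant curve of the conics reduced, so that no further singularities appear over it. Irreducibility then follows from smoothness together with connectedness of fibers over the connected base \(Y\). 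A smooth irreducible general fiber can be obtained at a point such as \(u=1\), \(v=w=0\).

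\emph{Elementary contraction.} Since \(-K_X=\cO_X(1)\) is relatively ample, \(f\) is a \(K_X\)-negative contraction; \(f_*\cO_X=\cO_Y\) follows from the resolution. For \(\rho(X/Y)=1\), every relative curve class is a combination of lines in reducible conics and lines on the two components of the C5 fiber. I would show that the discriminant surface \(\Delta\subset Y\) is irreducible, and that monodromy around \(\Delta\) exchanges the two lines of a reducible conic. This makes those two lines numerically equivalent, each equal to half a conic. The lines of \(A\) and the rulings of \(B\) are limits of such lines, so they lie in the same ray. Hence \(N_1(X/Y)\) is one-dimensional.
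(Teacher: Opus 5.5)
Your plan has a genuine gap in the smoothness step away from the central fiber. You check the Jacobian explicitly only along \(f^{-1}(0)\), and over \(Y\setminus\{0\}\) you propose to ``argue generically.'' A generic argument only covers points where the fiber is a smooth conic. It says nothing about the vertices of the singular conics over the discriminant, which is a surface \(\Delta\subset Y\), not a curve. Your remedy, that \(Q=c^2+e^2\) makes the discriminant reduced, is not the right criterion. For example, \(x_0^2+x_1^2+(s^2-t^2)x_2^2=0\) over \(\bbA^3_{s,t,r}\) has reduced discriminant, but its total space is singular at the vertices over \(s=t=0\). What you actually need is that every degenerate fiber over \(Y\setminus\{0\}\) has rank exactly two and that \(\Delta\) is smooth at its image. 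With rank two, the total space is smooth at the vertex exactly when the determinant has nonzero differential there. Your irreducibility argument (smooth plus connected) goes through smoothness, so it inherits this gap.

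The paper avoids the issue with the \(\C^*\)-action \(t\cdot([a:b:c:d:e],u,v,w)=([t^2a:tb:c:td:e],t^2u,t^3v,t^2w)\). All five Pfaffians are weighted homogeneous for it, since \(Q\) has weight \(0\). Because \(u,v,w\) have positive weights, every point of \(X\) has a limit in \(f^{-1}(0)\) as \(t\to0\). The singular locus is closed and invariant, so it suffices to check smoothness on the fixed locus \(L\cup\{p_a,p_d\}\), which is a short computation. The same degeneration moves any contracted curve into \(E\). There every curve is numerically a multiple of \([L]\), since \(L\) is both a line of \(A\) and a ruling of \(B\). This gives \(\rho(X/Y)=1\) without any monodromy.

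Your discriminant route can also be completed, but only with explicit computation. For \(u\neq0\), eliminating \(d,e\) gives the conic \(w^2a^2+(v^2+u^3)b^2+u^4c^2-2vw\,ab-u^3ac\), whose symmetric matrix has determinant \(\tfrac{u^6}{4}(4uw^2-v^2-u^3)\). The chart \(w\neq0\) gives \(\tfrac{w^2}{4}(4uw^2-v^2-u^3)\). Hence \(\Delta=V(v^2-u(4w^2-u^2))\). This surface is irreducible and smooth away from \(0\), and all degenerate fibers over it there have rank two. The two lines of a fiber split only over \(\C(\Delta)(\sqrt{-u})\), which is a nontrivial extension. These are exactly the facts your smoothness and monodromy steps assert without proof. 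Note also that the relevant monodromy is that of the double cover of lines over \(\Delta\), not monodromy around \(\Delta\).
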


\begin{proof}
  We first compute is of type C5. When \(u = v = w = 0\), the five equations reduces to
  \[ad = bd = ac - b^{2} = 0.\]
  This ideal decomposes as \((ad,bd,ac-b^{2})=(a,b)\cap(d,ac-b^{2})\). Therefore \(f^{-1}(0) = A \cup B\), where \(A = \PP^{2}\) and \(B = V(d, ac - b^{2}) \cong \PP(1, 1, 2)\). And the intersection is \(L = V(a,b, d) \cong \PP^{1}\).
  The general fibers are plane conic. For example, for \(u \neq 0\), eliminate \(d, e\), the remaining equation is
  \[
    G_{u} = u^{4}c^{2} + (bv - aw)^{2} - u^{3}ac + u^{3}b^{2} = 0.
  \]
  If \(w \neq 0\), then eliminate \(a, b\), we have
  \[
    G_{w} = w^{2}(c^{2} + e^{2}) - cvd + ud^{2} + uwce = 0.
  \]
  And for \(u = w = 0\), then \(v \neq 0\) and \(b = c = 0\). The equation is
  \[ve^{2} - ad = 0.\]

  Now we prove that \(X\) is smooth. We have the following \(\C^{*}\)-action on \(X\),
  \begin{equation}
    \label{eq:c-action}
    t \cdot ([a:b:c:d:e], u,v,w) = ([t^{2}a:tb:c:td:e], t^{2}u, t^{3}v, t^{2}w).
  \end{equation}
  We claim that it suffices to prove the smoothness at the fixed points of the action. The fixed locus is
  \[F = L \cup \{p_{a}, p_{d}\},\]
  where \(p_{a} = [1:0:0:0:0]\), \(p_{d} = [0:0:0:1:0]\) over the origin. Every point \(x \in X\) has a limit in \(F\) when \(t \to 0\). Since the singular locus is closed and invariant under \(\C^{*}\)-action. If \(X\) has a singular point, it must have a singular point in \(F\), which complete the proof of our claim.

  It remains to prove smoothness on \(F\). Let \(p = ([0:0:c:0:e], 0) \in L\). If \(Q(p)\neq0\), then
  \[I_{X} = (q_{1}, q_{2}, q_{3})\]
  near \(p\), and the Jacobian with respect to the base coordinates \((u,v,w)\) has determinant \(-Q(p)^3\neq0\).
  If \(Q(p)=0\), then \(c\neq0\). On the affine chart \(c=1\), these are the two points with \(e=\pm\ii\). Since \(c\) is a unit, the local ideal is \(I_X=(l_1,q_1,q_3)\), and
  \[\left.\frac{\partial(l_1,q_1,q_3)}{\partial(v,u,a)}\right|_p
    =\begin{pmatrix}-1&0&0\\0&e&0\\0&0&-1\end{pmatrix}.\]
  Its determinant is \(e=\pm\ii\neq0\). Thus \(X\) is smooth at every point of \(L\). Similar computations prove smoothness at \(p_a\) and \(p_d\).

  Since the generic fiber is smooth conic, \(X\) has exact one irreducible component dominating \(Y\). Since \(X\) is smooth of pure dimension four and every component mapping into a proper closed subset of \(Y\) has dimension at most three. So \(X\) is irreducible.

  We now prove that \(-K_{X}\) is \(f\)-ample. By above computation, we have \(\dim X = 4\) and \(\mathrm{codim}_{P}X = 3\). Let \(a,b,c,d,e\) hae degree \(1\) and \(u, v, w\) have degree zero. The five Pfaffians have degree \((1, 1, 2, 2, 2)\). The Buchsbaum-Eisenbud resolutions gives
  \[0 \to \cO_{P}(-4) \to \cO_{P}(-3)^{\oplus 2} \oplus \cO_{P}(-2)^{\oplus 3} \to \cO_{P}(-1)^{\oplus 2} \oplus \cO_{P}(-2)^{\oplus 3} \to \cO_{P} \to \cO_{X} \to 0,\]
  where \(\cO_{P}(t)\) denote the pullback of \(\cO_{\cP^{4}}(t)\). Since \(\omega_{p} \cong \cO_{P}(-5)\), the duality gives
  \[\omega_{X} \cong \mathcal{E}\mathrm{xt}^{3}_{P}(\cO_{X}, \omega_{P}) \cong \cO_{X}(-1).\]
  Thus \(-K_{X} \cong \cO_{X}(1)\). And the restriction of polarization satisfies
  \[\cO_{X}(1)|_{A} \cong \cO_{\PP^{2}}(1),\]
  \[\cO_{X}(1)|_{B} \cong \cO_{\PP(1,1,2)}(2).\]
  Therefore the special fiber is of type C5.

  Finally, we prove that \(\rho(X/Y) = 1\). Write \(H = \cO_{X}(1)\). We claim that every Cartier divisor \(D\) on \(X\) is numerically proportional to \(H\) on curves contracted by \(f\). Recall \(f^{-1}(0) = E = A \cup_{L} B\). \(\Pic_{A} = \Z[H|_{A}]\) and \(\Pic(B) = \Z[H|_{B}]\). Hence, \(\Pic_{E} = \Z [H|_{E}]\). Consequently, for every curve \(C \subset E\), we have
  \[D \cdot C = mH \cdot C,\]
  for some \(m \in \Z\). Now let \(C \subset X\) be any irreducible curve contracted by \(f\) not at the origin. Consider the \(\C^{*}\) action \eqref{eq:c-action} again. Its orbit \(t \cdot C\) defines a family of curves in \(\PP^{4}\) with constant Hilbert polynomial. By properness of the Hilbert scheme \cite[\href{https://stacks.math.columbia.edu/tag/0DPH}{Lemma 0DPH}]{stacks-project}, this family has a flat limit \(C_{0}\) as \(t \to 0\). The limit is supported on \(E = f^{-1}(0)\). As intersection number are constant in this family, we have
  \[D \cdot C = D \cdot C_{0},\quad H \cdot C = H \cdot C_{0}.\]
  But \(D|_{E}\) and \(mH|_{E}\) are numerically equivalent. Therefore, \(D \cdot C = mH \cdot C\). So we have \(\rho(X/Y) = 1\).
\end{proof}

\end{appendices}

\bibliographystyle{alpha}
\bibliography{sample}

\end{document}